\newtheorem{theorem}{Theorem}
\newtheorem*{theorem*}{Theorem}
\newtheorem{lemma}{Lemma}
\newtheorem*{lemma*}{Lemma}
\newtheorem{corollary}{Corollary}
\newtheorem*{corollary*}{Corollary}
\newtheorem*{example*}{Example}
\newtheorem{proposition}{Proposition}
\newtheorem*{proposition*}{Proposition}
\theoremstyle{definition}
\newtheorem{definition}{Definition}
\newtheorem*{definition*}{Definition}
\newtheorem{remark}{Remark}
\newtheorem*{remark*}{Remark}
\def\MAGNIFICATION{1.1}
\def\POINTSIZE{4 * \MAGNIFICATION}
\def\LINEWIDTH{1 * \MAGNIFICATION}
\def\INDEX_SIZE{\footnotesize }
\def\DOT_DISTANCE{0.8 * \MAGNIFICATION}
\def\LINE_DISTANCE{0.5 * \MAGNIFICATION}
\def\BRANCH_WIDTH{1 * \MAGNIFICATION}
\def\SECOND_EDGE_PROPOTION{0.2}
\def\BRACEHSPACE{0.08 * \MAGNIFICATION}
\def\BRACEVSPACE{0.11 * \MAGNIFICATION}
\def\BRACEAMPLITUDE{2 mm * \MAGNIFICATION}
\def\BRACEXSIFT{6.5 mm * \MAGNIFICATION}
\def\VDOTSIZE{\LINEWIDTH}
\def\FROMVDOT{0.3}
\def\DOTSIZE{1 * \MAGNIFICATION}
\def\DOTSBRANK{0.6 * \MAGNIFICATION}
\def\DOTSPOSITION{0 * \MAGNIFICATION}
\def\drawvdots#1#2{
\coordinate (C#1_#2_0) at ($(#1)!\FROMVDOT!(#2)$);
\coordinate (C#1_#2_1) at ($(#1)!{1-\FROMVDOT}!(#2)$);
\draw[line width = \LINEWIDTH] (#1) -- (C#1_#2_0) (#2) -- (C#1_#2_1);
\tvdot (D#1_#2_0) at ($(C#1_#2_0)!.25!(C#1_#2_1)$) {};
\tvdot (D#1_#2_1) at ($(C#1_#2_0)!.5!(C#1_#2_1)$) {};
\tvdot (D#1_#2_2) at ($(C#1_#2_0)!.75!(C#1_#2_1)$) {};
}
\def\Blackcircle{\node [circle, draw, black, fill=black,
inner sep=0pt, minimum width=\POINTSIZE, label distance=1mm]}
\def\Whitecircle{\node [circle, draw, black, fill=white,
inner sep=0pt, minimum width=\POINTSIZE, label distance=1mm]}
\def\tdot{\node [circle, draw, black, fill=black,
inner sep=0pt, minimum width=\DOTSIZE, label distance=1mm]}
\def\tvdot{\node [circle, draw, black, fill=black,
inner sep=0pt, minimum width=\VDOTSIZE, label distance=1mm]}
\begin{document}
\title[On an analog of the Arakawa-Kaneko zeta function]{On an analog of the Arakawa-Kaneko zeta function and relations of some multiple zeta values}
\author{RYOTA UMEZAWA}
\subjclass[2010]{Primary 11M32, Secondary 40B05}
\keywords{Arakawa-Kaneko zeta function, Multiple zeta function.}
\date{}
\maketitle
\begin{abstract}
T. Ito defined an analog of the Arakawa-Kaneko zeta function to obtain relations among Mordell-Tornheim multiple zeta values. In this paper, we develop two things related to an analog of the Arakawa-Kaneko zeta function. One is to find an analog of the Arakawa-Kaneko zeta function of Miyagawa-type (defined by T. Miyagawa) and to obtain a relation among  Miyagawa-type MZVs. The other is to find a class of  zeta functions to which Ito's zeta functions of  the case of general index are related.
\end{abstract}
\section{Introduction}
The Arakawa-Kaneko zeta function is the following function introduced in \cite{ArakawaKaneko}.
\begin{definition}[The Arakawa-Kaneko zeta function]
For $\mathbf{k} = (k_{1}, \dots, k_{n}) \in \mathbb{N}^{n}$ and $s \in \mathbb{C}$ with $\Re(s) > 1-n$, the Arakawa-Kaneko zeta function is defined by
\[\xi(\mathbf{k};s)= \frac{1}{\Gamma(s)} \int_{0}^{\infty} t^{s-1} \frac{{\rm Li}_{\mathbf{k}}(1-e^{-t})}{e^{t}-1}\, dt,\]
where ${\rm Li}_{\mathbf{k}}(z)$ is the multi-polylogarithm defined by
\[{\rm Li}_{\mathbf{k}}(z) = \sum_{0<m_{1}<m_{2} < \dots < m_{n}} \frac{z^{m_{n}}}{m_{1}^{k_{1}} m_{2}^{k_{2}} \cdots m_{n}^{k_{n}}}\qquad (|z|<1).\]
\end{definition}

The Arakawa-Kaneko zeta function has a connection with Euler-Zagier multiple zeta values (EZ-type MZVs for brevity) and poly-Bernoulli numbers (see \cite{ArakawaKaneko}). Here, EZ-type MZVs are the special values of the following functions. 
\begin{definition}[The Euler-Zagier multiple zeta function (EZ-type MZF)]
For $\mathbf{s} = (s_{1},\dots,s_{n}) \in \mathbb{C}^{n}$, the Euler-Zagier multiple zeta function is defined by
\[\zeta(\mathbf{s}) = \sum_{0 < m_1 < m_2 < \dots < m_n} \frac{1}{m_{1}^{s_{1}} m_{1}^{s_{1}} \cdots m_{n}^{s_{n}}}.\]
\end{definition}
This series converges absolutely when
\[\sum_{i=0}^{k} \Re (s_{n-i}) > k+1\]
for any $k$ with $0 \le k \le n-1$ (see \cite{Matsumoto2}) and can be continued meromorphically to the whole $\mathbb{C}^{n}$ space (see \cite{AkiyamaEgamiTanigawa}). The values of EZ-type MZFs at $\mathbf{k} = (k_{1}, \dots, k_{n}) \in \mathbb{N}^{n}$ with $k_{n} \ge 2$ are called Euler-Zagier multiple zeta values (EZ-type MZVs).

In this paper, we focus on the fact that the properties of  the Arakawa-Kaneko zeta function lead to certain relations among EZ-type MZVs. Regarding this, there is the work of Ito \cite{Ito}. Ito introduced the following function as an analog of the Arakawa-Kaneko zeta function:
\begin{definition}[The Ito zeta function]
For $k_{1}, \dots, k_{r} \in \mathbb{N}$ and $s \in \mathbb{C}$ with $\Re(s) > 1-r$, we define
\begin{equation}\label{eq:Ito}
\xi_{MT}(k_{1},\dots,{k_{r}};s) = \frac{1}{\Gamma(s)} \int_{0}^{\infty} t^{s-1} \frac{\prod_{i=1}^{r} {\rm Li}_{k_{i}}(1-e^{-t})}{e^{t}-1} \,dt.
\end{equation}
\end{definition}
We call the function (\ref{eq:Ito}) the Ito zeta function in this paper. We also introduce MT-type MZVs which are the special values of the following functions.
\begin{definition}[The Mordell-Tornheim multiple zeta function (MT-type MZF)]
For $s_{1},\dots,s_{r+1}\in\mathbb{C}$, the Mordell-Tornheim multiple zeta function is defined by
\[\zeta_{MT}(s_{1}, \dots, s_{r}; s_{r+1}) = \sum_{m_{1}=1, \dots, m_{r}=1}^{\infty} \frac{1}{m_{1}^{s_{1}} \cdots m_{r}^{s_{r}} (m_{1} + \dots + m_{r})^{s_{r+1}}}.\]
\end{definition}
This series converges absolutely when
\[\sum_{l=1}^{j} \Re(s_{k_{l}}) + \Re(s_{r+1}) > j\]
with $1 \le k_{1} < k_{2} < \dots < k_{j} \le r$ for any $j=1,2,\dots,r$ (see \cite{OkamotoOnozuka}) and can be continued meromorphically to the whole $\mathbb{C}^{n}$ space (see \cite{Matsumoto}). The values of MT-type MZF at non-negative integer points in the domain of convergence are called Mordell-Tornheim multiple zeta values (MT-type MZVs). Ito used his zeta function to obtain certain relations among MT-type MZVs. Therefore, Ito zeta function is an analog of the Arakawa-Kaneko zeta function of MT-type.

There is a generalization of Ito zeta function, which was given by Ito himself as follows.
\begin{definition}[The Generalized Ito zeta function ($r=1$)]
For $\mathbf{k} = (k_1, \dots, k_n)\in\mathbb{N}^{n}$, $k_{n+1} \in \mathbb{Z}_{\ge0}$ and $s\in \mathbb{C}$ with $\Re(s) >1-n$ we define
\begin{equation}\label{eq:AKZofM}
\xi((\mathbf{k}; k_{n+1});s) = \frac{1}{\Gamma(s)} \int_{0}^{\infty} \frac{t^{s-1}}{e^{t}-1} {\rm Li}_{\mathbf{k}; k_{n+1}}(1-e^{-t})\,dt,
\end{equation}
where
\[{\rm Li}_{\mathbf{k}; k_{n+1}}(z) = \sum_{m_{1}=1,\dots,m_{n}=1}^{\infty} \frac{z^{\sum_{j=1}^{n} m_{j}}}{m_{1}^{k_{1}} \cdots m_{n}^{k_{n}} (\sum_{j=1}^{n} m_{j})^{k_{n+1}}}\quad (|z|<1).\]
\end{definition}
Ito considered a version of the function (\ref{eq:AKZofM}), in which ${\rm Li}_{\mathbf{k};k_{n+1}}(1-e^{-t})$ is replaced by a product of  $r$ quantities of the form ${\rm Li}_{\mathbf{k};k_{n+1}}(1-e^{-t})$ (\cite[Definition 13]{Ito}). Therefore, we call the function (\ref{eq:AKZofM}) generalized Ito zeta function ($r=1$).

On the other hand, There is also a generalization of MT-type MZF, which was given by Miyagawa as follows.
\begin{definition}[The Miyagawa multiple zeta function (Miyagawa-type MZF)]
For $s_{1},\dots,s_{r+1}\in\mathbb{C}$, we define
\begin{align}
&\hat{\zeta}_{MT,j,r} (s_{1},\dots,s_{j};s_{j+1},\dots,s_{r+1}) \label{de:MiyagawaMZF}\\
&=\sum_{m_{1}=1,\dots,m_{r}=1}^{\infty} \frac{1}{m_{1}^{s_{1}} \cdots m_{j}^{s_{j}} \prod_{u=j+1}^{r+1} (\sum_{v=1}^{u-1} m_{v} )^{s_{u}}}. \nonumber
\end{align}
\end{definition}
This function was introduced by Miyagawa \cite{Miyagawa}. Moreover, he showed that the function (\ref{de:MiyagawaMZF}) can be continued meromorphically
to the whole $\mathbb{C}^{r+1}$ space.
We call the function (\ref{de:MiyagawaMZF}) the Miyagawa multiple zeta function (Miyagawa-type MZF). Moreover, we call the values of Miyagawa-type MZFs at non-negative integer points in the domain of convergence Miyagawa multiple zeta values (Miyagawa-type MZVs).
In the present paper, we use the function (\ref{eq:AKZofM}) to obtain certain relations among Miyagawa-type MZVs.
Ito's method uses functional relations between functions (\ref{eq:Ito}) and MT-type MZFs (Theorem \ref{th:IZfrel}), and our method uses functional relations between functions (\ref{eq:AKZofM}) and Miyagawa-type MZFs (Theorem \ref{th:AKZofMfre}). However, Theorem \ref{th:IZfrel} and Theorem \ref{th:AKZofMfre} only give functional relations in the case when all the indices $k_i$ are 2 in the function (\ref{eq:Ito}) and the function (\ref{eq:AKZofM}). In this paper, we study also functional relations for the function (\ref{eq:Ito}) and the function (\ref{eq:AKZofM}) with general indices. For this purpose, we introduce the following new class of multiple zeta functions.
\begin{definition}[The generalized Mordell-Tornheim multiple zeta function (GMT-type MZF)]\label{de:Uzeta}
For $\mathbf{s}_{i}=(s_{i,1},\dots,s_{i,n_{i}}) \in \mathbb{C}^{n_{i}}\, (1 \le i \le r+1,\,n_{i} \in \mathbb{N})$, we define
\begin{align}
&\zeta_{MT}(\mathbf{s}_{1},\mathbf{s}_{2},\dots,\mathbf{s}_{r};\mathbf{s}_{r+1})\label{eq:defU}\\
&=\sum_{\substack{0<m_{1,1}<m_{1,2}<\dots<m_{1,n_{1}} \\ \vdots \\ 0<m_{r,1}<m_{r,2}<\dots<m_{r,n_{r}}}} \, \sum_{m_{r+1,1}=1,\dots ,m_{r+1,n_{r+1}-1}=1}^{\infty} \nonumber \\
&\quad\frac{1}{\prod_{i=1}^{r} \prod_{j=1}^{n_{i}} m_{i,j}^{s_{i,j}} \prod_{u=1}^{n_{r+1}} (\sum_{v=1}^{r} m_{v,n_{v}} +\sum_{w=1}^{u-1} m_{r+1,w})^{s_{r+1,u}}} \nonumber \\
&=\sum_{\substack{0<m_{1,1},0<m_{1,2}, \dots, 0<m_{1,n_{1}} \\ \vdots \\ 0<m_{r,1},0<m_{r,2},\dots, 0<m_{r,n_{r}}}} \, \sum_{m_{r+1,1}=1,\dots ,m_{r+1,n_{r+1}-1}=1}^{\infty} \nonumber \\
&\quad\frac{1}{\prod_{i=1}^{r} \prod_{k=1}^{n_{i}}(\sum_{j=1}^{k} m_{i,j})^{s_{i,k}} \prod_{u=1}^{n_{r+1}} (\sum_{i=1}^{r} \sum_{j=1}^{n_{i}} m_{i,j}+ \sum_{w=1}^{u-1} m_{r+1,w})^{s_{r+1,u}}} \nonumber.
\end{align}
\end{definition}
We call this function the generalized Mordell-Tornheim multiple zeta function (GMT-type MZF) and call the values of GMT-type MZFs at non-negative integer points in the domain of convergence generalized Mordell-Tornheim multiple zeta values (GMT-type MZVs).
As a consequence of the present study, we can obtain relations among special values of functions (\ref{eq:defU}).

Regarding relations among those functions, the known results and the results shown in the present paper are summarized as follows.
\begin{table}[hbtp]
\renewcommand{\arraystretch}{1.2}
\label{table:data_type}
\centering
\begin{tabular}{c||c|cc}
\hline
$\xi$-function                                                    & special value  & \multicolumn{2}{|c}{functional relation} \\ \hline \hline
Arakawa-Kaneko & EZ-type & EZ-type &\\
zeta function& (Theorem \ref{th:AKZspv}) & (Theorem \ref{th:AKZfrel}) & \\ \hline
& & MT-type  & \multirow{2}{*}{if $k_{i} \le 2$} \\ 
Ito & MT-type & (Theorem \ref{th:ItoGfre}) & \\ \cline{3-4}
zeta function & (Theorem \ref{th:IZspv}) & GMT-type ($n_{r+1} = 1$) & \multirow{2}{*}{general} \\
 & & (Theorem \ref{th:ItoGfre}) & \\ \hline
\multirow{2}{*}{Generalized Ito} & & Miyagawa-type & if $k_{i} \le 2$ \\
\multirow{2}{*}{zeta function}& Miyagawa-type & (Theorem \ref{th:AKZofMGfre}) &($1\le i \le n$) \\ \cline{3-4}
\multirow{2}{*}{($r=1$)} & (Theorem \ref{th:AKZofMspv}) & GMT-type &\multirow{2}{*}{general} \\
 &   &(Theorem \ref{th:AKZofMGfre})&\\ \hline
\hline
\end{tabular}
\end{table}
\begin{remark*} Theorem \ref{th:IZfrel} expresses the relationship between Ito zeta functions with $k_{i} = 2$ and MT-type MZFs, and  Theorem \ref{th:AKZofMfre} expresses the relationship between generalized Ito zeta functions $(r=1)$ with $k_{i} = 2\ (1 \le i \le n)$ and Miyagawa-type MZFs.
\end{remark*}

In Section \ref{se:Pre}, we provide some notations, lemmas and known results which we need in later sections. In Section \ref{se:MtypeAK}, we discuss the function (\ref{eq:AKZofM}) and prove Theorem \ref{th:AKZofMspv} and Theorem \ref{th:AKZofMfre}. As a consequence, we can obtain relations among Miyagawa-type MZVs. In Section 
\ref{se:Uzeta}, we discuss the function (\ref{eq:defU}) and prove several propositions on the function (\ref{eq:defU}). In Section \ref{se:Generalize}, using  functions (\ref{eq:defU}), we generalize Theorem \ref{th:IZfrel} and \ref{th:AKZofMfre} to Theorem \ref{th:ItoGfre} and \ref{th:AKZofMGfre}, respectively. As a consequence, we can obtain relations among GMT-type MZVs.
\section{Preliminaries}\label{se:Pre}
In this section, we provide some notations, lemmas and known results which we need later.
In this paper,  unless otherwise noted, $k,\,n$ and $r$ denote positive integers and $s$ is a complex number also when these have subscripts. Moreover, $\{k\}^{n}$ denotes $n$ repetitions of $k$. For example, $(1,2,2,3) = (1,\{2\}^{2},3)$. For $\mathbf{k} = (k_{1}, \dots, k_{n})$, we define $\mathbf{k}_{\pm} = (k_{1}, \dots, k_{n-1},k_{n}\pm1)$ and ${}_{\pm}\mathbf{k} = (k_{1}\pm1,k_{2}, \dots, k_{n})$. Let $\mathbf{k}^{*}$ denote the  dual index of $\mathbf{k}$.

\begin{lemma}[{\cite[Lemma 1]{ArakawaKaneko} and \cite[Lemma 4]{Ito}}]\label{le:Lidi}
For $\mathbf{k} = (k_{1}, \dots, k_{n}) \in \mathbb{N}^{n}$, the following formulas hold:
\begin{enumerate}[{\rm (i)}]
\item\[\frac{d}{dz}{\rm Li}_{\mathbf{k}}(z) = \begin{cases}
\frac{1}{z} {\rm Li}_{\mathbf{k}_{-}}(z)&({\rm if}\ k_{n} > 1),\\
 \frac{1}{1 - z} {\rm Li}_{k_1, \dots ,k_{n-1}}(z)&({\rm if}\ k_{n} = 1).
\end{cases}\]
\item\[\frac{d}{dz}{\rm {Li}}_{\mathbf{k}; k_{n+1}}(z) = \begin{cases}
\frac{1}{z} {\rm Li}_{\mathbf{k}; k_{n+1} - 1}(z)&({\rm if}\ k_{n+1} > 1),\\
\frac{1}{z} \prod_{i=1}^{n}{\rm Li}_{k_i}(z)&({\rm if}\ k_{n+1} =1).
\end{cases}\]
\end{enumerate}
\end{lemma}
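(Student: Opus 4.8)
The plan is to prove both formulas by differentiating the defining power series term by term, which is legitimate on compact subsets of the unit disk where each series converges uniformly; the bulk of the argument is then a purely formal manipulation of the resulting series. For part (i), write a general term of ${\rm Li}_{\mathbf{k}}(z)$ as $z^{m_n}/(m_1^{k_1}\cdots m_n^{k_n})$ and observe that $\frac{d}{dz}z^{m_n}=m_n z^{m_n-1}$, so the factor $m_n$ cancels one power in the denominator, giving $z^{m_n-1}/(m_1^{k_1}\cdots m_{n-1}^{k_{n-1}}m_n^{k_n-1})$. When $k_n>1$ this is exactly $z^{-1}$ times the general term of ${\rm Li}_{\mathbf{k}_{-}}(z)$, and summing over $0<m_1<\dots<m_n$ yields the first case at once. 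The same computation disposes of part (ii): differentiating $z^{M}/(m_1^{k_1}\cdots m_n^{k_n}M^{k_{n+1}})$ with $M=\sum_{j=1}^n m_j$ produces the factor $M$, which reduces $M^{k_{n+1}}$ to $M^{k_{n+1}-1}$, so for $k_{n+1}>1$ the derivative is $z^{-1}{\rm Li}_{\mathbf{k};k_{n+1}-1}(z)$.

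The two remaining cases are precisely where the exponent of the cancelled variable drops to zero, and each needs one extra step. For $k_n=1$ in part (i), after differentiation the term becomes $z^{m_n-1}/(m_1^{k_1}\cdots m_{n-1}^{k_{n-1}})$, which no longer depends on $m_n$ except through the power of $z$; I would fix $m_1<\dots<m_{n-1}$ and sum the free index $m_n$ over all integers $>m_{n-1}$, obtaining the geometric series $\sum_{m_n>m_{n-1}}z^{m_n-1}=z^{m_{n-1}}/(1-z)$. Factoring this out produces exactly $\frac{1}{1-z}{\rm Li}_{k_1,\dots,k_{n-1}}(z)$, as claimed. For $k_{n+1}=1$ in part (ii), the factor $M^{k_{n+1}-1}=M^{0}$ disappears, leaving $\sum_{m_1,\dots,m_n\ge1}z^{M}/(m_1^{k_1}\cdots m_n^{k_n})$ after clearing the $z^{-1}$; since the indices now run independently and the summand factors, this sum splits as $\prod_{i=1}^n\sum_{m_i\ge1}z^{m_i}/m_i^{k_i}=\prod_{i=1}^n{\rm Li}_{k_i}(z)$, giving the second case.

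The only genuine subtlety, and hence the step I would treat with some care, is justifying differentiation under the summation sign together with the rearrangements it triggers (the geometric resummation in the $k_n=1$ case and the factorization into a product in the $k_{n+1}=1$ case). Both are controlled by absolute and locally uniform convergence on $|z|<1$: the derived series are again power series whose coefficients grow at most polynomially, so they converge absolutely on every disk $|z|\le\rho<1$, which simultaneously licenses termwise differentiation and permits the reordering and regrouping of terms. Once this is recorded, the identities follow formally from the two observations above, and no further estimates are required.
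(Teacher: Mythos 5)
Your proposal is correct: termwise differentiation of the defining series, cancellation of the factor $m_n$ (resp.\ $M=\sum_j m_j$) against one power in the denominator, the geometric resummation over the now-free index $m_n$ when $k_n=1$, and the factorization of $\sum_{m_1,\dots,m_n\ge 1} z^{M}/\prod_i m_i^{k_i}$ into $\prod_i {\rm Li}_{k_i}(z)$ when $k_{n+1}=1$ are all exactly right, and the appeal to locally uniform absolute convergence on $|z|\le\rho<1$ (coefficients of at most polynomial growth) legitimately covers both the differentiation under the sum and the rearrangements. The paper itself states this lemma as a quotation from Arakawa--Kaneko and Ito without reproducing a proof, and your argument is the standard one given in those sources, so there is nothing to contrast.
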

By Lemma \ref{le:Lidi}, we have
\[\frac{d}{du}{\rm Li}_{\mathbf{k}_{+}}(1-e^{-t-u}) = \frac{1}{e^{t+u}-1}{\rm Li}_{\mathbf{k}}(1-e^{-t-u}),\]
\[\frac{d}{du}{\rm Li}_{\mathbf{k}; 1}(1-e^{-t-u}) = \frac{1}{e^{t + u} - 1} \prod_{i=1}^{n}{\rm Li}_{k_i}(1-e^{-t - u}),\]
\[\frac{d}{du}{\rm Li}_{\mathbf{k};k_{n+1}+1}(1-e^{-t-u}) = \frac{1}{e^{t + u} - 1} {\rm Li}_{\mathbf{k};k_{n+1}}(1-e^{-t - u}).\]
Therefore, we obtain the following corollary.
\begin{corollary}\label{co:liint}
The following formulas hold:
\[\int_{0}^{\infty} \frac{1}{e^{t + u} - 1} {\rm Li}_{\mathbf{k}}(1-e^{-t - u})\, du = \zeta(\mathbf{k}_{+}) - {\rm Li}_{\mathbf{k}_{+}} (1 - e ^{-t}), \]
\[\int_{0}^{\infty} \frac{1}{e^{t + u} - 1} \prod_{i=1}^{n}{\rm Li}_{k_i}(1-e^{-t - u})\, du = \zeta_{MT}(\mathbf{k}; 1) - {\rm Li}_{\mathbf{k}; 1} (1 - e ^{-t}), \]
\[\int_{0}^{\infty} \frac{1}{e^{t + u} - 1} {\rm Li}_{\mathbf{k};k_{n+1}}(1-e^{-t - u})\, du = \zeta_{MT}(\mathbf{k}; k_{n+1} + 1) - {\rm Li}_{\mathbf{k}; k_{n+1} + 1} (1 - e ^{-t}).\]
\end{corollary}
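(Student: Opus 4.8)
The plan is to read each of the three formulas simply as the fundamental theorem of calculus applied to the three differentiation identities displayed immediately before the statement. Those identities exhibit each integrand as a total derivative in $u$: the integrand $\frac{1}{e^{t+u}-1}{\rm Li}_{\mathbf{k}}(1-e^{-t-u})$ of the first integral is exactly $\frac{d}{du}{\rm Li}_{\mathbf{k}_{+}}(1-e^{-t-u})$, and likewise the integrands of the second and third integrals are $\frac{d}{du}{\rm Li}_{\mathbf{k};1}(1-e^{-t-u})$ and $\frac{d}{du}{\rm Li}_{\mathbf{k};k_{n+1}+1}(1-e^{-t-u})$. Hence I would rewrite each left-hand side as $\left[F(u)\right]_{u=0}^{u=\infty}=\lim_{u\to\infty}F(u)-F(0)$ for the corresponding antiderivative $F$, treating the improper integral as $\lim_{U\to\infty}\int_{0}^{U}F'(u)\,du$.

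Next I would evaluate the two boundary terms. At $u=0$ the antiderivative gives directly the subtracted multi-polylogarithm on the right, namely ${\rm Li}_{\mathbf{k}_{+}}(1-e^{-t})$, and respectively ${\rm Li}_{\mathbf{k};1}(1-e^{-t})$ and ${\rm Li}_{\mathbf{k};k_{n+1}+1}(1-e^{-t})$. For the upper limit I would use that $1-e^{-t-u}\to 1$ as $u\to\infty$, so the surviving term is the value of the (generalized) multi-polylogarithm at $z=1$. Identifying these boundary values with the stated zeta values, ${\rm Li}_{\mathbf{k}_{+}}(1)=\zeta(\mathbf{k}_{+})$, ${\rm Li}_{\mathbf{k};1}(1)=\zeta_{MT}(\mathbf{k};1)$ and ${\rm Li}_{\mathbf{k};k_{n+1}+1}(1)=\zeta_{MT}(\mathbf{k};k_{n+1}+1)$, then reproduces exactly the three right-hand sides, and the finiteness of both boundary terms guarantees that each improper integral converges.

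The one step needing genuine justification, which I expect to be the main obstacle, is the passage $z\to 1^{-}$ inside each series. Since every term of these (generalized) multi-polylogarithms is nonnegative and $z^{m_{n}}$ (respectively $z^{\sum_{j}m_{j}}$) increases to $1$ as $z\uparrow 1$, the monotone convergence theorem (equivalently Abel's continuity theorem) reduces the matter to checking that each limiting series converges. For the first identity this is the admissibility of $\mathbf{k}_{+}$, whose last entry is $k_{n}+1\ge 2$; for the second and third it is the Mordell--Tornheim convergence condition recalled in the introduction, which holds for $\zeta_{MT}(\mathbf{k};1)$ and $\zeta_{MT}(\mathbf{k};k_{n+1}+1)$ because each $k_{i}\ge 1$ forces every sub-sum of the exponents, together with the trailing exponent, to exceed the number of chosen summands. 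Granting these convergences, the limits at $z=1$ equal the asserted zeta values and the argument is complete.
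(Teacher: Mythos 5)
Your proposal is correct and is essentially the paper's own (implicit) argument: the corollary is deduced by integrating the three derivative identities displayed just before it, evaluating the antiderivative at $u=0$ and $u\to\infty$ exactly as you do. Your additional justification of the boundary limit $z\to 1^{-}$ via monotone convergence and the admissibility/convergence conditions is a welcome filling-in of details the paper leaves unstated, but it does not change the route.
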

\begin{lemma}\label{le:Aintrep}
For a matrix $A=(a_{i,j})_{1 \le i \le n, 1 \le j \le r},\ a_{i,j} \in \mathbb{R}_{\ge 0}$ and $\mathbf{s} = (s_{1}, \dots, s_{n}) \in \mathbb{C}^{n}$ we define
\begin{align}
&\zeta(\mathbf{s};A) \label{eq:Mzeta}\\
&=\sum_{m_{1}=1,\dots,m_{r}=1}^{\infty} (a_{1,1}m_{1}+ \dots+ a_{1,r}m_{r})^{-s_{1}} \cdots (a_{n,1}m_{1}+ \dots +a_{n,r}m_{r})^{-s_{n}} \nonumber \\
&=\sum_{m_{1}=1,\dots,m_{r}=1}^{\infty}\prod_{i=1}^{n}\frac{1}{(A
\begin{pmatrix}
m_{1} \\
\vdots \\
m_{r}
\end{pmatrix}
)_{i}^{s_{i}}} \nonumber
\end{align}
where $\,()_{i}$ represents the i-th element of a vertical vector. 
For $\Re(s_{i}) > 0\ (1 \le i \le n)$, if $\zeta(\mathbf{s};A)$ converges absolutely then 
\begin{equation}\label{eq:Aintrep}
\zeta(\mathbf{s};A)=\frac{1}{\prod_{i=1}^{n}\Gamma(s_{i})}\int_{0}^{\infty} \cdots \int_{0}^{\infty} \Biggl(\prod_{i=1}^{r}\frac{1}{ \exp{({}^{t}\!A
\begin{pmatrix}
t_{1} \\
\vdots \\
t_{n}
\end{pmatrix}
)_{i}}-1}\Biggr)\prod_{i=1}^{n}t_{i}^{s_{i}-1}\,dt_{i}.
\end{equation}
where ${}^{t}\!A$ represents the transposed matrix of A.
\end{lemma}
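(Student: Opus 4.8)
The plan is to expand each factor $((A\mathbf{m})_{i})^{-s_{i}}$ using the integral representation of the Gamma function and then collapse the resulting sum into a product of geometric series. Since $\Re(s_{i})>0$ and each $(A\mathbf{m})_{i}=\sum_{j=1}^{r}a_{i,j}m_{j}$ is a positive real number for $\mathbf{m}\in\mathbb{N}^{r}$ (absolute convergence of $\zeta(\mathbf{s};A)$ forces every row of $A$ to be nonzero, so $(A\mathbf{m})_{i}\ge\sum_{j}a_{i,j}>0$), we may write
\[
\frac{1}{((A\mathbf{m})_{i})^{s_{i}}}=\frac{1}{\Gamma(s_{i})}\int_{0}^{\infty}t_{i}^{s_{i}-1}e^{-(A\mathbf{m})_{i}t_{i}}\,dt_{i}.
\]
Taking the product over $i=1,\dots,n$ turns each summand of $\zeta(\mathbf{s};A)$ into an $n$-fold integral whose total exponential factor is $\exp\bigl(-\sum_{i=1}^{n}(A\mathbf{m})_{i}t_{i}\bigr)$.

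First I would rewrite this exponent by interchanging the two finite sums,
\[
\sum_{i=1}^{n}(A\mathbf{m})_{i}t_{i}=\sum_{i=1}^{n}\sum_{j=1}^{r}a_{i,j}m_{j}t_{i}=\sum_{j=1}^{r}m_{j}\Bigl(\sum_{i=1}^{n}a_{i,j}t_{i}\Bigr)=\sum_{j=1}^{r}m_{j}\,({}^{t}\!A\mathbf{t})_{j},
\]
so that the exponential factorizes as $\prod_{j=1}^{r}e^{-m_{j}({}^{t}\!A\mathbf{t})_{j}}$ and the dependence on each summation variable $m_{j}$ is separated. Next I would sum over all $(m_{1},\dots,m_{r})\in\mathbb{N}^{r}$ and interchange this sum with the $n$-fold integral; after the interchange the inner sum factorizes into a product of geometric series,
\[
\sum_{m_{1},\dots,m_{r}=1}^{\infty}\prod_{j=1}^{r}e^{-m_{j}({}^{t}\!A\mathbf{t})_{j}}=\prod_{j=1}^{r}\sum_{m_{j}=1}^{\infty}e^{-m_{j}({}^{t}\!A\mathbf{t})_{j}}=\prod_{j=1}^{r}\frac{1}{e^{({}^{t}\!A\mathbf{t})_{j}}-1}.
\]
Here convergence of each geometric series uses $({}^{t}\!A\mathbf{t})_{j}=\sum_{i}a_{i,j}t_{i}>0$ for $\mathbf{t}\in(0,\infty)^{n}$ (each column of $A$ is nonzero, again by absolute convergence). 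Combining this with the prefactor $\prod_{i=1}^{n}\Gamma(s_{i})^{-1}$ and the weights $\prod_{i=1}^{n}t_{i}^{s_{i}-1}$ yields exactly the right-hand side of (\ref{eq:Aintrep}).

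The step requiring the most care—indeed the only genuine obstacle—is the interchange of summation and integration. I would justify it by Fubini's theorem, drawing the required integrability from the absolute convergence hypothesis. Replacing each $s_{i}$ by $\Re(s_{i})$ and running the identical computation with Tonelli's theorem, which applies since every integrand is then non-negative, shows that the associated repeated integral of absolute values equals $\zeta((\Re(s_{1}),\dots,\Re(s_{n}));A)$; this quantity is finite precisely because $\zeta(\mathbf{s};A)$ converges absolutely. That finiteness is exactly the hypothesis of Fubini's theorem needed to license the interchange in the original complex-valued computation, so the formal manipulations above are valid and the lemma follows.
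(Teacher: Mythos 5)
Your proposal is correct and follows essentially the same route as the paper: apply $m^{-s}=\frac{1}{\Gamma(s)}\int_0^\infty t^{s-1}e^{-mt}\,dt$ to each factor, interchange the two finite sums in the exponent to factor the exponential over $j$, and then interchange the infinite sum with the integral, justified by the absolute convergence hypothesis. Your Tonelli/Fubini justification of that last interchange is in fact spelled out more carefully than in the paper, which simply asserts that the equality holds by absolute convergence.
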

\begin{remark}\label{re:matrixnotinuas}
The function $\zeta(\mathbf{s};A)$ was introduced by Matsumoto \cite{Matsumoto}. Moreover, he showed that the function $\zeta(\mathbf{s};A)$ can be continued meromorphically
to the whole $\mathbb{C}^{n}$ space.
\end{remark}
\begin{remark}
If there exists $i$ satisfying $\Re(s_{i}) \le 0$, then the right hand side of (\ref{eq:Aintrep}) diverges to the infinity.
\end{remark}
\begin{proof}[Proof of Lemma {\ref{le:Aintrep}}]
By using $m^{-s} = \frac{1}{\Gamma(s)}\int_{0}^{\infty}t^{s-1} e^{-mt}\,dt\ (\Re(s) > 0)$, we have
\begin{align*}
&\zeta(\mathbf{s};A) \\
&=\sum_{m_{1}=1,\dots,m_{r}=1}^{\infty} \frac{1}{\prod_{i=1}^{n}\Gamma(s_{i})} \\
&\quad\times \int_{0}^{\infty}\!\cdots\int_{0}^{\infty} e^{-t_{1}(a_{1,1}m_{1}+\dots+a_{1,r}m_{r})}  \cdots e^{-t_{n}(a_{n,1}m_{1}+\dots+a_{n,r}m_{r})}\prod_{i=1}^{n}t_{i}^{s_{i}-1}\,dt_{i}\\
&=\sum_{m_{1}=1,\dots,m_{r}=1}^{\infty} \frac{1}{\prod_{i=1}^{n}\Gamma(s_{i})} \\
&\quad\times \int_{0}^{\infty}\cdots\int_{0}^{\infty} e^{-(t_{1}a_{1,1}+\dots+t_{n}a_{n,1})m_{1}}\cdots e^{-(t_{1}a_{1,r}+\dots+t_{n}a_{n,r})m_{r}}\prod_{i=1}^{n}t_{i}^{s_{i}-1}\,dt_{i}\\
&=\frac{1}{\prod_{i=1}^{n}\Gamma(s_{i})}\int_{0}^{\infty}\!\cdots\!\int_{0}^{\infty}
\frac{1}{e^{t_{1}a_{1,1}+\dots+t_{n}a_{n,1}}-1} \cdots \frac{1}{e^{t_{1}a_{1,r}+\dots+t_{n}a_{n,r}}-1}\prod_{i=1}^{n}t_{i}^{s_{i}-1}\,dt_{i}.
\end{align*}
The last equality holds since $\zeta(\mathbf{s};A)$ converges absolutely.
\end{proof}
The following results on the Arakawa-Kaneko zeta function and the Ito zeta function are known.
\begin{theorem}[{\cite[Theorem 2.5]{KanekoTsumura}}]\label{th:AKZspv}
For $\mathbf{k} = (k_{1}, \dots, k_{n}) \in \mathbb{N}^{n}$, we write $|\mathbf{k}|=k_{1}+\dots+k_{n}$ and call it the weight of $\mathbf{k}$, and $d(\mathbf{k})=n$, the depth of $\mathbf{k}$. Moreover, we write
\[b(\mathbf{k}; \mathbf{j}) = \prod_{i=1}^{n} \binom{k_{i} + j_{i} - 1}{j_{i}}.\]For any $m \in \mathbb{N}$, we have
\[\xi (\mathbf{k}; m) = \sum_{|\mathbf{j}| = m-1,\,d(\mathbf{j}) = d((\mathbf{k}_{+})^{*})} b((\mathbf{k}_{+})^{*} ; \mathbf{j}) \zeta((\mathbf{k}_{+})^{*} + \mathbf{j}),\]
where the  sum is over all $\mathbf{j}=(j_{1},\dots,j_{n}) \in \mathbb{Z}_{\ge 0}^{n}$ satisfying $|\mathbf{j}| = m-1,\,d(\mathbf{j}) = d((\mathbf{k}_{+})^{*})$.
\end{theorem}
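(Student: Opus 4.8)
The plan is to recast the identity as a single generating-function equality in an auxiliary variable $y$, and then to prove that equality by the duality (change of variables $t\mapsto 1-t$) for iterated integrals.

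First I would package the special values. Multiplying by $y^{m-1}$, summing over $m\ge1$, and using $\Gamma(m)=(m-1)!$ together with $\sum_{m\ge1}\frac{(yt)^{m-1}}{(m-1)!}=e^{yt}$, one gets
\[\sum_{m\ge1}\xi(\mathbf{k};m)\,y^{m-1}=\int_0^\infty e^{yt}\,\frac{{\rm Li}_{\mathbf{k}}(1-e^{-t})}{e^{t}-1}\,dt=:F(y),\]
valid for small $|y|$. On the other side, writing $\mathbf{l}=(l_1,\dots,l_d)=(\mathbf{k}_+)^{*}$ and using $\binom{l_i+j_i-1}{j_i}=[y^{j_i}](1-y)^{-l_i}$ inside $b(\mathbf{l};\mathbf{j})$, a direct interchange of the two sums gives
\[\sum_{|\mathbf{j}|=m-1}b(\mathbf{l};\mathbf{j})\,\zeta(\mathbf{l}+\mathbf{j})=[y^{m-1}]\sum_{0<n_1<\dots<n_d}\ \prod_{i=1}^{d}(n_i-y)^{-l_i}.\]
Thus the theorem is equivalent to the single identity $F(y)=\sum_{0<n_1<\dots<n_d}\prod_i(n_i-y)^{-l_i}$ as analytic functions of $y$ near $0$, and it suffices to compare Taylor coefficients.

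Next I would bring $F(y)$ into iterated-integral form. The substitution $x=1-e^{-t}$ turns $\frac{dt}{e^{t}-1}$ into $\frac{dx}{x}$ and $e^{yt}$ into $(1-x)^{-y}$, while Lemma \ref{le:Lidi}(i) (with last entry $k_n+1\ge2$) gives $\frac{{\rm Li}_{\mathbf{k}}(x)}{x}\,dx=d\,{\rm Li}_{\mathbf{k}_+}(x)$; hence $F(y)=\int_0^1(1-x)^{-y}\,d\,{\rm Li}_{\mathbf{k}_+}(x)$. Since $\mathbf{k}_+$ is admissible, I would expand ${\rm Li}_{\mathbf{k}_+}$ as an iterated integral over the simplex $0<t_1<\dots<t_w<1$ in the one-forms $\frac{dt}{t}$ and $\frac{dt}{1-t}$ (the word encoding $\mathbf{k}_+$), with the deformation factor $(1-t_w)^{-y}$ attached to the outermost variable $t_w$. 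The heart of the argument is then the duality substitution $s_i=1-t_{w+1-i}$: it preserves the simplex, swaps $\frac{dt}{t}\leftrightarrow\frac{dt}{1-t}$, and therefore replaces the word of $\mathbf{k}_+$ by that of $(\mathbf{k}_+)^{*}=\mathbf{l}$, while moving the factor onto the innermost variable as $s_1^{-y}$. Evaluating the resulting deformed iterated integral term by term reproduces $\sum_{0<n_1<\dots<n_d}\prod_i(n_i-y)^{-l_i}$: each $\frac{ds}{1-s}$ form advances to the next summation index while carrying the running power $s^{(\cdot)-y}$, and each $\frac{ds}{s}$ form contributes a factor $(n_i-y)^{-1}$ (raising the exponent), so that the single shift $-y$ propagates into every index. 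The case $\mathbf{k}=(2)$ (where $\mathbf{k}_+=(3)$ and $(\mathbf{k}_+)^{*}=(1,2)$) already exhibits the full mechanism and can serve as the model computation.

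The main obstacle is the general bookkeeping in this last step: one must show, most cleanly by induction on the weight (the length of the word), that the deformed iterated integral with $s_1^{-y}$ on the innermost variable equals the shifted multiple zeta value attached to the dual index, tracking exactly how the factor $-y$ threads through the alternating $\frac{ds}{s}$ and $\frac{ds}{1-s}$ forms. The remaining points are analytic housekeeping---convergence of $F(y)$ and of the shifted series for $y$ near $0$, the legitimacy of interchanging summation and integration, and the justification for comparing Taylor coefficients---all of which are routine once the admissibility of $\mathbf{k}_+$ is used.
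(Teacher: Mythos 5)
This statement is quoted from Kaneko--Tsumura and the paper gives no proof of it, so there is nothing in-paper to compare against line by line; judged on its own, your argument is correct and complete in outline. Every step checks out: the generating-function reduction is legitimate for $|y|<1$ (Tonelli, since everything is positive for small $y>0$); the identity $\sum_{|\mathbf{j}|=m-1}b(\mathbf{l};\mathbf{j})\zeta(\mathbf{l}+\mathbf{j})=[y^{m-1}]\sum_{0<n_1<\cdots<n_d}\prod_i(n_i-y)^{-l_i}$ is exactly the expansion $(1-y/n)^{-l}=\sum_j\binom{l+j-1}{j}(y/n)^j$; the substitution $x=1-e^{-t}$ and Lemma~\ref{le:Lidi}(i) give $F(y)=\int_0^1(1-x)^{-y}\,d\,\mathrm{Li}_{\mathbf{k}_+}(x)$ because $\mathbf{k}_+$ is admissible; and the duality substitution does carry the factor onto $s_1^{-y}$ while producing the word of $(\mathbf{k}_+)^*$, whose admissibility (needed for convergence of the shifted series, since it forces $l_d\ge 2$) follows from that of $\mathbf{k}_+$. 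The one piece you correctly flag as requiring real work is the induction showing that the deformed iterated integral equals $\sum\prod_i(n_i-y)^{-l_i}$; the mechanism you describe (each $\frac{ds}{1-s}$ starts a new index with running exponent $n_i-y$, each $\frac{ds}{s}$ contributes a factor $(n_i-y)^{-1}$) is exactly right and the induction is routine. Your route differs in packaging from the one this paper uses for the analogous Theorem~\ref{th:AKZofMspv} (and from Kaneko--Tsumura's own argument): there the factor $t^{m-1}$ is written as $(\mathrm{Li}_1(1-e^{-t}))^{m-1}$ and expanded multinomially, and the coefficients $\binom{k_i+j_i-1}{j_i}$ arise combinatorially from shuffling $m-1$ extra one-forms into the word of the dual index. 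Your generating variable $y$ replaces that shuffle count by the algebraic identity for $(1-y)^{-l}$, which handles all $m$ simultaneously and, to my mind, makes the bookkeeping cleaner; the combinatorial version has the advantage of staying at a single integer $m$ and generalizing directly to the tree-shaped (Yamamoto) integrals used elsewhere in this paper.
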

\begin{theorem}[{\cite[Theorem 3.6]{KanekoTsumura}}]\label{th:AKZfrel}
The Arakawa-Kankeo zeta function $\xi (\mathbf{k}; s)$ can be written in terms of EZ-type MZFs as
\[\xi (\mathbf{k}; s) = \sum_{\mathbf{k'},\,j \ge 0} c_{\mathbf{k}} (\mathbf{k'}; j) \binom{s+j-1}{j} \zeta(\mathbf{k'}, j+s).\]
Here, the sum is over indices $\mathbf{k'}$ and integers $j \ge 0$ satisfying $|\mathbf{k'}| + j \le |\mathbf{k}|$, and $c_{\mathbf{k}} (\mathbf{k'}; j)$ is a $\mathbb{Q}$-linear combination of EZ-type MZVs of weight $|\mathbf{k}| - |\mathbf{k'}| - j$.
\end{theorem}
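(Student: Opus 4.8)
The plan is to turn $\xi(\mathbf{k};s)$ into an explicit iterated integral over a simplex and then evaluate it by expanding geometric series. First I would apply Lemma \ref{le:Lidi}(i) repeatedly---equivalently the first identity of Corollary \ref{co:liint}---to unfold ${\rm Li}_{\mathbf{k}}(1-e^{-t})$: since $\frac{d}{d\tau}{\rm Li}_{\mathbf{k}}(1-e^{-\tau})$ equals ${\rm Li}_{k_1,\dots,k_{n-1}}(1-e^{-\tau})$ when $k_n=1$ and $\frac{1}{e^\tau-1}{\rm Li}_{\mathbf{k}_-}(1-e^{-\tau})$ when $k_n\ge2$, iterating down to the empty index writes
\[{\rm Li}_{\mathbf{k}}(1-e^{-t}) = \int_{0<\tau_1<\dots<\tau_w<t}\omega_1(\tau_1)\cdots\omega_w(\tau_w),\qquad w=|\mathbf{k}|,\]
where each form $\omega_j$ is either the ``$1$-form'' $d\tau_j$ or $\frac{d\tau_j}{e^{\tau_j}-1}$, the pattern being dictated by $\mathbf{k}$: reading from the innermost variable, the blocks appear in the order $k_1,\dots,k_n$, and each block $k_i$ contributes one $1$-form followed by $k_i-1$ copies of $\frac{d\tau}{e^\tau-1}$, so there are exactly $d(\mathbf{k})$ many $1$-forms. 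Substituting this into the definition gives a single $(w+1)$-fold integral over $0<\tau_1<\dots<\tau_w<t<\infty$ with the outer weight $\frac{t^{s-1}}{\Gamma(s)(e^t-1)}$.

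Next I would expand every factor $\frac{1}{e^\tau-1}$, as well as the outer $\frac{1}{e^t-1}$, as $\sum_{m\ge1}e^{-m\tau}$ and integrate over the simplex. The two kinds of forms play different roles: the factors $\frac{1}{e^\tau-1}$ supply summation indices that, after ordering the partial sums of the $m$'s, assemble into an Euler--Zagier nested denominator---this is what produces the multiple zeta function $\zeta(\mathbf{k'},j+s)$ and the EZ-type MZV coefficients---whereas each $1$-form $d\tau_j$, integrated against the exponentials on either side, contributes a polynomial factor in the neighbouring variables. Carrying these polynomial factors up to the outermost integral and using $\frac{1}{\Gamma(s)}\int_0^\infty t^{s+j-1}e^{-Mt}\,dt=j!\binom{s+j-1}{j}M^{-(s+j)}$ is exactly what generates the binomials $\binom{s+j-1}{j}$ and the shift $j+s$ in the last argument. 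I would organize the bookkeeping as an induction on the weight $|\mathbf{k}|$, peeling the outermost form and splitting into the cases $k_n=1$ and $k_n\ge2$ handled by Corollary \ref{co:liint}; at each step the weight is distributed as $|\mathbf{k}|=|\mathbf{k'}|+j+(\text{weight of the coefficient})$, which both forces the finiteness constraint $|\mathbf{k'}|+j\le|\mathbf{k}|$ and identifies $c_{\mathbf{k}}(\mathbf{k'};j)$ as a $\mathbb{Q}$-linear combination of EZ-type MZVs of weight $|\mathbf{k}|-|\mathbf{k'}|-j$.

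The \emph{main obstacle} is the control of the ``incomplete'' simplex integrals: a $1$-form is integrated only up to the next variable, producing $\int_0^{\tau}$ rather than $\int_0^\infty$, so one must systematically write $\int_0^{\tau}=\int_0^\infty-\int_{\tau}^\infty$ and show that the boundary pieces reassemble into honest nested sums $\zeta(\mathbf{k'},j+s)$ rather than leaving dangling incomplete integrals; proving that this recursion terminates with the stated weight grading is the crux. The computation for $\mathbf{k}=(2)$, which gives
\[\xi((2);s)=\zeta(2)\zeta(s)-\zeta(2,s)-\binom{s}{1}\zeta(1,s+1),\]
already exhibits all three phenomena (a product term with $j=0$, an unshifted MZF, and a genuinely shifted term) and would guide the general bookkeeping. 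Finally I would justify the interchange of summation and integration in the region of absolute convergence $\Re(s)>1$ and then invoke meromorphic continuation (Remark \ref{re:matrixnotinuas}) to extend the identity to $\Re(s)>1-n$, using Theorem \ref{th:AKZspv} as a consistency check at the positive integers $s=m$.
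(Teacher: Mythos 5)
This statement is quoted from Kaneko--Tsumura \cite{KanekoTsumura} and the paper under review gives no proof of it, so there is no internal argument to compare yours with. Your strategy --- unfold ${\rm Li}_{\mathbf{k}}(1-e^{-t})$ into an iterated integral built from the forms $d\tau$ and $d\tau/(e^{\tau}-1)$, expand geometric series, and induct on the weight --- is essentially the standard route to this theorem, and your accounting of where each feature comes from is correct: the binomial arises from $\Gamma(s+j)/\Gamma(s)=j!\binom{s+j-1}{j}$, the shift $j+s$ from the accumulated power of $t$ at the outermost integral, and the weight grading of $c_{\mathbf{k}}(\mathbf{k}';j)$ from conservation of weight at each step. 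Your test case $\mathbf{k}=(2)$ is also correct.

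Two things keep this from being a proof. First, a technical misstatement: the splitting $\int_0^{\tau}=\int_0^{\infty}-\int_{\tau}^{\infty}$ can only be performed at positions carrying the form $d\tau_j/(e^{\tau_j}-1)$, where the complete integral converges to an MZV; at a bare $d\tau_j$ position the inner integrand grows polynomially, $\int_0^{\infty}$ diverges, and one must instead integrate directly over $[0,\tau_{j+1}]$ --- it is this direct integration, not the splitting, that produces the polynomial factors and the boundary terms. Second, and more seriously, the entire content of the theorem is the inductive lemma that you label ``the crux'' and leave unproved, namely
\[
{\rm Li}_{\mathbf{k}}(1-e^{-t})=\sum_{\mathbf{k}',\,j\ge0}\frac{c_{\mathbf{k}}(\mathbf{k}';j)}{j!}\,t^{j}\,{\rm Li}_{\mathbf{k}'}(e^{-t}),
\]
with $c_{\mathbf{k}}(\mathbf{k}';j)$ a $\mathbb{Q}$-combination of MZVs of weight $|\mathbf{k}|-|\mathbf{k}'|-j$; the theorem then follows by integrating each term against $t^{s-1}/\bigl(\Gamma(s)(e^{t}-1)\bigr)$ and continuing analytically. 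The induction does close, so the gap is one of execution rather than conception: if $k_{n}\ge2$ one applies the first identity of Corollary \ref{co:liint} together with $\int_{t}^{\infty}\tau^{j}e^{-M\tau}\,d\tau=e^{-Mt}\sum_{i=0}^{j}\frac{j!}{i!}\,t^{i}M^{i-j-1}$, which sends $t^{j}{\rm Li}_{\mathbf{k}'}(e^{-\tau})$ to a combination of the $t^{i}{\rm Li}_{(\mathbf{k}',\,j+1-i)}(e^{-t})$ of total weight one higher; if $k_{n}=1$ one computes $\int_{0}^{t}\tau^{j}{\rm Li}_{\mathbf{k}'}(e^{-\tau})\,d\tau$ directly, obtaining the constant $j!\,\zeta(k'_{1},\dots,k'_{p-1},k'_{p}+j+1)$ plus boundary terms of the same shape in which the last entry of $\mathbf{k}'$ is incremented. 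Writing out this induction (with the empty-index base case ${\rm Li}_{(1)}(1-e^{-t})=t$) is what must be added to turn your outline into a proof.
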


\begin{theorem}[{\cite[Proposition 5]{Ito}}]\label{th:IZspv}
For $m \in \mathbb{Z}_{\ge 0}$,
\[\xi_{MT} (k_{1},\dots,k_{r}; m+1) = \frac{1}{m!} \zeta_{MT}(k_{1}, \dots, k_{r}, \{1\}^{m}; 1).\]
\end{theorem}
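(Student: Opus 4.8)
The plan is to reduce everything to the case $s=1$ by exploiting the elementary identity ${\rm Li}_1(1-e^{-t}) = t$. First I would observe that, since $-\log(1-z) = \sum_{m\ge 1} z^m/m = {\rm Li}_1(z)$, we have
\[{\rm Li}_1(1-e^{-t}) = -\log\bigl(1-(1-e^{-t})\bigr) = -\log(e^{-t}) = t.\]
Consequently $t^m = \bigl({\rm Li}_1(1-e^{-t})\bigr)^m$, so the factor $t^m$ in the integrand of $\xi_{MT}(k_1,\dots,k_r;m+1)$ can be absorbed as $m$ additional polylogarithm factors of index $1$.

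Next, using $\Gamma(m+1)=m!$, I would rewrite
\[\xi_{MT}(k_1,\dots,k_r;m+1) = \frac{1}{m!}\int_0^\infty \frac{\bigl({\rm Li}_1(1-e^{-t})\bigr)^m\prod_{i=1}^r {\rm Li}_{k_i}(1-e^{-t})}{e^t-1}\,dt.\]
Since $\Gamma(1)=1$, the integral on the right is exactly $\xi_{MT}(k_1,\dots,k_r,\{1\}^m;1)$, the Ito zeta function attached to the index $(k_1,\dots,k_r,\{1\}^m)$ evaluated at $s=1$. Thus $\xi_{MT}(k_1,\dots,k_r;m+1) = \tfrac{1}{m!}\,\xi_{MT}(k_1,\dots,k_r,\{1\}^m;1)$, and the theorem is reduced to the base case $s=1$.

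It then remains to prove that $\xi_{MT}(\mathbf{k};1)=\zeta_{MT}(\mathbf{k};1)$ for an arbitrary index $\mathbf{k}$. This I would obtain directly from the second identity of Corollary \ref{co:liint}: after the substitution $v=t+u$ its left-hand side becomes $\int_t^\infty \frac{\prod_i {\rm Li}_{k_i}(1-e^{-v})}{e^v-1}\,dv$, so setting $t=0$ and using ${\rm Li}_{\mathbf{k};1}(0)=0$ gives $\int_0^\infty \frac{\prod_i {\rm Li}_{k_i}(1-e^{-v})}{e^v-1}\,dv = \zeta_{MT}(\mathbf{k};1)$, which is precisely $\xi_{MT}(\mathbf{k};1)$. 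Applying this with $\mathbf{k}=(k_1,\dots,k_r,\{1\}^m)$ completes the argument.

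The steps are all routine; the only points needing care are convergence, and the whole conceptual content is the observation ${\rm Li}_1(1-e^{-t})=t$. I would check that $\zeta_{MT}(k_1,\dots,k_r,\{1\}^m;1)$ lies in the domain of absolute convergence (any $j$ of the exponents $k_1,\dots,k_r,1,\dots,1$ sum to at least $j$, so adding the final exponent $1$ yields $>j$), and that the defining integral converges (near $t=0$ the integrand behaves like $t^{r+m-1}$, while near $t=\infty$ the factor $(e^t-1)^{-1}$ forces exponential decay against the at-most-polynomial growth of the polylogarithms). A purely computational alternative would instead expand the product into a multiple Dirichlet series, use $(1-e^{-t})^N/(e^t-1)=e^{-t}(1-e^{-t})^{N-1}$, and evaluate $\int_0^\infty t^m e^{-t}(1-e^{-t})^{N-1}\,dt$ via $x=1-e^{-t}$; this works but is longer and less transparent than the reduction above.
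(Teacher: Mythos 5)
Your proof is correct. Note first that the paper does not actually prove this statement itself --- it is quoted as Ito's Proposition 5 --- so there is no in-paper proof to compare against; but your argument is a clean, self-contained derivation using only ingredients already available in the paper. The two key reductions are exactly right: writing $t^{m}=\bigl({\rm Li}_{1}(1-e^{-t})\bigr)^{m}$ absorbs the power of $t$ into $m$ extra polylogarithm factors of index $1$, and the second identity of Corollary \ref{co:liint} evaluated at $t=0$ gives $\xi_{MT}(\mathbf{k};1)=\zeta_{MT}(\mathbf{k};1)$ because ${\rm Li}_{\mathbf{k};1}(0)=0$. Your convergence checks also cover the points that genuinely need care: the subset-sum criterion for $\zeta_{MT}(k_{1},\dots,k_{r},\{1\}^{m};1)$ is satisfied since any $j$ of the exponents sum to at least $j$ and the final exponent contributes $1$ more, and the integrand behaves like $t^{r+m-1}$ near $0$ and decays exponentially at infinity (the latter is needed because the factors ${\rm Li}_{1}(1-e^{-t})=t$ grow polynomially rather than tending to a constant). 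This is, moreover, essentially the same mechanism the paper uses to prove the analogous statement for the generalized Ito zeta function (Theorem \ref{th:AKZofMspv}): there too the proof hinges on ${\rm Li}_{1}(1-e^{-t})=t$ followed by a multinomial expansion, the only extra layer being the combinatorics coming from the additional index $k_{n+1}$.
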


\begin{theorem}[{\cite[Theorem 8]{Ito}}]\label{th:IZfrel}
For $r \in \mathbb{N}$ and $s \in \mathbb{C}$,
\begin{align*}
&\sum_{j=0}^{r-1} \binom{r-1}{j} (-1)^{j} \zeta(2)^{r-1-j} \Gamma(s) \xi_{MT} (\{2\}^{j}; s)\\
&=\sum_{j=0}^{r-1} \binom{r-1}{j} \Gamma(s+j) \zeta_{MT}(0, \{2\}^{r-1-j}, \{1\}^{j}; j+s).
\end{align*}
\end{theorem}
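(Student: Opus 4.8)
The plan is to turn both sides into Mellin-type integrals over $(0,\infty)$ and match them term by term. First I would record the integral form of the left-hand side: by the definition of the Ito zeta function, $\Gamma(s)\xi_{MT}(\{2\}^{j};s)=\int_{0}^{\infty}t^{s-1}({\rm Li}_2(1-e^{-t}))^{j}/(e^{t}-1)\,dt$, where the empty product equals $1$ when $j=0$. Pulling the finite $j$-sum inside the integral and applying the binomial theorem (using ${\rm Li}_2(1)=\zeta(2)$), the entire left-hand side collapses to
\[
\int_{0}^{\infty}t^{s-1}\frac{\big(\zeta(2)-{\rm Li}_2(1-e^{-t})\big)^{r-1}}{e^{t}-1}\,dt .
\]

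The crux is to re-express $\zeta(2)-{\rm Li}_2(1-e^{-t})$ with argument $e^{-t}$ rather than $1-e^{-t}$, so that it can be expanded as a Dirichlet series. I claim
\[
\zeta(2)-{\rm Li}_2(1-e^{-t}) = {\rm Li}_2(e^{-t})+t\,{\rm Li}_1(e^{-t}),
\]
which is the dilogarithm reflection formula ${\rm Li}_2(z)+{\rm Li}_2(1-z)=\zeta(2)-\log z\log(1-z)$ evaluated at $z=e^{-t}$ together with ${\rm Li}_1(e^{-t})=-\log(1-e^{-t})$. To keep the argument self-contained I would instead verify it by differentiation: both sides tend to $0$ as $t\to\infty$, and, using Lemma \ref{le:Lidi}(i) together with the displayed derivative formulas following it (which give $\frac{d}{dt}{\rm Li}_2(1-e^{-t})=\frac{t}{e^{t}-1}$, $\frac{d}{dt}{\rm Li}_2(e^{-t})=-{\rm Li}_1(e^{-t})$ and $\frac{d}{dt}{\rm Li}_1(e^{-t})=-1/(e^{t}-1)$), one checks that both sides have derivative $-t/(e^{t}-1)$; hence they coincide. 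Raising this identity to the $(r-1)$-th power and expanding binomially gives
\[
\big(\zeta(2)-{\rm Li}_2(1-e^{-t})\big)^{r-1}=\sum_{j=0}^{r-1}\binom{r-1}{j}t^{j}\,({\rm Li}_2(e^{-t}))^{r-1-j}({\rm Li}_1(e^{-t}))^{j},
\]
so the left-hand side becomes $\sum_{j=0}^{r-1}\binom{r-1}{j}\int_{0}^{\infty}t^{s+j-1}({\rm Li}_2(e^{-t}))^{r-1-j}({\rm Li}_1(e^{-t}))^{j}/(e^{t}-1)\,dt$.

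It then remains to identify each summand with the corresponding term on the right. For this I would apply the Gamma-integral representation directly to the Mordell–Tornheim value: writing $(\text{inner sum})^{-(j+s)}=\frac{1}{\Gamma(j+s)}\int_{0}^{\infty}t^{j+s-1}e^{-t(\text{inner sum})}\,dt$ and summing the independent series $\sum_{a\ge1}e^{-at}=1/(e^{t}-1)$, $\sum_{b\ge1}e^{-bt}/b^{2}={\rm Li}_2(e^{-t})$, $\sum_{c\ge1}e^{-ct}/c={\rm Li}_1(e^{-t})$ coming respectively from the exponent-$0$, exponent-$2$ and exponent-$1$ variables, one obtains
\[
\Gamma(s+j)\,\zeta_{MT}(0,\{2\}^{r-1-j},\{1\}^{j};j+s)=\int_{0}^{\infty}t^{s+j-1}\frac{({\rm Li}_2(e^{-t}))^{r-1-j}({\rm Li}_1(e^{-t}))^{j}}{e^{t}-1}\,dt,
\]
which is exactly the $j$-th summand above; summing over $j$ finishes the identification.

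The routine but essential points are analytic. The termwise identities hold a priori only where the integrals and series converge absolutely: near $t=0$ the integrand behaves like $t^{s+j-2}(\log t)^{j}$, so every piece (the binding case being $j=0$) converges on the half-plane $\Re(s)>1$, and there the interchange of the finite sums and of the Dirichlet series with the integral is justified by absolute convergence. The main obstacle is therefore not the algebra but extending the established identity from $\Re(s)>1$ to all $s\in\mathbb{C}$: I would invoke the meromorphic continuation of $\xi_{MT}$ and of the Mordell–Tornheim functions (Matsumoto's continuation, cf. Remark \ref{re:matrixnotinuas}), note that both sides are meromorphic in $s$, and conclude by the identity theorem.
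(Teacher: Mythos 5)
Your proof is correct. It is worth noting how it sits relative to the paper's treatment: the paper does not prove Theorem \ref{th:IZfrel} directly (it is quoted from Ito), but the same statement is recovered as the $k=0$ case of Theorem \ref{th:AKZofMfre}, whose proof introduces an auxiliary multiple integral $J$ over variables $u_1,\dots,u_{r-1},t$ and evaluates it two ways, using Corollary \ref{co:liint} in the form ${\rm Li}_2(1-e^{-t})=\zeta(2)-\int_0^\infty\frac{t\,du}{e^{t+u}-1}-\int_0^\infty\frac{u\,du}{e^{t+u}-1}$ on one side and the multinomial expansion of $\prod_i(u_i+t)$ together with the multi-dimensional integral representation (\ref{eq:Mintrep}) on the other. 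Your argument is the same decomposition with the $u$-integrals evaluated in closed form: $\int_0^\infty\frac{t\,du}{e^{t+u}-1}=t\,{\rm Li}_1(e^{-t})$ and $\int_0^\infty\frac{u\,du}{e^{t+u}-1}={\rm Li}_2(e^{-t})$, which is exactly your reflection-formula identity $\zeta(2)-{\rm Li}_2(1-e^{-t})={\rm Li}_2(e^{-t})+t\,{\rm Li}_1(e^{-t})$ (your verification by differentiation is sound). This lets you bypass the auxiliary integral $J$ entirely and identify each term of the right-hand side by a single one-dimensional Mellin transform of $({\rm Li}_2(e^{-t}))^{r-1-j}({\rm Li}_1(e^{-t}))^{j}/(e^t-1)$, which is cleaner for this specific theorem. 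The trade-off is that the closed-form evaluation is special to ${\rm Li}_2$; the paper's version, which keeps the $u$-variables live, is the one that generalizes to arbitrary indices (Lemma \ref{le:Liint} and Theorems \ref{th:ItoGfre}, \ref{th:AKZofMGfre}). Your convergence analysis near $t=0$ (the binding case $j=0$ forcing $\Re(s)>1$) and the concluding appeal to meromorphic continuation and the identity theorem match what the paper does.
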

\begin{remark}\label{re:obtainrel}
Ito obtained relations among MT-type MZVs by putting $s=m+1$ in Theorem \ref{th:IZfrel} and using Theorem \ref{th:IZspv} for $\xi_{MT}(\{2\}^{j};m+1)$.
\end{remark}

\section{On an analog of the Arakawa-Kaneko zeta function of Miyagawa-type}\label{se:MtypeAK}
Miyagawa \cite{Miyagawa} defined the multiple zeta funcion (\ref{de:MiyagawaMZF}). We write the Miyagawa-type MZF as follows.
\begin{definition}
For $\mathbf{s}_{r+1} = (s_{r+1,1}, \dots , s_{r+1,n_{r+1}}) \in \mathbb{C}^{n_{r+1}}$, we write
\begin{align*}
&\zeta_{MT} (s_{1},\dots,s_{r};\mathbf{s}_{r+1}) \\
&=\sum_{0<m_{1},\dots,0<m_{r}} \sum_{m_{r+1,1}=1, \dots, m_{r+1,n_{r+1}-1}=1}^{\infty} \\
&\quad\frac{1}{m_{1}^{s_{1}} \cdots m_{r}^{s_{r}} \prod_{u=1}^{n_{r+1}} (\sum_{v=1}^{r} m_{v} + \sum_{w=1}^{u-1}m_{r+1,w})^{s_{r+1,u}}}.
\end{align*}
\end{definition}

\begin{proposition}
The Miyagawa-type MZF $\zeta_{MT}(s_{1},s_{2},\dots,s_{r}; \mathbf{s}_{r+1})$ converges absolutely when
\[\sum_{i=0}^{k} \Re(s_{r+1,n_{r} -i}) > k + 1\]
for any $k=1,\dots,n_{r} - 2$ and
\[\sum_{l=1}^{j}\Re(s_{k_l}) + \sum_{i=0}^{n_{r+1} -1} \Re(s_{r+1,n_{r+1} -i})-n_{r+1}+1 > j\]
with $1 \le k_{1} < k_{2} < \dots < k_{j} \le r$ for any $j=1,2,\dots,r$ are all satisfied.
\end{proposition}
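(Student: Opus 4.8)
The plan is to exploit the two-layer structure of the series: an outer Mordell--Tornheim sum in $m_1,\dots,m_r$ whose ``last denominator'' is replaced by an inner Euler--Zagier tower built on top of $N:=m_1+\dots+m_r$. Since $|m^{-s}|=m^{-\Re(s)}$, it suffices to treat real exponents; write $\sigma_i=\Re(s_i)$ and $\sigma_{r+1,u}=\Re(s_{r+1,u})$, so every term is positive and Tonelli lets me sum in any order. Fixing $m_1,\dots,m_r$ and setting $\ell_u=N+\sum_{w=1}^{u-1}m_{r+1,w}$, the inner sum becomes
\[S(N)=\sum_{N=\ell_1<\ell_2<\dots<\ell_{n_{r+1}}}\ \prod_{u=1}^{n_{r+1}}\ell_u^{-\sigma_{r+1,u}}=N^{-\sigma_{r+1,1}}\sum_{N<\ell_2<\dots<\ell_{n_{r+1}}}\ \prod_{u=2}^{n_{r+1}}\ell_u^{-\sigma_{r+1,u}},\]
a shifted tail of an Euler--Zagier simplex sum, and the whole series equals $\sum_{m_1,\dots,m_r\ge1}m_1^{-\sigma_1}\cdots m_r^{-\sigma_r}\,S(N)$. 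The first family of inequalities will govern $S(N)$ and the second will govern the outer sum.

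The heart of the argument is a uniform tail bound for the inner tower:
\[\sum_{N<\ell_1<\dots<\ell_p}\ell_1^{-b_1}\cdots\ell_p^{-b_p}\ \ll\ N^{\,p-(b_1+\dots+b_p)}\qquad(N\ge1),\]
valid for real $b_1,\dots,b_p$ satisfying the Euler--Zagier conditions $\sum_{i=0}^{k}b_{p-i}>k+1$ $(0\le k\le p-1)$. I would prove this by induction on $p$: carry out the innermost (largest-index) sum using $\sum_{\ell_p>\ell_{p-1}}\ell_p^{-b_p}\ll \ell_{p-1}^{\,1-b_p}$, which needs $b_p>1$; this folds the exponent $b_p-1$ onto $b_{p-1}$ and leaves a length-$(p-1)$ tower whose Euler--Zagier conditions follow from the original ones by a direct telescoping check, so the induction hypothesis applies and the exponent of $N$ comes out as claimed. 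Applying this with $p=n_{r+1}-1$ and $b_u=\sigma_{r+1,u+1}$, and multiplying by the $u=1$ factor $N^{-\sigma_{r+1,1}}$, yields
\[S(N)\ \ll\ N^{-\beta},\qquad \beta:=\sum_{u=1}^{n_{r+1}}\sigma_{r+1,u}-(n_{r+1}-1),\]
uniformly in $N$; the conditions needed here are exactly $\sum_{i=0}^{k}\Re(s_{r+1,n_{r+1}-i})>k+1$ for $0\le k\le n_{r+1}-2$, i.e.\ the first family of inequalities (read so as to include the $k=0$ case).

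With the inner estimate in hand, the outer sum is controlled by comparison with a genuine Mordell--Tornheim series. Since $N=m_1+\dots+m_r$ and $S(N)\ll N^{-\beta}=(m_1+\dots+m_r)^{-\beta}$, the whole series is dominated by
\[C\sum_{m_1,\dots,m_r\ge1}\frac{1}{m_1^{\sigma_1}\cdots m_r^{\sigma_r}\,(m_1+\dots+m_r)^{\beta}}=C\,\zeta_{MT}(\sigma_1,\dots,\sigma_r;\beta).\]
By the Mordell--Tornheim convergence criterion of Okamoto--Onozuka \cite{OkamotoOnozuka}, this converges whenever $\sum_{l=1}^{j}\sigma_{k_l}+\beta>j$ for all $1\le k_1<\dots<k_j\le r$ and all $j=1,\dots,r$; since $\beta=\sum_{i=0}^{n_{r+1}-1}\Re(s_{r+1,n_{r+1}-i})-n_{r+1}+1$, this is precisely the second family of inequalities. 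Combining the two estimates gives absolute convergence throughout the stated region, the case $n_{r+1}=1$ being the bare Mordell--Tornheim case where the first family is vacuous and $\beta=\sigma_{r+1,1}$. I expect the main obstacle to be the uniform tail bound for the Euler--Zagier tower: the induction must produce exactly the exponent $N^{\,p-\sum b_u}$ with no spurious logarithmic factors, which is what forces the strict inequalities---in particular the top-index condition $\Re(s_{r+1,n_{r+1}})>1$ needed to perform the first innermost summation---and one must check carefully that the folded indices keep satisfying the Euler--Zagier conditions at each step so that the induction closes.
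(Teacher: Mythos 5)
Your proposal is correct and follows essentially the same route as the paper: both arguments repeatedly sum the innermost variable of the Euler--Zagier tower via the bound $\sum_{n\ge1}(N+n)^{-\sigma}\le\frac{1}{(\sigma-1)N^{\sigma-1}}$, fold the exponent onto the preceding one, reduce to $\zeta_{MT}(\sigma_1,\dots,\sigma_r;\beta)$ with $\beta=\sum_u\Re(s_{r+1,u})-n_{r+1}+1$, and invoke the Mordell--Tornheim convergence criterion; the only difference is that you package the inner estimate as a standalone uniform tail-bound lemma proved by induction, whereas the paper performs the same folding step by step as a chain of $\ll$ comparisons on the whole series. Your observation that the first family of inequalities must be read to include the $k=0$ case (i.e.\ $\Re(s_{r+1,n_{r+1}})>1$, needed for the very first innermost summation) is consistent with what the paper's own proof actually uses.
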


\begin{proof}
The series $\sum_{n=1}^{\infty} \frac{1}{(N+n)^\sigma}\ (N>0)$ converges only when $\sigma>1$, and
\begin{equation*}
\sum_{n=1}^{\infty} \frac{1}{(N+n)^\sigma} \le \frac{1}{(\sigma - 1)N^{\sigma - 1}}.
\end{equation*}
Let
\[\mathbf{s}^{(k)} = (\Re(s_{r+1,1}), \dots  ,  \Re(s_{r+1,n_{r+1}-k-1}), \sum_{i=0}^{k}\Re(s_{r+1,n_{r+1}-i} )  - k).\]
Then we have
\begin{align*}
&\zeta_{MT}(\Re(s_{1}),\Re(s_{2}),\dots,\Re(s_{r}); \Re(s_{r+1,1}), \dots, \Re(s_{r+1,n_{r+1}})) \\
&\ll \zeta_{MT}(\Re(s_{1}),\Re(s_{2}),\dots,\Re(s_{r}); \mathbf{s}^{(1)}) \\
&\ll \zeta_{MT}(\Re(s_{1}),\Re(s_{2}),\dots,\Re(s_{r}); \mathbf{s}^{(2)}) \\
&\ll\cdots\\
&\ll \zeta_{MT}(\Re(s_{1}),\Re(s_{2}),\dots,\Re(s_{r});\sum_{i=0}^{n_{r+1}-1} \Re(s_{r+1,n_{r+1} -i}) -n_{r+1} + 1),
\end{align*}
where the implicit constants of $\ll$ depend on $\mathbf{s}_{r+1} =(s_{r+1,1}, \dots, s_{r+1,n_{r+1}})$.
By absolute convergence of the MT-type MZF, the assertion of this proposition follows.
\end{proof}
\begin{remark}
By Lemma \ref{le:Aintrep}, we have
\begin{align}
&\zeta_{MT}(0, s_{2},\dots, s_{r};\mathbf{s}_{r+1}) \label{eq:Mintrep} \\
&=\frac{1}{\prod_{i=2}^{r} \Gamma(s_{i})\prod_{j=1}^{n_{r+1}}\Gamma(s_{r+1,j})}\int_{0}^{\infty}\cdots\int_{0}^{\infty} \prod_{i=2}^{r} \frac{t_{i}^{s_{i}-1}\,dt_{i}}{e^{t_{i} + t_{r+1,1} + \dots +t_{r+1,n_{r+1}}} -1} \nonumber \\
&\quad\times\,\prod_{u=1}^{n_{r+1}} \frac{t_{r+1,u}^{s_{r+1,u}-1}\,dt_{r+1,u}}{e^{t_{r+1,u} + \dots + t_{r+1,n_{r+1}}} -1}. \nonumber
\end{align}
This identity is a specialization of identity (\ref{eq:Uintrep}) below. We can obtain the formula (\ref{eq:Mintrep}) as a consequence of identity (\ref{eq:Uintrep}).
\end{remark}

In this section, we discuss the function (\ref{eq:AKZofM}), and obtain certain relations among Miyagawa-type MZVs. Namely we may regard that the argument in this section a Miyagawa-type analog of Ito's work.

In the two subsections in this section, we show a relationship between special values of the function (\ref{eq:AKZofM}) and  Miyagawa-type MZVs, and a relationship between functions (\ref{eq:AKZofM}) and Miyagawa-type MZFs. In conclusion, we obtain certain relations among Miyagawa-type MZVs. Therefore, we may regard the function (\ref{eq:AKZofM}) as an analog of the Arakawa-Kaneko zeta function of Miyagawa-type.

\subsection{Special values}
The special values of the function (\ref{eq:AKZofM}) can be written in terms of Miyagawa-type MZVs as follows.
\begin{theorem}\label{th:AKZofMspv}
For $\mathbf{k} = (k_{1},\dots,k_{n}) \in \mathbb{N}^{n}, \, k_{n+1} \in \mathbb{N}$ and $m \in \mathbb{Z}_{\ge 0}$, we have
\begin{align*}
&\xi_{MT}((\mathbf{k}; {k}_{n+1}); m+1) \\
&=\sum_{a_{1}+\dots+a_{k_{n+1}+1}=m} \frac{1}{a_{k_{n+1}+1}!}\zeta_{MT}(\{1\}^{a_{k_{n+1}+1}},\mathbf{k}; {}_{-}((a_{1}+1, \dots, a_{k_{n+1}}+1, 2)^{*})),
\end{align*}
where the sum is over all $a_{1},\dots,a_{k_{n+1}+1}\in \mathbb{Z}_{\ge 0}$ satisfying $a_{1}+\dots+a_{k_{n+1}+1}=m$.
\end{theorem}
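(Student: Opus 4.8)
The plan is to put $s=m+1$, reduce the defining integral (\ref{eq:AKZofM}) to an explicit multiple series, and then match that series to the Miyagawa sum on the right by a duality/symmetrization argument. First I would use $\Gamma(m+1)=m!$ together with the two elementary identities ${\rm Li}_{\{1\}^m}(1-e^{-t})=t^m/m!$ (from the standard ${\rm Li}_{\{1\}^m}(z)=(-\log(1-z))^m/m!$) and $\frac{dt}{e^t-1}=\frac{dx}{x}$ under the substitution $x=1-e^{-t}$. Together these turn (\ref{eq:AKZofM}) at $s=m+1$ into
\[\xi((\mathbf{k};k_{n+1});m+1)=\int_0^1 {\rm Li}_{\{1\}^m}(x)\,{\rm Li}_{\mathbf{k};k_{n+1}}(x)\,\frac{dx}{x}.\]
Expanding both multiple polylogarithms as power series and integrating term by term (legitimate by absolute convergence on $0\le x<1$) gives the explicit evaluation
\[\xi((\mathbf{k};k_{n+1});m+1)=\sum_{\substack{0<i_1<\dots<i_m\\ p_1,\dots,p_n\ge 1}}\frac{1}{i_1\cdots i_m\,p_1^{k_1}\cdots p_n^{k_n}\,M^{k_{n+1}}(i_m+M)},\qquad M=\sum_{j=1}^n p_j,\]
which I would take as the starting point. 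As a base case and consistency check, when $m=0$ the factor ${\rm Li}_{\{1\}^0}\equiv 1$ collapses the sum to $\zeta_{MT}(\mathbf{k};k_{n+1}+1)$, which also follows at once from the third identity of Corollary~\ref{co:liint}; on the right-hand side $m=0$ forces every $a_i=0$ and ${}_-((\{1\}^{k_{n+1}},2)^*)=(k_{n+1}+1)$, so the two sides agree.

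The substance of the proof is to rewrite this explicit series in Miyagawa form. The harmonic chain $i_1<\dots<i_m$ is coupled to the remaining variables only through the single factor $(i_m+M)$, while $M$ carries the power $k_{n+1}$. I would distribute the weight-$m$ chain among the $k_{n+1}+1$ levels determined by the factors $M,\dots,M,(i_m+M)$, the first $k_{n+1}$ coming from $M^{k_{n+1}}$: writing each reciprocal through a nested integral representation of the type $\frac{1}{N^a}=\frac{1}{(a-1)!}\int_0^1 x^{N-1}(-\log x)^{a-1}\,dx$ and expanding the resulting logarithms produces a sum over compositions $a_1+\dots+a_{k_{n+1}+1}=m$, where $a_u$ is the number of units inserted at level $u$. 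The top level, carrying the $a_{k_{n+1}+1}$ units attached to the variables that become the $\{1\}^{a_{k_{n+1}+1}}$ entries, is symmetrized by the same identity $(-\log(1-x))^a=a!\,{\rm Li}_{\{1\}^a}(x)$ used above; this is exactly what creates the factor $1/a_{k_{n+1}+1}!$. The nested partial-sum exponents left at the lower levels then assemble into the index ${}_-((a_1+1,\dots,a_{k_{n+1}}+1,2)^*)$, the ``$2$'' recording the extra $(i_m+M)$ factor and the ${}_-$ recording the lowered base exponent.

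The main obstacle is precisely this last step: proving that the nested-sum index produced by distributing the $m$ units across the $k_{n+1}+1$ levels is the dual composition ${}_-((a_1+1,\dots,a_{k_{n+1}}+1,2)^*)$. I would isolate it as a self-contained combinatorial identity and prove it by induction on $k_{n+1}$, using the recursion $\frac{d}{dz}{\rm Li}_{\mathbf{k};k_{n+1}}(z)=\frac1z{\rm Li}_{\mathbf{k};k_{n+1}-1}(z)$ of Lemma~\ref{le:Lidi}(ii) to peel off one level at a time, with the symmetrization identity handling the innermost $\{1\}$-block at the base of the induction. This argument parallels the special-value computations of Ito (Theorem~\ref{th:IZspv}) and Kaneko--Tsumura, the new feature being the diagonal factor $M^{k_{n+1}}$, which is what forces the composition to have $k_{n+1}+1$ parts; it is the only place where the duality operation enters in an essential way, and verifying that the bookkeeping is exact (including the convergence needed to justify interchanging the term-by-term integration and the partial-fraction expansions) is where the real work lies.
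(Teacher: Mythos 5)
Your reduction of $\xi_{MT}((\mathbf{k};k_{n+1});m+1)$ to $\int_0^1 {\rm Li}_{\{1\}^m}(x)\,{\rm Li}_{\mathbf{k};k_{n+1}}(x)\,dx/x$, the explicit double series that follows, and the $m=0$ consistency check are all correct, and your overall plan (distribute the $m$ harmonic units over the $k_{n+1}+1$ levels of the nested structure, then recognize each resulting piece as a Miyagawa-type MZV with a dual index) is in substance the same strategy the paper uses; it is essentially the Yamamoto-integral argument sketched in the remark following Theorem \ref{th:AKZofMspv}.

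The problem is that the decisive step is exactly the one you defer. Proving that each term produced by the distribution equals $\zeta_{MT}(\{1\}^{a_{k_{n+1}+1}},\mathbf{k};{}_-((a_1+1,\dots,a_{k_{n+1}}+1,2)^*))$ is the entire content of the theorem beyond bookkeeping, and the paper isolates it as Lemma \ref{le:Mzetaint}: for an admissible $\mathbf{k}_{r+1}$ with $(({}_+\mathbf{k}_{r+1})^*)_-=(l_1,\dots,l_d)$, the value $\zeta_{MT}(k_1,\dots,k_r;\mathbf{k}_{r+1})$ equals $\frac{1}{\prod_i\Gamma(l_i)}\int\cdots\int\bigl(\prod_{i=1}^{d}t_i^{l_i-1}/(e^{t_i+\cdots+t_d}-1)\bigr)\prod_i{\rm Li}_{k_i}(1-e^{-t_d})\,dt_1\cdots dt_d$, proved by writing $\mathbf{k}_{r+1}$ in block form and running the Kaneko--Tsumura computation (equation (\ref{eq:liintKTv})). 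Your proposed substitute, an induction on $k_{n+1}$ peeling off one level via Lemma \ref{le:Lidi}(ii), is unlikely to close as stated: the dual index ${}_-((a_1+1,\dots,a_{k_{n+1}}+1,2)^*)$ does not decompose entry-by-entry when one $a_u$ is removed, since duality acts globally on the block structure of the composition, so the inductive hypothesis for $k_{n+1}-1$ does not hand you the index required at step $k_{n+1}$; you would in effect have to prove the full statement of Lemma \ref{le:Mzetaint} anyway. Two further cautions. First, the paper's main proof avoids your series-level partial-fraction manipulations entirely by staying in the $t$-variables, where $(t_1+\cdots+t_{k_{n+1}+1})^m$ is expanded by the multinomial theorem and ${\rm Li}_1(1-e^{-t})=t$ converts the units landing on the \emph{innermost} variable (the one adjacent to $\prod_i{\rm Li}_{k_i}$) into the $\{1\}^{a_{k_{n+1}+1}}$ block; your description attaches that block to the top $(i_m+M)$ level, which has the orientation reversed. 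Second, the factor $1/a_{k_{n+1}+1}!$ is what survives of the multinomial coefficient $m!/(a_1!\cdots a_{k_{n+1}+1}!)$ after the $a_u!$ for $u\le k_{n+1}$ are absorbed into the $\Gamma(l_i)$ of Lemma \ref{le:Mzetaint}, rather than arising from a separate symmetrization; getting this accounting right is part of the step you have left open.
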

To prove this theorem, we need the following lemma.
\begin{lemma}\label{le:Mzetaint}
Let $\mathbf{k}_{r+1} = (k_{r+1,1},\dots, k_{r+1,n_{r+1}}) \in \mathbb{N}^{n_{r+1}}${\rm (}$n_{r+1} = 1$ {\rm or}  $k_{r+1,n_{r+1}} \ge 2${\rm )} and $(({}_{+} \mathbf{k}_{r+1})^{*})_{-} = (l_{1}, \dots, l_{d})$. We have
\begin{align*}
&\zeta_{MT}(k_{1},\dots,k_{r};\mathbf{k}_{r+1}) \\
&=\frac{1}{\prod_{i=1}^{d}\Gamma(l_{i})}\int_{0}^{\infty}\cdots\int_{0}^{\infty}\left(\prod_{i=1}^{d}\frac{t_{i}^{l_{i}-1}}{e^{t_{i}+\dots+t_{d}}-1}\right) \prod_{i=1}^{r}{\rm Li}_{k_{i}}(1-e^{-t_{d}})\,dt_{1} \cdots dt_{d}.
\end{align*}
\end{lemma}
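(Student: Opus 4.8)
The goal of Lemma \ref{le:Mzetaint} is to give an iterated-integral representation of the Miyagawa-type MZF $\zeta_{MT}(k_1,\dots,k_r;\mathbf{k}_{r+1})$ in which the outer indices $k_1,\dots,k_r$ are packaged into a product of single polylogarithms $\prod_{i=1}^r {\rm Li}_{k_i}(1-e^{-t_d})$, while the tail index $\mathbf{k}_{r+1}$ is converted into a Gamma-function integral whose exponents are dictated by the transformed index $(({}_+\mathbf{k}_{r+1})^*)_- = (l_1,\dots,l_d)$. The plan is to start from the matrix integral representation of Lemma \ref{le:Aintrep} and reorganize the $t_d$-variable structure so that the summation over $m_1,\dots,m_r$ collapses into polylogarithms.

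First I would recall the definition of $\zeta_{MT}(k_1,\dots,k_r;\mathbf{k}_{r+1})$ and observe that, in the notation of Lemma \ref{le:Aintrep}, it is exactly a $\zeta(\mathbf{s};A)$ for a suitable non-negative matrix $A$: the variables are $m_1,\dots,m_r$ together with the inner summation variables $m_{r+1,1},\dots,m_{r+1,n_{r+1}-1}$, and the linear forms appearing in the denominator are the partial sums $\sum_{v=1}^r m_v + \sum_{w=1}^{u-1} m_{r+1,w}$. Applying formula (\ref{eq:Aintrep}) turns this into a multiple integral, one integration variable per linear form, with a factor $\prod_i 1/(e^{(\cdot)}-1)$ and the usual $t^{s-1}$ weights. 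The crucial structural point is that each of $m_1,\dots,m_r$ appears in \emph{every} one of the tail linear forms with coefficient $1$, so after transposing $A$ these variables all see the \emph{same} combined integration variable, which is the sum $t_d = t_{r+1,1}+\dots+t_{r+1,n_{r+1}}$ of the tail integration variables (hence the single $e^{t_d}$-type denominators factor out). This is what allows the $m_1,\dots,m_r$ sums to be resummed independently into $\prod_{i=1}^r {\rm Li}_{k_i}(1-e^{-t_d})$, using the Gamma integral $m^{-k}=\frac{1}{\Gamma(k)}\int_0^\infty t^{k-1}e^{-mt}\,dt$ together with $\sum_{m\ge 1}(1-e^{-t_d})^m m^{-k}/\cdots = {\rm Li}_k(1-e^{-t_d})$.

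The main obstacle, and where I would spend the most care, is bookkeeping the index transformation $(({}_+\mathbf{k}_{r+1})^*)_- = (l_1,\dots,l_d)$ and matching it to the exponents $l_i-1$ and the nested denominators $e^{t_i+\dots+t_d}-1$ in the claimed integral. This is the standard duality/stuffle--to--iterated-integral combinatorics: the tail index $\mathbf{k}_{r+1}$ with its partial-sum structure produces, after introducing the Gamma integrals and changing variables to the successive differences, an iterated integral over the simplex $t_1,\dots,t_d$ whose shape is exactly the Hoffman-dual index; the operations ${}_+(\cdot)$, $(\cdot)^*$, and $(\cdot)_-$ precisely account for the shift coming from the extra $1/(e^{t}-1)$ kernel and the boundary index. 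I would verify this correspondence by induction on $n_{r+1}$, or by directly checking it on the two cases $n_{r+1}=1$ and $n_{r+1}\ge 2$ with $k_{r+1,n_{r+1}}\ge 2$ separated out in the hypothesis. The absolute convergence needed to interchange summation and integration, and to factor the product of polylogarithms out of the sum, follows from the convergence hypotheses already recorded for these zeta functions, so that step is routine once the index matching is settled.
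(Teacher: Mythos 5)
Your starting point---Lemma \ref{le:Aintrep} applied to $\zeta_{MT}(k_{1},\dots,k_{r};\mathbf{k}_{r+1})$ viewed as a $\zeta(\mathbf{s};A)$---does not lead to the representation claimed in the lemma, and the step where you resum the $m_{1},\dots,m_{r}$ sums into $\prod_{i=1}^{r}{\rm Li}_{k_{i}}(1-e^{-t_{d}})$ is where the argument breaks. From the Gamma integral $m^{-k}=\frac{1}{\Gamma(k)}\int_{0}^{\infty}t^{k-1}e^{-mt}\,dt$ the summand in $m_{i}$ is $(e^{-T})^{m_{i}}m_{i}^{-k_{i}}$ with $T=t_{r+1,1}+\dots+t_{r+1,n_{r+1}}$, so the resummation produces ${\rm Li}_{k_{i}}(e^{-T})$, not ${\rm Li}_{k_{i}}(1-e^{-t_{d}})$; there is no factor $(1-e^{-t_{d}})^{m_{i}}$ anywhere in the zeta series to produce the latter. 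What this route actually yields is the analogue of the paper's formulas (\ref{eq:Mintrep}) and (\ref{eq:Uintrep}): an integral over the $n_{r+1}$ tail variables with exponents $k_{r+1,u}-1$ and head factors ${\rm Li}_{k_{i}}(e^{-T})$. The lemma's integral instead has $d=|\mathbf{k}_{r+1}|+1-n_{r+1}$ variables, exponents $l_{i}-1$ read off from the dual index $(({}_{+}\mathbf{k}_{r+1})^{*})_{-}$, and head factors at argument $1-e^{-t_{d}}$---a genuinely different object, and passing from one form to the other is the duality statement itself, not bookkeeping that can be deferred.

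The missing idea is the one the paper actually uses: express the multi-polylogarithm ${\rm Li}_{k_{1},\dots,k_{r};\mathbf{k}_{r+1}}(1-e^{-z})$ as an iterated integral over a simplex $0<t_{1}<\dots<t_{b_{0}+\dots+b_{h}}<z$ by repeatedly integrating the differential relations of Lemma \ref{le:Lidi} (the method of equation (2.9) of Kaneko--Tsumura), with the product $\prod_{i}{\rm Li}_{k_{i}}(1-e^{-t_{1}})$ sitting at the innermost variable and kernels $dt/(e^{t}-1)$ and $(t_{j+1}-t_{j})^{a}/a!$ encoding the decomposition $\mathbf{k}_{r+1}=(b_{0},\{1\}^{a_{1}-1},b_{1}+1,\dots,\{1\}^{a_{h}-1},b_{h}+1)$. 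Letting $z\to\infty$ and changing to difference variables then gives exactly the stated integral, the dual index appearing because the iterated integral is read in reverse order. If you wish to keep your plan, you would still have to prove the equality of the two integral representations (the $e^{-T}$ one and the $1-e^{-t_{d}}$ one), which amounts to redoing this simplex argument; as written, your proposal asserts the conclusion of that step rather than proving it.
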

\begin{proof}
Let $z \in \mathbb{R}_{\ge 0}$ and $\mathbf{k}_{r+1}=(b_{0},\{1\}^{a_{1}-1},b_{1}+1,\dots,\{1\}^{a_{h}-1},b_{h}+1)$
with $b_{0},a_{i},b_{i} \in \mathbb{N}\ (1 \le i \le h)$.
By using the same method as in the proof of equation $(2.9)$ in Kaneko-Tsumura \cite{KanekoTsumura}, we have
\begin{align}
&{\rm Li}_{k_{1},\dots,k_{r};\mathbf{k}_{r+1}}(1-e^{-z}) \label{eq:liintKTv}\\
&=\int_{0<t_2<\cdots<t_{b_0+\dots+b_h}<z} \nonumber\\
&\quad\left(\prod_{i=1}^{b_{h}}\frac{1}{e^{t_{b_{0}+\dots+b_{h-1}+i}} - 1}\right)
\frac{1}{a_h!}(t_{b_{0}+\dots+b_{h-1}+1}-t_{b_{0}+\dots+b_{h-1}})^{a_h}\nonumber \\
&\quad\times \left(\prod_{i=1}^{b_{h-1}}\frac{1}{e^{t_{b_{0}+\dots+b_{h-2} + i}}-1}\right)\frac{1}{a_{h-1}!}(t_{b_{0}+\dots+b_{h-2}+1} -t_{b_{0}+\dots+b_{h-2}})^{a_{h-1}}\cdots \nonumber \\
&\quad\cdots \times\left(\prod_{i=1}^{b_{1}}\frac{1}{e^{t_{b_{0}+i}}-1}\right)
\frac{1}{a_{1}!}(t_{b_{0}+1}-t_{b_{0}})^{a_{1}} \nonumber \\
&\quad\times \left(\prod_{i=1}^{b_{0}}\frac{1}{e^{t_{i}}-1}\right)\prod_{i=1}^{r} {\rm Li}_{k_{i}}(1-e^{-t_{1}})\,dt_{1} \cdots dt_{b_0+\dots+b_h}. \nonumber
\end{align}
Since 
\[(({}_{+}\mathbf{k}_{n_{r+1}})^{*})_{-}=(\{1\}^{b_{h}-1},a_{h}+1,\dots,\{1\}^{b_{1}-1},a_{1}+1,\{1\}^{b_{0}})=(l_{1},\dots,l_{d}),\] by taking the limit as $z$ tends to infinity, we obtain
\begin{align*}
&\zeta_{MT}(k_{1},\dots,k_{r};k_{r+1,1}, \dots, k_{r+1,n_{r+1}})\\
&=\int_{0<t_2<\cdots<t_{b_0+\dots+b_h}<\infty} \nonumber\\
&\quad\left(\prod_{i=1}^{b_{h}}\frac{1}{e^{t_{b_{0}+\dots+b_{h-1}+i}} - 1}\right)
\frac{1}{a_h!}(t_{b_{0}+\dots+b_{h-1}+1}-t_{b_{0}+\dots+b_{h-1}})^{a_h}\nonumber \\
&\quad\times \left(\prod_{i=1}^{b_{h-1}}\frac{1}{e^{t_{b_{0}+\dots+b_{h-2} + i}}-1}\right)\frac{1}{a_{h-1}!}(t_{b_{0}+\dots+b_{h-2}+1} -t_{b_{0}+\dots+b_{h-2}})^{a_{h-1}}\cdots \nonumber \\
&\quad\dots \times\left(\prod_{i=1}^{b_{1}}\frac{1}{e^{t_{b_{0}+i}}-1}\right)
\frac{1}{a_{1}!}(t_{b_{0}+1}-t_{b_{0}})^{a_{1}} \nonumber \\
&\quad\times \left(\prod_{i=1}^{b_{0}}\frac{1}{e^{t_{i}}-1}\right)\prod_{i=1}^{r} {\rm Li}_{k_{i}}(1-e^{-t_{1}})\,dt_{1} \cdots dt_{b_0+\dots+b_h} \nonumber\\
&=\frac{1}{a_{1}!}\cdots\frac{1}{a_{h}!}\int_{0}^{\infty}\cdots\int_{0}^{\infty} \left(\prod_{i=1}^{b_{h}}\frac{1}{e^{t_{b_{0}+\dots+b_{h-1}+i}+\dots+t_{1}} - 1}\right)
t_{b_{0}+\dots+b_{h-1}+1}^{a_h} \\
&\quad\times  \left(\prod_{i=1}^{b_{h-1}}\frac{1}{e^{t_{b_{0}+\dots+b_{h-2} + i}+\dots+t_{1}}-1}\right)t_{b_{0}+\dots+b_{h-2}+1}^{a_{h-1}} \cdots \\
&\quad\dots \times\left(\prod_{i=1}^{b_{1}}\frac{1}{e^{t_{b_{0}+i}+\dots+t_{1}}-1}\right)
t_{b_{0}+1}^{a_{1}} \\
&\quad\times\left(\prod_{i=1}^{b_{0}}\frac{1}{e^{t_{i}+\dots+t_{1}}-1}\right) \prod_{i=1}^{r}{\rm Li}_{k_{i}}(1-e^{-t_{1}})\,dt_{1} \cdots dt_{t_{b_0+\dots+b_h}} \\
&=\frac{1}{\prod_{i=1}^{d}\Gamma(l_{i})}\int_{0}^{\infty}\cdots\int_{0}^{\infty}\left(\prod_{i=1}^{d}\frac{t_{i}^{l_{d+1-i}-1}}{e^{t_{i}+\dots+t_{1}}-1}\right) \prod_{i=1}^{r}{\rm Li}_{k_{i}}(1-e^{-t_{1}})\,dt_{1} \cdots dt_{d} \\
&=\frac{1}{\prod_{i=1}^{d}\Gamma(l_{i})}\int_{0}^{\infty}\cdots\int_{0}^{\infty}\left(\prod_{i=1}^{d}\frac{t_{i}^{l_{i}-1}}{e^{t_{i}+\dots+t_{d}}-1}\right) \prod_{i=1}^{r}{\rm Li}_{k_{i}}(1-e^{-t_{d}})\,dt_{1} \cdots dt_{d}.
\end{align*}
This completes the proof.
\end{proof}
\begin{proof}[Proof of Theorem \ref{th:AKZofMspv}]
For $\Re (s) > 0$, by using the case $n_{r+1}=1$ of the equation (\ref{eq:liintKTv}), we have
\begin{align*}
&\Gamma(s)\xi_{MT}((\mathbf{k};{k}_{n+1});s) \\
&=\int_{0}^{\infty} \frac{t^{s-1}}{e^{t}-1} \int_{0<t_{1}<\dots<t_{k_{n+1}}<t} \left(\prod_{i=1}^{k_{n+1}}\frac{1}{e^{t_{i}}-1}\right) \prod_{i=1}^{n} {\rm Li}_{k_{i}}(1-e^{-t_{1}}) \,dt_{1} \cdots dt_{k_{n+1}} dt \\
&=\int_{0}^{\infty} \cdots \int_{0}^{\infty} \frac{(t_{k_{n+1}+1} + \dots + t_{1})^{s-1}}{e^{t_{k_{n+1}+1} + \dots +t_{1}} -1} \left(\prod_{i=1}^{k_{n+1}}\frac{1}{e^{t_{i}+\dots+t_{1}}-1}\right)\\
&\quad\times\prod_{i=1}^{n} {\rm Li}_{k_{i}}(1-e^{-t_{1}}) \,dt_{1} \cdots dt_{k_{n+1}} dt_{k_{n+1}+1} \\
&=\int_{0}^{\infty} \cdots \int_{0}^{\infty} \frac{(t_{1} + \dots + t_{k_{n+1}+1})^{s-1}}{e^{t_{1} + \dots +t_{k_{n+1}+1}} -1} \left(\prod_{i=2}^{k_{n+1}+1}\frac{1}{e^{t_{i}+\dots+t_{k_{n+1}+1}}-1}\right)\\
&\quad\times\prod_{i=1}^{n} {\rm Li}_{k_{i}}(1-e^{-t_{k_{n+1}+1}}) \,dt_{1} \cdots dt_{k_{n+1}} dt_{k_{n+1}+1}.
\end{align*}
By putting $s=m+1$ in this equation and using the formula ${\rm Li}_{1}(1-e^{-t}) = t$ and  Lemma \ref{le:Mzetaint}, we have
\begin{align*}
&m!\xi_{MT}((\mathbf{k};{k}_{n+1});m+1) \\
&=\int_{0}^{\infty} \cdots \int_{0}^{\infty} (t_{1} + \dots + t_{k_{n+1}+1})^{m} \left(\prod_{i=1}^{k_{n+1}+1}\frac{1}{e^{t_{i}+\dots+t_{k_{n+1}+1}}-1}\right)\\
&\quad\times\prod_{i=1}^{n} {\rm Li}_{k_{i}}(1-e^{-t_{k_{n+1}+1}}) \,dt_{1} \cdots dt_{k_{n+1}} dt_{k_{n+1}+1}\\
&=\sum_{a_{1}+\dots+a_{k_{n+1}+1}=m} \frac{m!}{a_{1}! \cdots a_{k_{n+1}+1}!} \\
&\quad\times\int_{0}^{\infty} \cdots  \int_{0}^{\infty} t_{1}^{a_{1}}  \cdots  t_{k_{n+1}}^{a_{k_{n+1}}} {\Bigl( {\rm Li}_{1}(1  -  e^{-t_{k_{n+1}+1}}) \Bigr)}^{a_{k_{n+1}+1}} \\
&\quad\times\left(\prod_{i=1}^{k_{n+1}+1}\frac{1}{e^{t_{i}+\dots+t_{k_{n+1}+1}}-1}\right)\prod_{i=1}^{n} {\rm Li}_{k_{i}}(1-e^{-t_{k_{n+1}+1}}) \,dt_{1} \cdots dt_{k_{n+1}} dt_{k_{n+1}+1}\\
&=\sum_{a_{1}+\dots+a_{k_{n+1}+1}=m} \frac{m!}{a_{k_{n+1}+1}!} \zeta(\{1\}^{a_{k_{n+1}+1}} , \mathbf{k};{}_{-}((a_{1}+1,\dots,a_{k_{n+1}}+1,2)^{*})).
\end{align*}
This completes the proof.
\end{proof}
\begin{remark}
We can also prove Theorem \ref{th:AKZofMspv} by using the Yamamoto-integral defined by Yamamoto \cite{Yamamoto}. This method is more intuitive. Here, we use the notation in \cite{Yamamoto}. Since
\begin{align*}
m!\xi_{MT}((\mathbf{k}; {k}_{n+1}); m+1) &=\int_{0}^{\infty} \frac{\bigl({\rm Li}_{1}(1-e^{-t})\bigr)^{m}}{e^{t}-1} {\rm Li}_{\mathbf{k};k_{n+1}} (1-e^{-t}) \,dt \\
&= \int_{0}^{1} \frac{\bigl({\rm Li}_{1} (t) \bigr)^{m}}{t} {\rm Li}_{\mathbf{k};k_{n+1}} (t) \,dt,
\end{align*}
by using the Yamamoto-integral, we have
\begin{align*}
&m!\xi_{MT}((\mathbf{k}; {k}_{n+1}); m+1) \\
&= I\left(
\begin{tikzpicture}[thick, baseline=-18.5mm]
{
\Whitecircle (E0) at (0,0) {};
\Blackcircle (E0_0_0_0) at ($(E0) + ( - 1 * \BRANCH_WIDTH*0.5, -\LINE_DISTANCE)$) {};
\Blackcircle (E0_0_0_1) at ($(E0) + ( 1 * \BRANCH_WIDTH*0.5, -\LINE_DISTANCE)$) {};
\draw [line width = \LINEWIDTH] (E0) -- (E0_0_0_0) (E0) -- (E0_0_0_1);
\tdot (d0_1) at ($(E0_0_0_0) + (\DOTSBRANK*0.5, \DOTSPOSITION)$) {} ;
\tdot (d0_3) at ($(E0_0_0_1) + (- \DOTSBRANK*0.5, \DOTSPOSITION)$){};
\tdot (d0_2) at ($(d0_1)!.5!(d0_3)$){};
\Whitecircle (E0_0_0) at ($(E0) + ( - 1 * \BRANCH_WIDTH, -\LINE_DISTANCE)$) {};
\draw [line width = \LINEWIDTH] (E0) -- (E0_0_0);
\Whitecircle (E0_0_1) at ($(E0_0_0) + (0, -\DOT_DISTANCE)$) {};
\drawvdots{E0_0_0}{E0_0_1};
\def\branch{E0_0_1};
\Whitecircle (E1_0_0) at ($(\branch) + ( - 1 * \BRANCH_WIDTH, -\LINE_DISTANCE)$) {};
\draw  [line width = \LINEWIDTH](\branch) -- (E1_0_0);
\Whitecircle (E1_0_1) at ($(E1_0_0) + (0, - \DOT_DISTANCE)$) {};
\drawvdots{E1_0_0}{E1_0_1}
\Blackcircle (E1_0_2) at ($(E1_0_1) + (0, - \LINE_DISTANCE)$) {};
\draw [line width = \LINEWIDTH] (E1_0_1) -- (E1_0_2);
\Whitecircle (E3_0_0) at ($(\branch) + (2 * \BRANCH_WIDTH - 1 * \BRANCH_WIDTH, -\LINE_DISTANCE)$) {};
\draw [line width = \LINEWIDTH] (\branch) -- (E3_0_0);
\Whitecircle (E3_0_1) at ($(E3_0_0) + (0, - \DOT_DISTANCE)$) {};
\drawvdots{E3_0_0}{E3_0_1}
\Blackcircle (E3_0_2) at ($(E3_0_1) + (0, - \LINE_DISTANCE)$) {};
\draw [line width = \LINEWIDTH] (E3_0_1) -- (E3_0_2);
\Whitecircle (E2_0_0) at ($(E1_0_0)!\SECOND_EDGE_PROPOTION!(E3_0_0)$) {};
\draw  [line width = \LINEWIDTH] (\branch) -- (E2_0_0);
\Whitecircle (E2_0_1) at ($(E2_0_0) + (0, - \DOT_DISTANCE)$) {};
\drawvdots{E2_0_0}{E2_0_1}
\Blackcircle (E2_0_2) at ($(E2_0_1) + (0, - \LINE_DISTANCE)$) {};
\draw [line width = \LINEWIDTH] (E2_0_1) -- (E2_0_2);
\draw[decorate, decoration={brace, mirror, amplitude = \BRACEAMPLITUDE}]
($(E0_0_0_0) + (- \BRACEVSPACE, - \BRACEHSPACE)$) -- ($(E0_0_0_1) + ( \BRACEVSPACE,  -\BRACEHSPACE) $) node[xshift = 0mm, yshift = -\BRACEXSIFT*0.6, pos = 0.5] {\INDEX_SIZE$m$};
\draw[decorate, decoration={brace, mirror, amplitude = \BRACEAMPLITUDE}]
($(E0_0_0) + (- \BRACEVSPACE, \BRACEHSPACE)$) -- ($(E0_0_1) + (- \BRACEVSPACE, - \BRACEHSPACE) $) node[xshift = -\BRACEXSIFT, yshift = 0mm, pos = 0.5] {\INDEX_SIZE$k_{n+1}$};
\draw[decorate, decoration={brace, mirror, amplitude = \BRACEAMPLITUDE}]
($(E1_0_0) + ( - \BRACEVSPACE, \BRACEHSPACE)$) -- ($(E1_0_1) + ( - \BRACEVSPACE, - \BRACEHSPACE) $) node[xshift = - \BRACEXSIFT, yshift = 0mm, pos = 0.5] {\INDEX_SIZE$k_{1} - 1$};
\draw[decorate, decoration={brace, amplitude = \BRACEAMPLITUDE}]
($(E2_0_0) + (\BRACEVSPACE, \BRACEHSPACE)$) -- ($(E2_0_1) + (\BRACEVSPACE, - \BRACEHSPACE) $) node[xshift = \BRACEXSIFT, yshift = 0mm, pos = 0.5] {\INDEX_SIZE$k_{2} - 1$};
\draw[decorate, decoration={brace, amplitude = \BRACEAMPLITUDE}]
($(E3_0_0) + (\BRACEVSPACE, \BRACEHSPACE)$) -- ($(E3_0_1) + (\BRACEVSPACE, - \BRACEHSPACE) $) node[xshift = \BRACEXSIFT, yshift = 0mm, pos = 0.5] {\INDEX_SIZE$k_{n} - 1$};
\tdot (d1) at ($(E2_0_1)!.5!(E2_0_2) + (\DOTSBRANK, \DOTSPOSITION)$) {} ;
\tdot (d3) at ($(E3_0_1)!.5!(E3_0_2) + (- \DOTSBRANK, \DOTSPOSITION)$){};
\tdot (d2) at ($(d1)!.5!(d3)$){};
}\end{tikzpicture}
\right) \\
&= \sum_{a_{1}+\dots+a_{k_{n+1}+1}=m}\frac{m!}{a_{k_{n+1}+1}!}I\left(
\begin{tikzpicture}[thick, baseline=-35.5mm]
{
\Whitecircle (E0_0_0) at (0,0) {};
\Blackcircle (E0_0_1) at ($(E0_0_0) + (0, -\LINE_DISTANCE)$) {};
\draw  [line width = \LINEWIDTH](E0_0_0) -- (E0_0_1);
\Blackcircle (E0_0_2) at ($(E0_0_1) + (0, -\DOT_DISTANCE)$) {};
\drawvdots{E0_0_1}{E0_0_2};
\Whitecircle (E0_0_3) at ($(E0_0_2) + (0, -\LINE_DISTANCE)$) {};
\draw  [line width = \LINEWIDTH](E0_0_2) -- (E0_0_3);
\Whitecircle (E0_0_4) at ($(E0_0_3) + (0, -\DOT_DISTANCE)$) {};
\drawvdots{E0_0_3}{E0_0_4};
\Blackcircle (E0_0_5) at ($(E0_0_4) + (0, -\LINE_DISTANCE)$) {};
\draw  [line width = \LINEWIDTH](E0_0_4) -- (E0_0_5);
\Blackcircle (E0_0_6) at ($(E0_0_5) + (0, -\DOT_DISTANCE)$) {};
\drawvdots{E0_0_5}{E0_0_6};
\Whitecircle (E0_0_7) at ($(E0_0_6) + (0, -\LINE_DISTANCE)$) {};
\draw  [line width = \LINEWIDTH](E0_0_6) -- (E0_0_7);
\def\branch{E0_0_7};
\Whitecircle (E1_0_0) at ($(\branch) + ( -1.5 *  \BRANCH_WIDTH, -\LINE_DISTANCE)$) {};
\draw  [line width = \LINEWIDTH](\branch) -- (E1_0_0);
\Whitecircle (E1_0_1) at ($(E1_0_0) + (0, - \DOT_DISTANCE)$) {};
\drawvdots{E1_0_0}{E1_0_1}
\Blackcircle (E1_0_2) at ($(E1_0_1) + (0, - \LINE_DISTANCE)$) {};
\draw [line width = \LINEWIDTH] (E1_0_1) -- (E1_0_2);
\Whitecircle (E3_0_0) at ($(\branch) + (0.5 * \BRANCH_WIDTH, -\LINE_DISTANCE)$) {};
\draw [line width = \LINEWIDTH] (\branch) -- (E3_0_0);
\Whitecircle (E3_0_1) at ($(E3_0_0) + (0, - \DOT_DISTANCE)$) {};
\drawvdots{E3_0_0}{E3_0_1}
\Blackcircle (E3_0_2) at ($(E3_0_1) + (0, - \LINE_DISTANCE)$) {};
\draw [line width = \LINEWIDTH] (E3_0_1) -- (E3_0_2);
\Whitecircle (E2_0_0) at ($(E1_0_0)!\SECOND_EDGE_PROPOTION!(E3_0_0)$) {};
\draw  [line width = \LINEWIDTH] (\branch) -- (E2_0_0);
\Whitecircle (E2_0_1) at ($(E2_0_0) + (0, - \DOT_DISTANCE)$) {};
\drawvdots{E2_0_0}{E2_0_1}
\Blackcircle (E2_0_2) at ($(E2_0_1) + (0, - \LINE_DISTANCE)$) {};
\draw [line width = \LINEWIDTH] (E2_0_1) -- (E2_0_2);
\Blackcircle (E4_0_0) at ($(E3_0_0) + (\BRANCH_WIDTH, 0)$) {};
\draw  [line width = \LINEWIDTH] (\branch) -- (E4_0_0);
\Blackcircle (E5_0_0) at ($(E4_0_0) + (\BRANCH_WIDTH, 0)$) {};
\draw  [line width = \LINEWIDTH] (\branch) -- (E5_0_0);
\tdot (d0_1) at ($(E4_0_0) + (\DOTSBRANK*0.5, \DOTSPOSITION-0.05 )$) {} ;
\tdot (d0_3) at ($(E5_0_0) + (- \DOTSBRANK*0.5, \DOTSPOSITION-0.05)$){};
\tdot (d0_2) at ($(d0_1)!.5!(d0_3)$){};
\draw[decorate, decoration={brace, mirror, amplitude = \BRACEAMPLITUDE}]
($(E0_0_1) + (- \BRACEVSPACE, \BRACEHSPACE)$) -- ($(E0_0_2) + (- \BRACEVSPACE, - \BRACEHSPACE) $) node[xshift = -\BRACEXSIFT, yshift = 0mm, pos = 0.5] {\INDEX_SIZE$a_{1}$};
\draw[decorate, decoration={brace, mirror, amplitude = \BRACEAMPLITUDE}]
($(E0_0_5) + (- \BRACEVSPACE, \BRACEHSPACE)$) -- ($(E0_0_6) + (- \BRACEVSPACE, - \BRACEHSPACE) $) node[xshift = -\BRACEXSIFT, yshift = 0mm, pos = 0.5] {\INDEX_SIZE$a_{k_{n+1}}$};
\draw[decorate, decoration={brace, mirror, amplitude = \BRACEAMPLITUDE}]
($(E1_0_0) + ( - \BRACEVSPACE, \BRACEHSPACE)$) -- ($(E1_0_1) + ( - \BRACEVSPACE, - \BRACEHSPACE) $) node[xshift = - \BRACEXSIFT, yshift = 0mm, pos = 0.5] {\INDEX_SIZE$k_{1} - 1$};
\draw[decorate, decoration={brace, amplitude = \BRACEAMPLITUDE}]
($(E2_0_0) + (\BRACEVSPACE, \BRACEHSPACE)$) -- ($(E2_0_1) + (\BRACEVSPACE, - \BRACEHSPACE) $) node[xshift = \BRACEXSIFT, yshift = 0mm, pos = 0.5] {\INDEX_SIZE$k_{2} - 1$};
\draw[decorate, decoration={brace, amplitude = \BRACEAMPLITUDE}]
($(E3_0_0) + (\BRACEVSPACE, \BRACEHSPACE)$) -- ($(E3_0_1) + (\BRACEVSPACE, - \BRACEHSPACE) $) node[xshift = \BRACEXSIFT, yshift = 0mm, pos = 0.5] {\INDEX_SIZE$k_{n} - 1$};
\draw[decorate, decoration={brace, mirror, amplitude = \BRACEAMPLITUDE}]
($(E4_0_0) + (- \BRACEVSPACE, - \BRACEHSPACE)$) -- ($(E5_0_0) + ( \BRACEVSPACE,  -\BRACEHSPACE) $) node[xshift = 5mm, yshift = -\BRACEXSIFT*0.6, pos = 0.5] {\INDEX_SIZE$a_{k_{n+1}+1}$};
\tdot (d1) at ($(E2_0_1)!.5!(E2_0_2) + (\DOTSBRANK, \DOTSPOSITION)$) {} ;
\tdot (d3) at ($(E3_0_1)!.5!(E3_0_2) + (- \DOTSBRANK, \DOTSPOSITION)$){};
\tdot (d2) at ($(d1)!.5!(d3)$){};
}\end{tikzpicture}
\right).
\end{align*}
The second equality is obtained by ordering $m$ black vertices and $1+k_{r+1}$ white vertices. 
Here, by Lemma \ref{le:Lidi}, the special values of the Miyagawa-type MZF is written as 
\begin{align*}
&\zeta_{MT}(k_{1}, \dots, k_{r}; \mathbf{k}_{r+1}) \\
&=\int_{0<t_{1}<\dots<t_{k_{r+1,1} +\dots+k_{r+1,n_{r+1}}}<1} \left(\prod_{i=2}^{k_{r+1},n_{r+1}}\frac{1}{t_{k_{r+1,1} +\dots+k_{r+1,n_{r+1}-1} +i}}\right) \\
&\quad\times\frac{1}{1-t_{k_{r+1,1} +\dots+k_{r+1,n_{r+1}-1} +1}} \left(\prod_{i=2}^{k_{r+1,n_{r+1}-1}}\frac{1}{t_{k_{r+1,1} +\dots+k_{r+1,n_{r+1}-2} +i}}\right)\cdots\\
&\quad\dots \times \frac{1}{1-t_{k_{r+1,1}+k_{r+1,2}+1}} \left(\prod_{i=2}^{k_{r+1,2}}\frac{1}{t_{k_{r+1,1}+i}}\right)\\
&\quad\times\frac{1}{1-t_{k_{r+1,1}+1}} \left(\prod_{i=1}^{k_{r+1,1}}\frac{1}{t_{i}}\right) \prod_{i=1}^{r}{\rm Li}_{k_i}(t_{1})\,dt_{1}\cdots dt_{k_{r+1,1}+\dots+k_{r+1,n_{r+1}}}\\
&= I\left(
\begin{tikzpicture}[thick, baseline=-40mm]
{
\Whitecircle (E0_0_0) at (0,0) {};
\Whitecircle (E0_0_1) at ($(E0_0_0) + (0, -\DOT_DISTANCE)$) {};
\drawvdots{E0_0_0}{E0_0_1};
\Blackcircle (E0_0_2) at ($(E0_0_1) + (0, -\LINE_DISTANCE)$) {};
\draw [line width = \LINEWIDTH] (E0_0_1) -- (E0_0_2);
\Blackcircle (E0_0_3) at ($(E0_0_2) + (0, -\DOT_DISTANCE)$) {};
\drawvdots{E0_0_2}{E0_0_3};
\Whitecircle (E0_0_4) at ($(E0_0_3) + (0, -\LINE_DISTANCE)$) {};
\draw [line width = \LINEWIDTH] (E0_0_3) -- (E0_0_4);
\Whitecircle (E0_0_5) at ($(E0_0_4) + (0, -\DOT_DISTANCE)$) {};
\drawvdots{E0_0_4}{E0_0_5};
\Blackcircle (E0_0_6) at ($(E0_0_5) + (0, -\LINE_DISTANCE)$) {};
\draw [line width = \LINEWIDTH](E0_0_5)  -- (E0_0_6);
\Whitecircle (E0_0_7) at ($(E0_0_6) + (0, -\LINE_DISTANCE)$) {};
\draw [line width = \LINEWIDTH] (E0_0_6) -- (E0_0_7);
\Whitecircle (E0_0_8) at ($(E0_0_7) + (0, -\DOT_DISTANCE)$) {};
\drawvdots{E0_0_7}{E0_0_8};
\def\branch{E0_0_8};
\Whitecircle (E1_0_0) at ($(\branch) + ( -1 *  \BRANCH_WIDTH, -\LINE_DISTANCE)$) {};
\draw  [line width = \LINEWIDTH](\branch) -- (E1_0_0);
\Whitecircle (E1_0_1) at ($(E1_0_0) + (0, - \DOT_DISTANCE)$) {};
\drawvdots{E1_0_0}{E1_0_1}
\Blackcircle (E1_0_2) at ($(E1_0_1) + (0, - \LINE_DISTANCE)$) {};
\draw [line width = \LINEWIDTH] (E1_0_1) -- (E1_0_2);
\Whitecircle (E3_0_0) at ($(\branch) + ( \BRANCH_WIDTH, -\LINE_DISTANCE)$) {};
\draw [line width = \LINEWIDTH] (\branch) -- (E3_0_0);
\Whitecircle (E3_0_1) at ($(E3_0_0) + (0, - \DOT_DISTANCE)$) {};
\drawvdots{E3_0_0}{E3_0_1}
\Blackcircle (E3_0_2) at ($(E3_0_1) + (0, - \LINE_DISTANCE)$) {};
\draw [line width = \LINEWIDTH] (E3_0_1) -- (E3_0_2);
\Whitecircle (E2_0_0) at ($(E1_0_0)!\SECOND_EDGE_PROPOTION!(E3_0_0)$) {};
\draw  [line width = \LINEWIDTH] (\branch) -- (E2_0_0);
\Whitecircle (E2_0_1) at ($(E2_0_0) + (0, - \DOT_DISTANCE)$) {};
\drawvdots{E2_0_0}{E2_0_1}
\Blackcircle (E2_0_2) at ($(E2_0_1) + (0, - \LINE_DISTANCE)$) {};
\draw [line width = \LINEWIDTH] (E2_0_1) -- (E2_0_2);
\draw[decorate, decoration={brace, mirror, amplitude = \BRACEAMPLITUDE}]
($(E0_0_0) + (- \BRACEVSPACE, \BRACEHSPACE)$) -- ($(E0_0_1) + (- \BRACEVSPACE, - \BRACEHSPACE) $) node[xshift = -\BRACEXSIFT*1.7, yshift = 0mm, pos = 0.5] {\INDEX_SIZE$k_{r+1,n_{r+1}}-1$};
\draw[decorate, decoration={brace, mirror, amplitude = \BRACEAMPLITUDE}]
($(E0_0_4) + (- \BRACEVSPACE, \BRACEHSPACE)$) -- ($(E0_0_5) + (- \BRACEVSPACE, - \BRACEHSPACE) $) node[xshift = -\BRACEXSIFT*1.3, yshift = 0mm, pos = 0.5] {\INDEX_SIZE$k_{r+1,2}-1$};
\draw[decorate, decoration={brace, mirror, amplitude = \BRACEAMPLITUDE}]
($(E0_0_7) + (- \BRACEVSPACE, \BRACEHSPACE)$) -- ($(E0_0_8) + (- \BRACEVSPACE, - \BRACEHSPACE) $) node[xshift = -\BRACEXSIFT*1.3, yshift = 0mm, pos = 0.5] {\INDEX_SIZE$k_{r+1,1}$};
\draw[decorate, decoration={brace, mirror, amplitude = \BRACEAMPLITUDE}]
($(E1_0_0) + ( - \BRACEVSPACE, \BRACEHSPACE)$) -- ($(E1_0_1) + ( - \BRACEVSPACE, - \BRACEHSPACE) $) node[xshift = - \BRACEXSIFT, yshift = 0mm, pos = 0.5] {\INDEX_SIZE$k_{1} - 1$};
\draw[decorate, decoration={brace, amplitude = \BRACEAMPLITUDE}]
($(E2_0_0) + (\BRACEVSPACE, \BRACEHSPACE)$) -- ($(E2_0_1) + (\BRACEVSPACE, - \BRACEHSPACE) $) node[xshift = \BRACEXSIFT, yshift = 0mm, pos = 0.5] {\INDEX_SIZE$k_{2} - 1$};
\draw[decorate, decoration={brace, amplitude = \BRACEAMPLITUDE}]
($(E3_0_0) + (\BRACEVSPACE, \BRACEHSPACE)$) -- ($(E3_0_1) + (\BRACEVSPACE, - \BRACEHSPACE) $) node[xshift = \BRACEXSIFT, yshift = 0mm, pos = 0.5] {\INDEX_SIZE$k_{r} - 1$};
\tdot (d1) at ($(E2_0_1)!.5!(E2_0_2) + (\DOTSBRANK, \DOTSPOSITION)$) {} ;
\tdot (d3) at ($(E3_0_1)!.5!(E3_0_2) + (- \DOTSBRANK, \DOTSPOSITION)$){};
\tdot (d2) at ($(d1)!.5!(d3)$){};
}\end{tikzpicture}
\right).
\end{align*}
Therefore, we obtain
\begin{align*}
&\zeta_{MT}(\{1\}^{a_{k_{n+1}+1}}, k_{1}, \dots, k_{n}; {}_{-}((a_{1}+1, \dots, a_{k_{n+1}}+1, 2)^{*})) \\
&= I\left(
\begin{tikzpicture}[thick, baseline=-35.5mm]
{
\Whitecircle (E0_0_0) at (0,0) {};
\Blackcircle (E0_0_1) at ($(E0_0_0) + (0, -\LINE_DISTANCE)$) {};
\draw  [line width = \LINEWIDTH](E0_0_0) -- (E0_0_1);
\Blackcircle (E0_0_2) at ($(E0_0_1) + (0, -\DOT_DISTANCE)$) {};
\drawvdots{E0_0_1}{E0_0_2};
\Whitecircle (E0_0_3) at ($(E0_0_2) + (0, -\LINE_DISTANCE)$) {};
\draw  [line width = \LINEWIDTH](E0_0_2) -- (E0_0_3);
\Whitecircle (E0_0_4) at ($(E0_0_3) + (0, -\DOT_DISTANCE)$) {};
\drawvdots{E0_0_3}{E0_0_4};
\Blackcircle (E0_0_5) at ($(E0_0_4) + (0, -\LINE_DISTANCE)$) {};
\draw  [line width = \LINEWIDTH](E0_0_4) -- (E0_0_5);
\Blackcircle (E0_0_6) at ($(E0_0_5) + (0, -\DOT_DISTANCE)$) {};
\drawvdots{E0_0_5}{E0_0_6};
\Whitecircle (E0_0_7) at ($(E0_0_6) + (0, -\LINE_DISTANCE)$) {};
\draw  [line width = \LINEWIDTH](E0_0_6) -- (E0_0_7);
\def\branch{E0_0_7};
\Whitecircle (E1_0_0) at ($(\branch) + ( -1.5 *  \BRANCH_WIDTH, -\LINE_DISTANCE)$) {};
\draw  [line width = \LINEWIDTH](\branch) -- (E1_0_0);
\Whitecircle (E1_0_1) at ($(E1_0_0) + (0, - \DOT_DISTANCE)$) {};
\drawvdots{E1_0_0}{E1_0_1}
\Blackcircle (E1_0_2) at ($(E1_0_1) + (0, - \LINE_DISTANCE)$) {};
\draw [line width = \LINEWIDTH] (E1_0_1) -- (E1_0_2);
\Whitecircle (E3_0_0) at ($(\branch) + (0.5 * \BRANCH_WIDTH, -\LINE_DISTANCE)$) {};
\draw [line width = \LINEWIDTH] (\branch) -- (E3_0_0);
\Whitecircle (E3_0_1) at ($(E3_0_0) + (0, - \DOT_DISTANCE)$) {};
\drawvdots{E3_0_0}{E3_0_1}
\Blackcircle (E3_0_2) at ($(E3_0_1) + (0, - \LINE_DISTANCE)$) {};
\draw [line width = \LINEWIDTH] (E3_0_1) -- (E3_0_2);
\Whitecircle (E2_0_0) at ($(E1_0_0)!\SECOND_EDGE_PROPOTION!(E3_0_0)$) {};
\draw  [line width = \LINEWIDTH] (\branch) -- (E2_0_0);
\Whitecircle (E2_0_1) at ($(E2_0_0) + (0, - \DOT_DISTANCE)$) {};
\drawvdots{E2_0_0}{E2_0_1}
\Blackcircle (E2_0_2) at ($(E2_0_1) + (0, - \LINE_DISTANCE)$) {};
\draw [line width = \LINEWIDTH] (E2_0_1) -- (E2_0_2);
\Blackcircle (E4_0_0) at ($(E3_0_0) + (\BRANCH_WIDTH, 0)$) {};
\draw  [line width = \LINEWIDTH] (\branch) -- (E4_0_0);
\Blackcircle (E5_0_0) at ($(E4_0_0) + (\BRANCH_WIDTH, 0)$) {};
\draw  [line width = \LINEWIDTH] (\branch) -- (E5_0_0);
\tdot (d0_1) at ($(E4_0_0) + (\DOTSBRANK*0.5, \DOTSPOSITION-0.05 )$) {} ;
\tdot (d0_3) at ($(E5_0_0) + (- \DOTSBRANK*0.5, \DOTSPOSITION-0.05)$){};
\tdot (d0_2) at ($(d0_1)!.5!(d0_3)$){};
\draw[decorate, decoration={brace, mirror, amplitude = \BRACEAMPLITUDE}]
($(E0_0_1) + (- \BRACEVSPACE, \BRACEHSPACE)$) -- ($(E0_0_2) + (- \BRACEVSPACE, - \BRACEHSPACE) $) node[xshift = -\BRACEXSIFT, yshift = 0mm, pos = 0.5] {\INDEX_SIZE$a_{1}$};
\draw[decorate, decoration={brace, mirror, amplitude = \BRACEAMPLITUDE}]
($(E0_0_5) + (- \BRACEVSPACE, \BRACEHSPACE)$) -- ($(E0_0_6) + (- \BRACEVSPACE, - \BRACEHSPACE) $) node[xshift = -\BRACEXSIFT, yshift = 0mm, pos = 0.5] {\INDEX_SIZE$a_{k_{n+1}}$};
\draw[decorate, decoration={brace, mirror, amplitude = \BRACEAMPLITUDE}]
($(E1_0_0) + ( - \BRACEVSPACE, \BRACEHSPACE)$) -- ($(E1_0_1) + ( - \BRACEVSPACE, - \BRACEHSPACE) $) node[xshift = - \BRACEXSIFT, yshift = 0mm, pos = 0.5] {\INDEX_SIZE$k_{1} - 1$};
\draw[decorate, decoration={brace, amplitude = \BRACEAMPLITUDE}]
($(E2_0_0) + (\BRACEVSPACE, \BRACEHSPACE)$) -- ($(E2_0_1) + (\BRACEVSPACE, - \BRACEHSPACE) $) node[xshift = \BRACEXSIFT, yshift = 0mm, pos = 0.5] {\INDEX_SIZE$k_{2} - 1$};
\draw[decorate, decoration={brace, amplitude = \BRACEAMPLITUDE}]
($(E3_0_0) + (\BRACEVSPACE, \BRACEHSPACE)$) -- ($(E3_0_1) + (\BRACEVSPACE, - \BRACEHSPACE) $) node[xshift = \BRACEXSIFT, yshift = 0mm, pos = 0.5] {\INDEX_SIZE$k_{n} - 1$};
\draw[decorate, decoration={brace, mirror, amplitude = \BRACEAMPLITUDE}]
($(E4_0_0) + (- \BRACEVSPACE, - \BRACEHSPACE)$) -- ($(E5_0_0) + ( \BRACEVSPACE,  -\BRACEHSPACE) $) node[xshift = 5mm, yshift = -\BRACEXSIFT*0.6, pos = 0.5] {\INDEX_SIZE$a_{k_{n+1}+1}$};
\tdot (d1) at ($(E2_0_1)!.5!(E2_0_2) + (\DOTSBRANK, \DOTSPOSITION)$) {} ;
\tdot (d3) at ($(E3_0_1)!.5!(E3_0_2) + (- \DOTSBRANK, \DOTSPOSITION)$){};
\tdot (d2) at ($(d1)!.5!(d3)$){};
}\end{tikzpicture}
\right).
\end{align*}
Therefore, we obtain Theorem \ref{th:AKZofMspv}.
\end{remark}
\subsection{Functional relations}
Functions (\ref{eq:AKZofM}) has the relationship with Miyagawa-type MZFs as follows.
\begin{theorem}\label{th:AKZofMfre}
For $l,\, k \in \mathbb{N},\ s \in \mathbb{C}$, we have
\begin{align*}
&\zeta(2)^{l} \zeta(\{1\}^{k},s) +\sum_{j=1}^{l} \binom{l}{j} \zeta(2)^{l-j} (-1)^{j} \biggl(\sum_{i=1}^{k}(-1)^{i-1}\zeta_{MT}(\{2\}^{j};i)\zeta(\{1\}^{k-i},s) \\
&\quad+(-1)^{k} \xi_{MT}((\{2\}^{j};k);s) \biggr) \\
&=\hspace{-1.3mm}\sum_{a+b_{1}+\dots+b_{k+1}=l}\frac{l!}{a!} \binom{s+b_{k+1}-1}{b_{k+1}} \zeta_{MT}(0,\!\{2\}^{a},\!\{1\}^{l-a};b_{1}+1,\dots,b_{k}+1,b_{k+1}+s),
\end{align*}
where the sum on the right hand side is over all $a \in \mathbb{Z}_{\ge 0}$ and $b_{i} \in \mathbb{Z}_{\ge 0}$ satisfying $a+b_{1}+\dots+b_{k+1}=l$.
\end{theorem}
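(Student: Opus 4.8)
The plan is to reduce both sides of the asserted identity to one and the same multiple integral. I work throughout in a half--plane $\Re(s)\gg 0$ where every series and integral converges absolutely; the identity then propagates to all $s$ because both sides are meromorphic in $s$ (the Miyagawa-type MZFs by the continuation recorded after (\ref{de:MiyagawaMZF})). Writing $T[f](t)=\int_{t}^{\infty}f(v)(e^{v}-1)^{-1}\,dv$, Corollary \ref{co:liint} (after the change of variable $w=t+u$) yields the recursion
\[
{\rm Li}_{\{2\}^{j};i}(1-e^{-t})=\zeta_{MT}(\{2\}^{j};i)-T\bigl[{\rm Li}_{\{2\}^{j};i-1}(1-e^{-\cdot})\bigr](t)\qquad(i\ge 2),
\]
with the terminal step $i=1$ replacing ${\rm Li}_{\{2\}^{j};0}$ by ${\rm Li}_{2}(1-e^{-\cdot})^{j}$ via the second formula of Corollary \ref{co:liint}. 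Iterating from $i=k$ down to $i=1$ peels off exactly the boundary values $\zeta_{MT}(\{2\}^{j};i)$ with alternating signs, and a short gap--variable computation gives $\int_{0}^{\infty}t^{s-1}(e^{t}-1)^{-1}T^{m}[1](t)\,dt=\Gamma(s)\zeta(\{1\}^{m},s)$. A comparison of signs then shows that $\Gamma(s)$ times the $j$-th bracket on the left equals $\int_{0}^{\infty}t^{s-1}(e^{t}-1)^{-1}T^{k}[{\rm Li}_{2}(1-e^{-\cdot})^{j}](t)\,dt$, uniformly for $j\ge 0$ (the $j=0$ term being $\zeta(2)^{l}\zeta(\{1\}^{k},s)$). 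Summing against the weights $\binom{l}{j}\zeta(2)^{l-j}(-1)^{j}$ and using the linearity of $T^{k}$ together with the binomial theorem, the entire left-hand side collapses to
\[
J(s):=\frac{1}{\Gamma(s)}\int_{0}^{\infty}\frac{t^{s-1}}{e^{t}-1}\,T^{k}\bigl[(\zeta(2)-{\rm Li}_{2}(1-e^{-\cdot}))^{l}\bigr](t)\,dt .
\]

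Next I unfold $J(s)$ into a simplex integral. By Corollary \ref{co:liint} with $\mathbf{k}=(1)$ and ${\rm Li}_{1}(1-e^{-w})=w$ one has $\zeta(2)-{\rm Li}_{2}(1-e^{-v})=\int_{v}^{\infty}w(e^{w}-1)^{-1}\,dw$, so the $l$-th power becomes an ordered $l$-fold integral and $J(s)$ equals
\[
\frac{l!}{\Gamma(s)}\int_{0<t<v_{1}<\cdots<v_{k}<w_{1}<\cdots<w_{l}}\frac{t^{s-1}}{e^{t}-1}\prod_{q=1}^{k}\frac{1}{e^{v_{q}}-1}\prod_{p=1}^{l}\frac{w_{p}}{e^{w_{p}}-1}\,dt\,dv_{1}\cdots dv_{k}\,dw_{1}\cdots dw_{l}.
\]

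The final step evaluates this integral as the Miyagawa-type sum. Since $\prod_{p}w_{p}(e^{w_{p}}-1)^{-1}$ is symmetric, I un-order the $w_{p}$ (absorbing the factor $l!$), so that each $w_{p}$ ranges independently over $(v_{k},\infty)$. I then write $w_{p}=(w_{p}-v_{k})+v_{k}$ and expand the product: choosing $(w_{p}-v_{k})$ in $a$ of the factors keeps those $a$ variables as first-group entries of index $2$ with offset $v_{k}$, the remaining $l-a$ becoming entries of index $1$. Expanding the leftover $v_{k}^{\,l-a}=(\sigma_{1}+\cdots+\sigma_{k+1})^{l-a}$ in the chain gaps $\sigma_{1}=t,\ \sigma_{q+1}=v_{q}-v_{q-1}$ (with $v_{0}:=t$) multinomially distributes these $l-a$ powers among the $k+1$ positions of the chain $t<v_{1}<\cdots<v_{k}$, the share landing on $t$ turning $t^{s-1}$ into $t^{s-1+b_{k+1}}$. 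By Lemma \ref{le:Aintrep} and the representation (\ref{eq:Mintrep}), each resulting monomial is exactly the integral representation of $\zeta_{MT}(0,\{2\}^{a},\{1\}^{l-a};b_{1}+1,\dots,b_{k}+1,b_{k+1}+s)$; the two expansions contribute the multinomial coefficient $\tfrac{l!}{a!\,b_{1}!\cdots b_{k+1}!}$, and after canceling the $\Gamma(b_{u}+1)=b_{u}!$ factors from the prefactor of (\ref{eq:Mintrep}) the surviving $\Gamma(b_{k+1}+s)/(\Gamma(s)\,b_{k+1}!)=\binom{s+b_{k+1}-1}{b_{k+1}}$ is precisely the stated weight. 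Collecting over all compositions $a+b_{1}+\cdots+b_{k+1}=l$ gives the right-hand side.

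The crux is this last step: one must organize the two nested expansions (the binomial split $w_{p}=(w_{p}-v_{k})+v_{k}$ and the multinomial expansion of $v_{k}^{\,l-a}$ over the chain gaps) so that each monomial corresponds to a single Miyagawa integrand, and then verify that the multiplicities and the $s$-shift reproduce the coefficient $\tfrac{l!}{a!}\binom{s+b_{k+1}-1}{b_{k+1}}$ \emph{exactly}; the matching of the nested denominators $e^{t_{r+1,u}+\cdots}-1$ in (\ref{eq:Mintrep}) with the chain ordering is what forces the index pattern $b_{1}+1,\dots,b_{k}+1,b_{k+1}+s$. The remaining points are routine: justifying Fubini and term-by-term integration for $\Re(s)$ large, and invoking meromorphic continuation for general $s$. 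The whole argument is the Miyagawa-type analogue of Ito's proof of Theorem \ref{th:IZfrel}, which is recovered in the single-slot case $n_{r+1}=1$.
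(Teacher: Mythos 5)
Your proposal is correct and is essentially the paper's own proof: the paper introduces the same integral $J$ (written in gap variables $u_i, t_1,\dots,t_{k+1}$ rather than your absolute positions $w_p, v_q$) and evaluates it in two ways — once by integrating out the $u_i$ via Corollary \ref{co:liint} and then integrating by parts along the chain to peel off the $\zeta_{MT}(\{2\}^{j};i)$ boundary terms (your $T$-operator telescoping read in reverse), and once by a single multinomial expansion of $\prod_{i}(u_i+t_1+\dots+t_{k+1})$, which yields the same coefficients $\frac{l!}{a!\,b_1!\cdots b_{k+1}!}$ as your two-stage binomial-then-multinomial expansion, followed by the representation (\ref{eq:Mintrep}). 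The only cosmetic differences are the change of variables and the order in which the two evaluations are presented, so no further comment is needed.
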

\begin{remark}
If we understand the sum in $i$ as in equal to $0$ when $k=0$, then Theorem \ref{th:AKZofMfre} holds also when $k=0$ and coincides with Theorem \ref{th:IZfrel}.
\end{remark}
\begin{remark}
By putting $s=m+1$ in Theorem \ref{th:AKZofMfre} and using Theorem \ref{th:AKZofMspv} for $\xi_{MT}((\{2\}^{j};k);s)$, we can obtain relations among Miyagawa-type MZVs.
\end{remark}
\begin{proof}[Proof of Theorem \ref{th:AKZofMfre}]
For $s \in \mathbb{C}$ with $\Re(s) > 1$, let 
\begin{align*}
J&=\int_{0}^{\infty} \cdots \int_{0}^{\infty} t_{k+1}^{s-1} \left(\prod_{i=1}^{l}\frac{u_{i}+t_{1}+\dots+t_{k+1}}{e^{u_{i}+t_{1}+\dots+t_{k+1}}-1}\right) \\
&\quad\times \frac{1}{e^{t_{1}+\dots+t_{k+1}}-1} \frac{1}{e^{t_{2}+\dots+t_{k+1}}-1} \cdots \frac{1}{e^{t_{k+1}}-1} \,du_{1}\cdots du_{l}dt_{1}\cdots dt_{k+1}.
\end{align*}
We calculate $J$ in two different ways.

The first calculation is to integrate directly by using Corollary \ref{co:liint}. By integrating with respect to $u_{1},\dots,u_{m}$, we have
\begin{align*}
J&=\int_{0}^{\infty} \cdots \int_{0}^{\infty} t_{k+1}^{s-1} \Bigl(\zeta(2) - {\rm Li}_{2}(1-e^{-(t_{1}+\dots+t_{k+1})})\Bigr)^{l} \\
&\quad\times \frac{1}{e^{t_{1}+\dots+t_{k+1}}-1} \frac{1}{e^{t_{2}+\dots+t_{k+1}}-1} \cdots \frac{1}{e^{t_{k+1}}-1} \,dt_{1}\cdots dt_{k+1} \\
&=\Gamma(s)\zeta(2)^{l} \zeta(\{1\}^{k},s) \\
&\quad+\sum_{j=1}^{l} \binom{l}{j} \zeta(2)^{l-j} (-1)^{j}\int_{0}^{\infty} \cdots \int_{0}^{\infty} t_{k+1}^{s-1} \Bigl( {\rm Li}_{2}(1-e^{-(t_{1}+\dots+t_{k+1})}) \Bigr)^{j} \\
&\quad\times \frac{1}{e^{t_{1}+\dots+t_{k+1}}-1} \frac{1}{e^{t_{2}+\dots+t_{k+1}}-1} \cdots \frac{1}{e^{t_{k+1}}-1} \,dt_{1}\cdots dt_{k+1}.
\end{align*}
We integrate the above integral by parts in order of $t_{1},\dots,t_{k}$ to obtain
\begin{align*}
&\int_{0}^{\infty}  \cdots  \int_{0}^{\infty} t_{k+1}^{s-1} \Bigl( {\rm Li}_{2}(1-e^{-(t_{1}+\dots+t_{k+1})}) \Bigr)^{j} \\
&\quad\times \frac{1}{e^{t_{1}+\dots+t_{k+1}}-1}\frac{1}{e^{t_{2}+\dots+t_{k+1}}-1}  \cdots   \frac{1}{e^{t_{k+1}}-1} \,dt_{1}\cdots dt_{k+1} \\
&=\int_{0}^{\infty} \cdots \int_{0}^{\infty} t_{k+1}^{s-1}  \Bigl( \zeta_{MT}(\{2\}^{j};1) - {\rm Li}_{\{2\}^{j};1} (1-e^{-(t_{2}+\dots+t_{k+1})}) \Bigr) \\
&\quad\times \frac{1}{e^{t_{2}+\dots+t_{k+1}}-1}\frac{1}{e^{t_{3}+\dots+t_{k+1}}-1} \cdots \frac{1}{e^{t_{k+1}}-1} \,dt_{2}\cdots dt_{k+1} \\
&=\Gamma(s)\zeta_{MT}(\{2\}^{j};1)\zeta(\{1\}^{k-1},s) \\
&\quad-\int_{0}^{\infty}  \cdots  \int_{0}^{\infty} t_{k+1}^{s-1}   {\rm Li}_{\{2\}^{j};1} (1-e^{-(t_{2}+\dots+t_{k+1})}) \\
&\quad\times \frac{1}{e^{t_{2}+\dots+t_{k+1}}-1} \frac{1}{e^{t_{3}+\dots+t_{k+1}}-1} \cdots \frac{1}{e^{t_{k+1}}-1} \,dt_{2}\cdots dt_{k+1} \\
&=\Gamma(s)\zeta_{MT}(\{2\}^{j};1)\zeta(\{1\}^{k-1},s)-\Gamma(s)\zeta_{MT}(\{2\}^{j};2)\zeta(\{1\}^{k-2},s)\\
&\quad+\int_{0}^{\infty} \cdots \int_{0}^{\infty} t_{k+1}^{s-1}   {\rm Li}_{\{2\}^{j};2} (1-e^{-(t_{3}+\dots+t_{k+1})}) \\
&\quad\times \frac{1}{e^{t_{3}+\dots+t_{k+1}}-1} \frac{1}{e^{t_{4}+\dots+t_{k+1}}-1} \cdots \frac{1}{e^{t_{k+1}}-1} \,dt_{3}\cdots dt_{k+1} \\
&=\cdots \\
&=\Gamma(s)\sum_{i=1}^{k}(-1)^{i-1}\zeta_{MT}(\{2\}^{j};i)\zeta(\{1\}^{k-i},s) \\
&\quad+(-1)^{k}\int_{0}^{\infty} t_{k+1}^{s-1} {\rm Li}_{\{2\}^{j};k}(1-e^{-t_{k+1}})\frac{1}{e^{t_{k+1}}-1}\,dt_{k+1}.
\end{align*} 
Therefore, we have
\begin{align*}
J&=\Gamma(s)\zeta(2)^{l} \zeta(\{1\}^{k},s)+\Gamma(s)\sum_{j=1}^{l} \binom{l}{j} \zeta(2)^{l-j} (-1)^{j} \\
&\quad\times \Biggl( \sum_{i=1}^{k}(-1)^{i-1}\zeta_{MT}(\{2\}^{j};i)\zeta(\{1\}^{k-i},s)+(-1)^{k} \xi_{MT}((\{2\}^{j};k);s) \Biggr) \, .
\end{align*}

The second calculation is to use the polynomial expansion and the equation (\ref{eq:Mintrep}). By symmetry of $u_{1},\dots,u_{l}$, we have
\begin{align*}
J&=\sum_{a+b_{1}+\dots+b_{k+1}=l}\frac{l!}{a!b_{1}!\cdots b_{k+1}!}\int_{0}^{\infty} \cdots \int_{0}^{\infty} t_{k+1}^{s-1} u_{1} \cdots u_{a} t_{1}^{b_{1}}\cdots t_{k+1}^{b_{k+1}} \\
&\quad\times \left(\prod_{i=1}^{l}\frac{1}{e^{u_{i}+t_{1}+\dots+t_{k+1}}-1} \right)\left(\prod_{i=1}^{k+1}\frac{1}{e^{t_{i}+\dots+t_{k+1}}-1}\right) \,du_{1}\cdots du_{l}\,dt_{1}\cdots dt_{k+1} \\
&=\sum_{a+b_{1}+\dots+b_{k+1}=l}\frac{l!\Gamma(b_{k+1}+s)}{a!b_{k+1}!}\zeta(0,\{2\}^{a},\{1\}^{l-a};1+b_{1},\dots,1+b_{k},b_{k+1}+s).
\end{align*}
By the first and second calculations, we obtain the desired identity for $\Re(s) > 1$.
By the analytic continuation of the EZ-type MZF, the generalized Ito zeta function (\cite[Theorem 14]{Ito}) and the Miyagawa-type MZF, we obtain the stated theorem.
\end{proof}
Note that $J$ was calculated by Arakawa and Kaneko when $l=1$, and Ito when $k=0$ (see the proof of \cite[Theorem 6.(ii)]{ArakawaKaneko} and \cite[Theorem 8]{Ito}). Therefore, we may regard this proof as a fusion of the method of Arakawa and Kaneko, and the method of Ito.

\section{Generalized Mordell-Tornheim multiple zeta function}\label{se:Uzeta}
We will generalize Theorem \ref{th:IZfrel} and Theorem \ref{th:AKZofMfre} in the next section. For this purpose, we need to introduce a new class of the multiple zeta function (\ref{eq:defU}). In this section, we show several propositions on the function (\ref{eq:defU}).
\begin{remark}\label{re:Uzeta}
The function (\ref{eq:defU}) contains the EZ-type MZF, the MT-type MZF and the Miyagawa-type MZF as special cases. For example,
\[\zeta_{MT}((s_{1}, \dots, s_{j}); (s_{j+1}, \dots, s_{n})) = \zeta(s_{1}, \dots, s_{j-1}, s_{j}+s_{j+1}, s_{j+2}, \dots, s_{n}),\]
\[\zeta_{MT}((s_{1}), \dots, (s_{r}); (s_{r+1})) = \zeta_{MT}(s_{1}, \dots, s_{r}; s_{r+1}),\]
and
\[\zeta_{MT}((s_{1}), \dots, (s_{j}); (s_{j+1}, \dots, s_{r+1})) = \hat{\zeta}_{MT,j,r} (s_{1},\dots,s_{j};s_{j+1},\dots,s_{r+1}).\]
Therefore, for the case $n_{i} = 1\ (1 \le i \le r)$ and $\mathbf{s}_{r+1}=(s_{r+1,1},\dots,s_{r+1,n_{r+1}})$, we may omit the parentheses.
For example, we write $\zeta_{MT}((s_{1,1}), (s_{2,1}, s_{2,2});(s_{3,1}, s_{3,2}))$ as $\zeta_{MT}(s_{1,1}, (s_{2,1}, s_{2,2}); s_{3,1}, s_{3,2})$.
Under this notation, the Miyagawa-type MZF is written as
\[\hat{\zeta}_{MT,r,r+n_{r+1}-1} (s_{1},\dots,s_{r};s_{r+1,1},\dots,s_{r+1,n_{r+1}})=\zeta_{MT} (s_{1},\dots,s_{r};\mathbf{s}_{r+1}).\]
\end{remark}

\begin{proposition}\label{pr:Uconv}
If one of the following conditions is satisfied, then the function $(\ref{eq:defU})$  conveges absolutely.
\begin{enumerate}[{\rm (i)}]
\item\begin{align*}
\Re(s_{i,j}) &\ge 1 &&(1 \le i \le r,1 \le j \le n_{r}), \\
\sum_{i=0}^{k} \Re(s_{r+1,n_{r+1} -i}) &> k + 1 &&(0 \le k \le n_{r+1} - 2),\\
\sum_{i=0}^{n_{r+1} -1} \Re(s_{r+1,n_{r+1} -i})  &> n_{r+1} -1.
\end{align*}
\item\begin{align*}
\mathbf{s_{1}} &= (0),\\
\Re(s_{i,j}) &\ge 1 &&(2 \le i \le r,1 \le j \le n_{r}),\\
\sum_{i=0}^{k} \Re(s_{r+1,n_{r+1} -i}) &> k + 1 &&(0 \le k \le n_{r+1} - 1).
\end{align*}
\end{enumerate}
\end{proposition}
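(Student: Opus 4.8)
The plan is to reduce the absolute convergence of (\ref{eq:defU}) to that of an ordinary Mordell--Tornheim series, for which we may invoke the Okamoto--Onozuka criterion quoted in the introduction. Throughout I would replace every $s_{i,j}$ by its real part, so that it suffices to prove convergence of the resulting series of positive terms. The key structural observation is that the summand is a product over the chain variables $0<m_{i,1}<\cdots<m_{i,n_i}$ times the final block in $m_{r+1,1},\dots,m_{r+1,n_{r+1}-1}$, and the only coupling between the two is through the common offset $N:=\sum_{v=1}^{r}m_{v,n_v}$ appearing in every factor $(N+\sum_{w=1}^{u-1}m_{r+1,w})^{s_{r+1,u}}$.

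First I would dispose of the final block. For $N$ fixed, the $n_{r+1}-1$ sums over $m_{r+1,1},\dots,m_{r+1,n_{r+1}-1}$ form an Euler--Zagier-type simplex sum shifted by $N$, and I would collapse it exactly as in the proof of the preceding proposition: summing from the innermost variable outward and repeatedly using $\sum_{n\ge1}(M+n)^{-\sigma}\le(\sigma-1)^{-1}M^{-(\sigma-1)}$, each step being licensed by a hypothesis $\sum_{i=0}^{k}\Re(s_{r+1,n_{r+1}-i})>k+1$. After the $n_{r+1}-1$ summations a single factor survives and the block is bounded by $N^{-\tau}$ with
\[\tau=\sum_{i=0}^{n_{r+1}-1}\Re(s_{r+1,n_{r+1}-i})-(n_{r+1}-1).\]
Next, for each $i$ the lower chain variables $m_{i,1}<\cdots<m_{i,n_i-1}$ do not occur in $N$, so with $M_i:=m_{i,n_i}$ fixed their sum factors off; since every exponent is $\ge1$ it is $\ll(\log M_i)^{n_i-1}\ll_{\varepsilon}M_i^{\varepsilon}$ for any $\varepsilon>0$. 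What remains is
\[\sum_{M_1,\dots,M_r\ge1}\frac{\prod_{i=1}^{r}M_i^{\varepsilon}}{\prod_{i=1}^{r}M_i^{\Re(s_{i,n_i})}\,(M_1+\cdots+M_r)^{\tau}},\]
an MT-type series with exponents $\Re(s_{i,n_i})-\varepsilon$ and central exponent $\tau$.

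It then remains to verify the Okamoto--Onozuka conditions, namely $\sum_{l=1}^{j}(\Re(s_{k_l,n_{k_l}})-\varepsilon)+\tau>j$ for every $1\le k_1<\cdots<k_j\le r$. In case (i) each $\Re(s_{i,n_i})\ge1$ forces $\sum_l\Re(s_{k_l,n_{k_l}})\ge j$, while the third hypothesis is exactly $\tau>0$; hence the inequalities hold once $\varepsilon$ is small, and convergence follows. In case (ii) the exponent $\Re(s_{1,1})=0$ creates a deficit of $1$ whenever $1\in\{k_1,\dots,k_j\}$, but the extra hypothesis (the case $k=n_{r+1}-1$) gives $\sum_{i=0}^{n_{r+1}-1}\Re(s_{r+1,n_{r+1}-i})>n_{r+1}$, upgrading the block bound to $\tau>1$, which absorbs precisely that deficit; subsets not containing $1$ are handled as in case (i).

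The step I expect to be the crux is the block collapse together with the bookkeeping of the surviving exponent $\tau$: one must check that each intermediate partial-tail exponent stays $>1$ so the elementary tail bound applies, and that the final exponent is exactly $\tau$. The remainder is a matter of choosing $\varepsilon$ small enough that the strict Okamoto--Onozuka inequalities survive the $M_i^{\varepsilon}$ losses. The conceptual point that makes everything work is that the $r$ chains interact only through their top entries $M_i$, which is what permits the reduction to a single MT-type series.
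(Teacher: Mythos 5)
Your argument is correct, and the first half --- collapsing the final block from the inside out with the tail bound $\sum_{n\ge1}(M+n)^{-\sigma}\le(\sigma-1)^{-1}M^{-(\sigma-1)}$, each step licensed by one of the hypotheses $\sum_{i=0}^{k}\Re(s_{r+1,n_{r+1}-i})>k+1$, leaving a factor $N^{-\tau}$ --- is exactly what the paper does. Where you genuinely diverge is in the decoupling step. You sum out the lower chain variables $m_{i,1}<\cdots<m_{i,n_i-1}$ into $(\log M_i)^{n_i-1}\ll_\varepsilon M_i^{\varepsilon}$, obtaining a bona fide depth-$r$ Mordell--Tornheim series, and then invoke the Okamoto--Onozuka convergence criterion as a black box; the $\varepsilon$-bookkeeping you sketch ($\tau>0$ absorbing $j\varepsilon$ in case (i), $\tau>1$ absorbing the deficit from $\Re(s_{1,1})=0$ plus $j\varepsilon$ in case (ii)) checks out. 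The paper instead keeps the chains intact: it first bounds every chain exponent below by $1$, and then splits the central factor by hand, via $N^{-\tau}\le\prod_{v}M_v^{-\tau/r}$ in case (i) and $N^{-\tau}\le M_1^{-(1+\varepsilon)}\prod_{v\ge2}M_v^{-R}$ with $R=(\tau-1-\varepsilon)/(r-1)$ in case (ii), so that the whole sum factors into a product of convergent zeta values $\zeta(\{1\}^{n_i-1},1+R)$ (times $\zeta(1+\varepsilon)$ in case (ii)). Your route is slightly slicker but leans on the external Okamoto--Onozuka theorem, which the paper cites for the Miyagawa-type proposition but deliberately avoids here; the paper's split of the central factor keeps the proof self-contained at the cost of the somewhat ad hoc choice of $R$. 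Both establish the stated sufficient conditions.
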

Note that these conditions are not a necessary condition  for absolute convergence. However, we mainly deal with the case when (ii) is satisfied.
\begin{proof}
The series $\sum_{n=1}^{\infty} \frac{1}{(N+n)^\sigma}\ (N>0)$ converges only when $\sigma>1$, and

\begin{equation*}
\sum_{n=1}^{\infty} \frac{1}{(N+n)^\sigma} \le \frac{1}{(\sigma - 1)N^{\sigma - 1}}.
\end{equation*}
Let $\Re(\mathbf{s}_{i}) = (\Re(s_{i,1}), \dots,\Re(s_{i,n_{i}}))\ (1 \le i \le r+1)$ and
\[\mathbf{s}^{(k)} = (\Re(s_{r+1,1}), \dots  ,  \Re(s_{r+1,n_{r+1}-k-1}), \sum_{i=0}^{k}\Re(s_{r+1,n_{r+1}-i} )  - k).\]
\begin{enumerate}[{\rm (i)}]
\item 
We have
\begin{align*}
&\zeta_{MT}(\Re(\mathbf{s}_{1}),\Re(\mathbf{s}_{2}),\dots,\Re(\mathbf{s}_{r});\Re(\mathbf{s}_{r+1})) \\
&\le \zeta_{MT}((\{1\}^{n_{1}}),\dots,(\{1\}^{n_{r}});\Re(\mathbf{s}_{r+1})) \\
&\ll \zeta_{MT}((\{1\}^{n_{1}}),\dots,(\{1\}^{n_{r}}); \mathbf{s}^{(1)}) \\
&\ll \zeta_{MT}((\{1\}^{n_{1}}),\dots,(\{1\}^{n_{r}}); \mathbf{s}^{(2)}) \\
&\ll \cdots \\
&\ll \zeta_{MT}((\{1\}^{n_{1}}),\dots,(\{1\}^{n_{r}});\sum_{i=0}^{n_{r+1}-1} \Re(s_{r+1,n_{r+1} -i}) -n_{r+1} + 1),
\end{align*}
where the implicit constants of $\ll$ depend on $\mathbf{s}_{r+1} =(s_{r+1,1}, \dots, s_{r+1,n_{r+1}})$.
Let $R=(\sum_{i=0}^{n_{r+1}-1} \Re(s_{r+1,n_{r+1} -i})-n_{r+1} + 1) / r $. Then $R>0$ and 
\[\frac{1}{(\sum_{v=1}^{r} m_{v,n_{v}})^{\sum_{i=0}^{n_{r+1}-1} \Re(s_{r+1,n_{r+1} -i})-n_{r+1} + 1}} \le \frac{1}{\prod_{v=1}^{r} m_{v,n_{v}}^{R}}.\]
Hence we have
\begin{align*}
&\zeta_{MT}(\Re(\mathbf{s}_{1}),\Re(\mathbf{s}_{2}),\dots,\Re(\mathbf{s}_{r});\Re(\mathbf{s}_{r+1})) \\
&\ll \zeta(\{1\}^{n_{1}-1},1+R) \zeta(\{1\}^{n_{2}-1},1+R) \cdots \zeta(\{1\}^{n_{r}-1},1+R).
\end{align*}
This completes the proof for the case (i).
\item In the same way as that of (i), we obtain
\begin{align*}
&\zeta_{MT}(0, \Re(\mathbf{s}_{2}),\dots,\Re(\mathbf{s}_{r});\Re(\mathbf{s}_{r+1})) \\
&\ll \zeta_{MT}(0,(\{1\}^{n_{2}}),\dots,(\{1\}^{n_{r}});\sum_{i=0}^{n_{r+1}-1} \Re(s_{r+1,n_{r+1} -i}) -n_{r+1} + 1).
\end{align*}
Let $\varepsilon > 0$ such that $\sum_{i=0}^{n_{r+1}-1} \Re(s_{r+1,n_{r+1} -i}) -n_{r+1} > \varepsilon$ and let \[R=\frac{\sum_{i=0}^{n_{r+1}-1} \Re(s_{r+1,n_{r+1} -i})-n_{r+1} -\varepsilon }{ r-1} \ (> 0).\]
By using
\[\frac{1}{(\sum_{v=1}^{r} m_{v,n_{v}})^{\sum_{i=0}^{n_{r+1}-1} \Re(s_{r+1,n_{r+1} -i})-n_{r+1} + 1}} \le \frac{1}{(m_{1,1})^{1+\varepsilon} \prod_{v=2}^{r} m_{v,n_{v}}^{R}},\]
we have
\begin{align*}
&\zeta_{MT}(0, \Re(\mathbf{s}_{2}),\dots,\Re(\mathbf{s}_{r});\Re(\mathbf{s}_{r+1})) \\
&\ll \zeta(1+\varepsilon) \zeta(\{1\}^{n_{2}-1},1+R) \cdots \zeta(\{1\}^{n_{r}-1},1+R).
\end{align*}
\end{enumerate}
This completes the proof.
\end{proof}
\begin{remark}
Let \[A={\begin{tikzpicture}[thick, baseline=-32mm]
\def\haba{0.9}

\node (1n_1) at (0,0) {$1$};
\node (2n_1) at ($(1n_1)+(0,-\haba)$) {$1$};
\node (3n_1) at ($(1n_1)+(\haba,-\haba)$) {$1$};
\node (4n_1) at ($(1n_1)+(7*\haba,0)$) {$$};

\node (1n_2) at ($(3n_1)+(0.5*\haba,-0.5*\haba)$) {$1$};
\node (2n_2) at ($(0,-\haba)+(1n_2)$) {$1$};
\node (3n_2) at ($(\haba,-\haba)+(1n_2)$){$1$};

\node (1n_r) at ($(3n_2)+2*(\haba,-\haba)$) {$1$};
\node (2n_r) at ($(0,-\haba)+(1n_r)$) {$1$};
\node (3n_r) at ($(\haba,-\haba)+(1n_r)$){$1$};
\coordinate (4n_r) at ($(\haba,0)+(1n_r)$);

\node (11n_r+1) at ($(1n_1)+(0,-6*\haba)$) {$1$};
\node (21n_r+1) at ($(11n_r+1)+(0,-0.5*\haba)$) {$1$};
\node (31n_r+1) at ($(21n_r+1)+(0,-\haba)$) {$1$};
\node (12n_r+1) at ($(3n_r)+(0,-0.5*\haba)$){$1$};
\node (22n_r+1) at ($(12n_r+1)+(0,-0.5*\haba)$){$1$};
\node (23n_r+1) at ($(12n_r+1)+(0.5*\haba,-0.5*\haba)$){$1$};
\node (32n_r+1) at ($(22n_r+1)+(0,-\haba)$){$1$};
\node (33n_r+1) at ($(23n_r+1)+(\haba,-\haba)$){$1$};
\coordinate (13n_r+1) at ($(1.5*\haba,0)+(12n_r+1)$);

\node (00) at ($(1n_1)+(0.5*\haba, -4.5*\haba)$){\fontsize{30pt}{0pt}\selectfont $0$};
\node (01) at ($(1n_1)+(6*\haba,-\haba)$){\fontsize{30pt}{0pt}\selectfont $0$};

\def\mdot{\node [circle, draw, black, fill=black,
inner sep=0pt, minimum width=0.1mm, label distance=0.5mm]}
\mdot (d1n_1) at ($(1n_1)!0.35!(3n_1)$) {};
\mdot (d2n_1) at ($(1n_1)!0.5!(3n_1)$) {};
\mdot (d3n_1) at ($(1n_1)!0.65!(3n_1)$) {};
\mdot (d4n_1) at ($(2n_1)!0.35!(3n_1)$) {};
\mdot (d5n_1) at ($(2n_1)!0.5!(3n_1)$) {};
\mdot (d6n_1) at ($(2n_1)!0.65!(3n_1)$) {};
\mdot (d7n_1) at ($(1n_1)!0.35!(2n_1)$) {};
\mdot (d8n_1) at ($(1n_1)!0.5!(2n_1)$) {};
\mdot (d9n_1) at ($(1n_1)!0.65!(2n_1)$) {};

\mdot (d1n_2) at ($(1n_2)!0.35!(3n_2)$) {};
\mdot (d2n_2) at ($(1n_2)!0.5!(3n_2)$) {};
\mdot (d3n_2) at ($(1n_2)!0.65!(3n_2)$) {};
\mdot (d4n_2) at ($(2n_2)!0.35!(3n_2)$) {};
\mdot (d5n_2) at ($(2n_2)!0.5!(3n_2)$) {};
\mdot (d6n_2) at ($(2n_2)!0.65!(3n_2)$) {};
\mdot (d7n_2) at ($(1n_2)!0.35!(2n_2)$) {};
\mdot (d8n_2) at ($(1n_2)!0.5!(2n_2)$) {};
\mdot (d9n_2) at ($(1n_2)!0.65!(2n_2)$) {};

\mdot (d1n_r) at ($(3n_2)!0.3!(1n_r)$) {};
\mdot (d2n_r) at ($(3n_2)!0.5!(1n_r)$) {};
\mdot (d3n_r) at ($(3n_2)!0.7!(1n_r)$) {};

\mdot (d1n_r) at ($(1n_r)!0.35!(3n_r)$) {};
\mdot (d2n_r) at ($(1n_r)!0.5!(3n_r)$) {};
\mdot (d3n_r) at ($(1n_r)!0.65!(3n_r)$) {};
\mdot (d4n_r) at ($(2n_r)!0.35!(3n_r)$) {};
\mdot (d5n_r) at ($(2n_r)!0.5!(3n_r)$) {};
\mdot (d6n_r) at ($(2n_r)!0.65!(3n_r)$) {};
\mdot (d7n_r) at ($(1n_r)!0.35!(2n_r)$) {};
\mdot (d8n_r) at ($(1n_r)!0.5!(2n_r)$) {};
\mdot (d9n_r) at ($(1n_r)!0.65!(2n_r)$) {};

\mdot (d1n_r+1) at ($(22n_r+1)!0.35!(32n_r+1)$) {};
\mdot (d1n_r+1) at ($(22n_r+1)!0.5!(32n_r+1)$) {};
\mdot (d3n_r+1) at ($(22n_r+1)!0.65!(32n_r+1)$) {};
\mdot (d4n_r+1) at ($(23n_r+1)!0.35!(33n_r+1)$) {};
\mdot (d5n_r+1) at ($(23n_r+1)!0.5!(33n_r+1)$) {};
\mdot (d6n_r+1) at ($(23n_r+1)!0.65!(33n_r+1)$) {};
\mdot (d7n_r+1) at ($(32n_r+1)!0.3!(33n_r+1)$) {};
\mdot (d8n_r+1) at ($(32n_r+1)!0.5!(33n_r+1)$) {};
\mdot (d9n_r+1) at ($(32n_r+1)!0.7!(33n_r+1)$) {};

\mdot (dv1) at ($(21n_r+1)!0.35!(31n_r+1)$) {};
\mdot (dv2) at ($(21n_r+1)!0.5!(31n_r+1)$) {};
\mdot (dv3) at ($(21n_r+1)!0.65!(31n_r+1)$) {};

\mdot (d11) at ($(11n_r+1)!0.1!(12n_r+1)$) {};
\mdot (d12) at ($(11n_r+1)!0.2!(12n_r+1)$) {};
\mdot (d13) at ($(11n_r+1)!0.3!(12n_r+1)$) {};
\mdot (d14) at ($(11n_r+1)!0.4!(12n_r+1)$) {};
\mdot (d15) at ($(11n_r+1)!0.5!(12n_r+1)$) {};
\mdot (d16) at ($(11n_r+1)!0.6!(12n_r+1)$) {};
\mdot (d17) at ($(11n_r+1)!0.7!(12n_r+1)$) {};
\mdot (d18) at ($(11n_r+1)!0.8!(12n_r+1)$) {};
\mdot (d19) at ($(11n_r+1)!0.9!(12n_r+1)$) {};

\mdot (d21) at ($(21n_r+1)!0.1!(22n_r+1)$) {};
\mdot (d22) at ($(21n_r+1)!0.2!(22n_r+1)$) {};
\mdot (d23) at ($(21n_r+1)!0.3!(22n_r+1)$) {};
\mdot (d24) at ($(21n_r+1)!0.4!(22n_r+1)$) {};
\mdot (d25) at ($(21n_r+1)!0.5!(22n_r+1)$) {};
\mdot (d26) at ($(21n_r+1)!0.6!(22n_r+1)$) {};
\mdot (d27) at ($(21n_r+1)!0.7!(22n_r+1)$) {};
\mdot (d28) at ($(21n_r+1)!0.8!(22n_r+1)$) {};
\mdot (d29) at ($(21n_r+1)!0.9!(22n_r+1)$) {};

\mdot (d31) at ($(31n_r+1)!0.1!(32n_r+1)$) {};
\mdot (d32) at ($(31n_r+1)!0.2!(32n_r+1)$) {};
\mdot (d33) at ($(31n_r+1)!0.3!(32n_r+1)$) {};
\mdot (d34) at ($(31n_r+1)!0.4!(32n_r+1)$) {};
\mdot (d35) at ($(31n_r+1)!0.5!(32n_r+1)$) {};
\mdot (d36) at ($(31n_r+1)!0.6!(32n_r+1)$) {};
\mdot (d37) at ($(31n_r+1)!0.7!(32n_r+1)$) {};
\mdot (d38) at ($(31n_r+1)!0.8!(32n_r+1)$) {};
\mdot (d39) at ($(31n_r+1)!0.9!(32n_r+1)$) {};

\draw[decorate, decoration={brace, mirror, amplitude = \BRACEAMPLITUDE}] ($(1n_1)+(-0.1,0.15)$) -- ($(2n_1)-(0.1,0.15)$) node[xshift = -4.5-4.5mm, yshift = 0mm, pos = 0.5] {\INDEX_SIZE$n_{1}$};
\draw[decorate, decoration={brace, mirror, amplitude = \BRACEAMPLITUDE}] ($(2n_1)+(-0.05,-0.2)$) -- ($(3n_1)+(0.05,-0.2)$) node[xshift = 0mm, yshift = -4.5mm, pos = 0.5] {\INDEX_SIZE$n_{1}$};

\draw[decorate, decoration={brace, mirror, amplitude = \BRACEAMPLITUDE}] ($(1n_2)+(-0.1,0.15)$) -- ($(2n_2)-(0.1,0.15)$) node[xshift = -4.5mm, yshift = 0mm, pos = 0.5] {\INDEX_SIZE$n_{2}$};
\draw[decorate, decoration={brace, mirror, amplitude = \BRACEAMPLITUDE}] ($(2n_2)+(-0.05,-0.2)$) -- ($(3n_2)+(0.05,-0.2)$) node[xshift = 0mm, yshift = -4.5mm, pos = 0.5] {\INDEX_SIZE$n_{2}$};

\draw[decorate, decoration={brace, mirror, amplitude = \BRACEAMPLITUDE}] ($(1n_r)+(-0.1,0.15)$) -- ($(2n_r)+(-0.1,-0.15)$) node[xshift = -5mm, yshift = 0mm, pos = 0.5] {\INDEX_SIZE$n_{r}$};
\draw[decorate, decoration={brace, mirror, aspect=0.3, amplitude = \BRACEAMPLITUDE}] ($(2n_r)+(-0.05,-0.2)$) -- ($(3n_r)+(0.05,-0.2)$) node[xshift = -2mm, yshift = -4.5mm, pos = 0.5] {\INDEX_SIZE$n_{r}$};

\draw[decorate, decoration={brace, mirror, aspect=0.6, amplitude = \BRACEAMPLITUDE}] ($(12n_r+1)+(-0.1,0.15)$) -- ($(32n_r+1)-(0.1,0.15)$) node[xshift = -6.5-4mm, yshift = -2mm, pos = 0.5] {\INDEX_SIZE$n_{r+1}$};
\draw[decorate, decoration={brace,mirror, amplitude = \BRACEAMPLITUDE}] ($(32n_r+1)+(-0.05,-0.2)$) -- ($(33n_r+1)+(0.05,-0.2)$) node[xshift = 0mm, yshift = -4.5mm, pos = 0.5] {\INDEX_SIZE$n_{r+1}$};

\draw [bend right,distance=35] ($(1n_1)+(-0.15,0.3)$) to ($(31n_r+1)+(-0.15,-0.3)$); 
\draw [bend left,distance=35] ($(4n_1)+(0.15,0.3)$) to ($(33n_r+1)+(0.15,-0.3)$); 
\end{tikzpicture}}
.\]
Then we have 
\[\zeta_{MT}(\mathbf{s}_{1},\mathbf{s}_{2},\dots,\mathbf{s}_{r};\mathbf{s}_{r+1}) = \zeta(\mathbf{s}_{1},\mathbf{s}_{2},\dots,\mathbf{s}_{r},\mathbf{s}_{r+1};A).\]
Therefore, the function (\ref{eq:defU}) is included as a special case of the function (\ref{eq:Mzeta}). From Remark \ref{re:matrixnotinuas}, the function (\ref{eq:defU}) can be continued meromorphically to the whole space $\mathbb{C}^{n_{1}+\dots+n_{r+1}}$. Moreover, by Lemma \ref{le:Aintrep}, especially when the condition (i) of Proposition~\ref{pr:Uconv} is satisfied, we have
\begin{align*}
&\zeta_{MT}(\mathbf{s}_{1},\dots,\mathbf{s}_{r};\mathbf{s}_{r+1}) \\
&=\frac{1}{\prod_{i=1}^{r+1}\prod_{j=1}^{n_{i}}\Gamma(s_{i,j})}\int_{0}^{\infty}\cdots\int_{0}^{\infty}\prod_{i=1}^{r} \prod_{j=1}^{n_{i}} \frac{t_{i,j}^{s_{i,j}-1}\,dt_{i,j}}{e^{t_{i,j}+\dots +t_{i,n_{i}} + t_{r+1,1} + \dots +t_{r+1,n_{r+1}}} -1} \\
&\quad\times\,t_{r+1,1}^{s_{r+1,1} -1}\,dt_{r+1,1} \,\prod_{u=2}^{n_{r+1}} \frac{t_{r+1,u}^{s_{r+1,u}-1}\,dt_{r+1,u}}{e^{t_{r+1,u} + \dots + t_{r+1,n_{r+1}}} -1}.
\end{align*}
Let \[A={\begin{tikzpicture}[thick, baseline=-26mm]
\def\haba{0.9}
\node (1n_2) at (0,0) {$1$};
\node (2n_2) at ($(0,-\haba)+(1n_2)$) {$1$};
\node (3n_2) at ($(\haba,-\haba)+(1n_2)$){$1$};
\node (4n_2) at ($(1n_2)+(5.5*\haba,0)$) {$$};

\node (1n_r) at ($(3n_2)+2*(\haba,-\haba)$) {$1$};
\node (2n_r) at ($(0,-\haba)+(1n_r)$) {$1$};
\node (3n_r) at ($(\haba,-\haba)+(1n_r)$){$1$};
\coordinate (4n_r) at ($(\haba,0)+(1n_r)$);

\node (11n_r+1) at ($(1n_2)+(-0.5*\haba,-4.5*\haba)$) {$1$};
\node (21n_r+1) at ($(11n_r+1)+(0,-0.5*\haba)$) {$1$};
\node (31n_r+1) at ($(21n_r+1)+(0,-\haba)$) {$1$};
\node (12n_r+1) at ($(3n_r)+(0,-0.5*\haba)$){$1$};
\node (22n_r+1) at ($(12n_r+1)+(0,-0.5*\haba)$){$1$};
\node (23n_r+1) at ($(12n_r+1)+(0.5*\haba,-0.5*\haba)$){$1$};
\node (32n_r+1) at ($(22n_r+1)+(0,-\haba)$){$1$};
\node (33n_r+1) at ($(23n_r+1)+(\haba,-\haba)$){$1$};
\coordinate (13n_r+1) at ($(1.5*\haba,0)+(12n_r+1)$);

\node (01n_2) at ($(1n_1)+(-0.5*\haba,0)$) {$0$};
\node (02n_2) at ($(11n_r+1)+(0,0.5*\haba)$) {$0$};

\node (00) at ($(1n_2)+(0.5*\haba, -3.3*\haba)$){\fontsize{30pt}{0pt}\selectfont $0$};
\node (01) at ($(1n_2)+(4.5*\haba,-1*\haba)$){\fontsize{30pt}{0pt}\selectfont $0$};

\def\mdot{\node [circle, draw, black, fill=black,
inner sep=0pt, minimum width=0.1mm, label distance=0.5mm]}

\mdot (d1n_2) at ($(1n_2)!0.35!(3n_2)$) {};
\mdot (d2n_2) at ($(1n_2)!0.5!(3n_2)$) {};
\mdot (d3n_2) at ($(1n_2)!0.65!(3n_2)$) {};
\mdot (d4n_2) at ($(2n_2)!0.35!(3n_2)$) {};
\mdot (d5n_2) at ($(2n_2)!0.5!(3n_2)$) {};
\mdot (d6n_2) at ($(2n_2)!0.65!(3n_2)$) {};
\mdot (d7n_2) at ($(1n_2)!0.35!(2n_2)$) {};
\mdot (d8n_2) at ($(1n_2)!0.5!(2n_2)$) {};
\mdot (d9n_2) at ($(1n_2)!0.65!(2n_2)$) {};

\mdot (d1n_r) at ($(3n_2)!0.3!(1n_r)$) {};
\mdot (d2n_r) at ($(3n_2)!0.5!(1n_r)$) {};
\mdot (d3n_r) at ($(3n_2)!0.7!(1n_r)$) {};

\mdot (d1n_r) at ($(1n_r)!0.35!(3n_r)$) {};
\mdot (d2n_r) at ($(1n_r)!0.5!(3n_r)$) {};
\mdot (d3n_r) at ($(1n_r)!0.65!(3n_r)$) {};
\mdot (d4n_r) at ($(2n_r)!0.35!(3n_r)$) {};
\mdot (d5n_r) at ($(2n_r)!0.5!(3n_r)$) {};
\mdot (d6n_r) at ($(2n_r)!0.65!(3n_r)$) {};
\mdot (d7n_r) at ($(1n_r)!0.35!(2n_r)$) {};
\mdot (d8n_r) at ($(1n_r)!0.5!(2n_r)$) {};
\mdot (d9n_r) at ($(1n_r)!0.65!(2n_r)$) {};

\mdot (d1n_r+1) at ($(22n_r+1)!0.35!(32n_r+1)$) {};
\mdot (d1n_r+1) at ($(22n_r+1)!0.5!(32n_r+1)$) {};
\mdot (d3n_r+1) at ($(22n_r+1)!0.65!(32n_r+1)$) {};
\mdot (d4n_r+1) at ($(23n_r+1)!0.35!(33n_r+1)$) {};
\mdot (d5n_r+1) at ($(23n_r+1)!0.5!(33n_r+1)$) {};
\mdot (d6n_r+1) at ($(23n_r+1)!0.65!(33n_r+1)$) {};
\mdot (d7n_r+1) at ($(32n_r+1)!0.3!(33n_r+1)$) {};
\mdot (d8n_r+1) at ($(32n_r+1)!0.5!(33n_r+1)$) {};
\mdot (d9n_r+1) at ($(32n_r+1)!0.7!(33n_r+1)$) {};

\mdot (d11) at ($(01n_2)!0.1!(02n_2)$) {};
\mdot (d12) at ($(01n_2)!0.2!(02n_2)$) {};
\mdot (d13) at ($(01n_2)!0.3!(02n_2)$) {};
\mdot (d14) at ($(01n_2)!0.4!(02n_2)$) {};
\mdot (d15) at ($(01n_2)!0.5!(02n_2)$) {};
\mdot (d16) at ($(01n_2)!0.6!(02n_2)$) {};
\mdot (d17) at ($(01n_2)!0.7!(02n_2)$) {};
\mdot (d18) at ($(01n_2)!0.8!(02n_2)$) {};
\mdot (d19) at ($(01n_2)!0.9!(02n_2)$) {};

\mdot (dv1) at ($(21n_r+1)!0.35!(31n_r+1)$) {};
\mdot (dv2) at ($(21n_r+1)!0.5!(31n_r+1)$) {};
\mdot (dv3) at ($(21n_r+1)!0.65!(31n_r+1)$) {};

\mdot (d11) at ($(11n_r+1)!0.1!(12n_r+1)$) {};
\mdot (d12) at ($(11n_r+1)!0.2!(12n_r+1)$) {};
\mdot (d13) at ($(11n_r+1)!0.3!(12n_r+1)$) {};
\mdot (d14) at ($(11n_r+1)!0.4!(12n_r+1)$) {};
\mdot (d15) at ($(11n_r+1)!0.5!(12n_r+1)$) {};
\mdot (d16) at ($(11n_r+1)!0.6!(12n_r+1)$) {};
\mdot (d17) at ($(11n_r+1)!0.7!(12n_r+1)$) {};
\mdot (d18) at ($(11n_r+1)!0.8!(12n_r+1)$) {};
\mdot (d19) at ($(11n_r+1)!0.9!(12n_r+1)$) {};

\mdot (d21) at ($(21n_r+1)!0.1!(22n_r+1)$) {};
\mdot (d22) at ($(21n_r+1)!0.2!(22n_r+1)$) {};
\mdot (d23) at ($(21n_r+1)!0.3!(22n_r+1)$) {};
\mdot (d24) at ($(21n_r+1)!0.4!(22n_r+1)$) {};
\mdot (d25) at ($(21n_r+1)!0.5!(22n_r+1)$) {};
\mdot (d26) at ($(21n_r+1)!0.6!(22n_r+1)$) {};
\mdot (d27) at ($(21n_r+1)!0.7!(22n_r+1)$) {};
\mdot (d28) at ($(21n_r+1)!0.8!(22n_r+1)$) {};
\mdot (d29) at ($(21n_r+1)!0.9!(22n_r+1)$) {};

\mdot (d31) at ($(31n_r+1)!0.1!(32n_r+1)$) {};
\mdot (d32) at ($(31n_r+1)!0.2!(32n_r+1)$) {};
\mdot (d33) at ($(31n_r+1)!0.3!(32n_r+1)$) {};
\mdot (d34) at ($(31n_r+1)!0.4!(32n_r+1)$) {};
\mdot (d35) at ($(31n_r+1)!0.5!(32n_r+1)$) {};
\mdot (d36) at ($(31n_r+1)!0.6!(32n_r+1)$) {};
\mdot (d37) at ($(31n_r+1)!0.7!(32n_r+1)$) {};
\mdot (d38) at ($(31n_r+1)!0.8!(32n_r+1)$) {};
\mdot (d39) at ($(31n_r+1)!0.9!(32n_r+1)$) {};

\draw[decorate, decoration={brace, mirror, aspect=0.55, amplitude = \BRACEAMPLITUDE}] ($(1n_2)+(-0.1,0.15)$) -- ($(2n_2)-(0.1,0.15)$) node[xshift = -5.4mm, yshift = -1mm, pos = 0.5] {\INDEX_SIZE$n_{2}$};
\draw[decorate, decoration={brace, mirror, amplitude = \BRACEAMPLITUDE}] ($(2n_2)+(-0.05,-0.2)$) -- ($(3n_2)+(0.05,-0.2)$) node[xshift = 0mm, yshift = -4.5mm, pos = 0.5] {\INDEX_SIZE$n_{2}$};

\draw[decorate, decoration={brace, mirror, amplitude = \BRACEAMPLITUDE}] ($(1n_r)+(-0.1,0.15)$) -- ($(2n_r)+(-0.1,-0.15)$) node[xshift = -5mm, yshift = 0mm, pos = 0.5] {\INDEX_SIZE$n_{r}$};
\draw[decorate, decoration={brace, mirror, aspect=0.3, amplitude = \BRACEAMPLITUDE}] ($(2n_r)+(-0.05,-0.2)$) -- ($(3n_r)+(0.05,-0.2)$) node[xshift = -2mm, yshift = -5mm, pos = 0.5] {\INDEX_SIZE$n_{r}$};

\draw[decorate, decoration={brace, mirror, aspect=0.6, amplitude = \BRACEAMPLITUDE}] ($(12n_r+1)+(-0.1,0.15)$) -- ($(32n_r+1)-(0.1,0.15)$) node[xshift = -6.5-4mm, yshift = -2mm, pos = 0.5] {\INDEX_SIZE$n_{r+1}$};
\draw[decorate, decoration={brace,mirror, amplitude = \BRACEAMPLITUDE}] ($(32n_r+1)+(-0.05,-0.2)$) -- ($(33n_r+1)+(0.05,-0.2)$) node[xshift = 0mm, yshift = -4.5mm, pos = 0.5] {\INDEX_SIZE$n_{r+1}$};

\draw [bend right,distance=35] ($(01n_2)+(-0.15,0.3)$) to ($(31n_r+1)+(-0.15,-0.3)$); 
\draw [bend left,distance=35] ($(4n_2)+(0.15,0.3)$) to ($(33n_r+1)+(0.15,-0.3)$); 
\end{tikzpicture}}
.\]
Then we have
\[\zeta_{MT}(0,\mathbf{s}_{2},\dots,\mathbf{s}_{r};\mathbf{s}_{r+1}) = \zeta(0,\mathbf{s}_{2},\dots,\mathbf{s}_{r},\mathbf{s}_{r+1};A).\]
Therefore, by Lemma \ref{le:Aintrep}, especially when the condition (ii) of Proposition \ref{pr:Uconv} is satisfied, we have
\begin{align}
&\zeta_{MT}(0,\mathbf{s}_{2},\dots,\mathbf{s}_{r};\mathbf{s}_{r+1}) \label{eq:Uintrep}\\
&=\frac{1}{\prod_{i=2}^{r+1}\prod_{j=1}^{n_{i}}\Gamma(s_{i,j})}\int_{0}^{\infty}\cdots\int_{0}^{\infty} \prod_{i=2}^{r} \prod_{j=1}^{n_{i}} \frac{t_{i,j}^{s_{i,j}-1}\,dt_{i,j}}{e^{t_{i,j}+\dots +t_{i,n_{i}} + t_{r+1,1} + \dots +t_{r+1,n_{r+1}}} -1} \nonumber \\
&\quad\times\,\prod_{u=1}^{n_{r+1}} \frac{t_{r+1,u}^{s_{r+1,u}-1}\,dt_{r+1,u}}{e^{t_{r+1,u} + \dots + t_{r+1,n_{r+1}}} -1}. \nonumber 
\end{align}
\end{remark}
\begin{remark}For $z \in \mathbb{C}$ with $|z| <1$, let
\begin{align*}
&{\rm Li}_{\mathbf{k}_{1},\dots,\mathbf{k}_{r};\mathbf{k}_{r+1}}(z) \\
&=\sum_{\substack{0<m_{1,1}<m_{1,2}<\dots<m_{1,n_{1}} \\ \vdots \\ 0<m_{r,1}<m_{r,2}<\dots<m_{r,n_{r}}}} \, \sum_{m_{r+1,1}=1,\dots ,m_{r+1,n_{r+1}-1}=1}^{\infty} \\
&\quad\frac{z^{\sum_{v=1}^{r} m_{v,n_{v}} +\sum_{w=1}^{n_{r+1}-1} m_{r+1,w}}}{\prod_{i=1}^{r} \prod_{j=1}^{n_{i}} m_{i,j}^{k_{i,j}} \prod_{u=1}^{n_{r+1}} (\sum_{v=1}^{r} m_{v,n_{v}} +\sum_{w=1}^{u-1} m_{r+1,w})^{k_{r+1,u}}}.\\
\end{align*}
Then we obtain
\begin{align*}
&\frac{d}{dz}{\rm {Li}}_{\mathbf{k}_{1},\dots,\mathbf{k}_{r};\mathbf{k}_{r+1}}(z)\\
&=\begin{cases}
\displaystyle
\frac{1}{z} {\rm Li}_{\mathbf{k}_{1},\dots,\mathbf{k}_{r};(\mathbf{k}_{r+1})_{-}}(z)
&({\rm if}\ k_{r+1,n_{r+1}} > 1),\\
\displaystyle
\frac{1}{1-z} {\rm Li}_{\mathbf{k}_{1},\dots,\mathbf{k}_{r};k_{r+1,1},\dots,k_{r+1,n_{r+1}-1}}(z)
&({\rm if}\ k_{r+1,n_{r+1}} = 1, n_{r+1} > 1),\\
\displaystyle
\frac{1}{z} \prod_{i=1}^{r}{\rm Li}_{\mathbf{k}_i}(z)
&({\rm if}\ k_{r+1,n_{r+1}} =1, n_{r+1} = 1).
\end{cases}
\end{align*}
Therefore, the special values of the GMT-type MZF are written as follows:
\begin{align*}
&\zeta_{MT}(\mathbf{k}_{1}, \dots, \mathbf{k}_{r}; \mathbf{k}_{r+1}) \\
&=\int_{0<t_{1}<\dots<t_{k_{r+1,1} +\dots+k_{r+1,n_{r+1}}}<1} \left(\prod_{i=2}^{k_{r+1,n_{r+1}}}\frac{1}{t_{k_{r+1,1} +\dots+k_{r+1,n_{r+1}-1} +i}}\right) \\
&\quad\times\frac{1}{1-t_{k_{r+1,1} +\dots+k_{r+1,n_{r+1}-1} +1}} \left(\prod_{i=2}^{k_{r+1,n_{r+1}-1}}\frac{1}{t_{k_{r+1,1} +\dots+k_{r+1,n_{r+1}-2} +i}}\right)\cdots\\
&\quad\dots \times\frac{1}{1-t_{k_{r+1,1}+k_{r+1,2}+1}} \left(\prod_{i=2}^{k_{r+1,2}}\frac{1}{t_{k_{r+1,1}+i}}\right)\\
&\quad\times\frac{1}{1-t_{k_{r+1,1}+1}} \left(\prod_{i=1}^{t_{k_{r+1,1}}}\frac{1}{t_{i}}\right)\prod_{i=1}^{r}{\rm Li}_{\mathbf{k}_i}(t_{1})\,dt_{1}\cdots dt_{k_{r+1,1}+\dots+k_{r+1,n_{r+1}}} \\
&= I\left(
\begin{tikzpicture}[thick, baseline=-55mm]
{
\Whitecircle (E0_0_0) at (0,0) {};
\Whitecircle (E0_0_1) at ($(E0_0_0) + (0, -\DOT_DISTANCE)$) {};
\drawvdots{E0_0_0}{E0_0_1};
\Blackcircle (E0_0_2) at ($(E0_0_1) + (0, -\LINE_DISTANCE)$) {};
\draw [line width = \LINEWIDTH] (E0_0_1) -- (E0_0_2);
\Blackcircle (E0_0_3) at ($(E0_0_2) + (0, -\DOT_DISTANCE)$) {};
\drawvdots{E0_0_2}{E0_0_3};
\Whitecircle (E0_0_4) at ($(E0_0_3) + (0, -\LINE_DISTANCE)$) {};
\draw [line width = \LINEWIDTH] (E0_0_3) -- (E0_0_4);
\Whitecircle (E0_0_5) at ($(E0_0_4) + (0, -\DOT_DISTANCE)$) {};
\drawvdots{E0_0_4}{E0_0_5};
\Blackcircle (E0_0_6) at ($(E0_0_5) + (0, -\LINE_DISTANCE)$) {};
\draw [line width = \LINEWIDTH] (E0_0_5) -- (E0_0_6);
\Whitecircle (E0_0_7) at ($(E0_0_6) + (0, -\LINE_DISTANCE)$) {};
\draw [line width = \LINEWIDTH] (E0_0_6) -- (E0_0_7);
\Whitecircle (E0_0_8) at ($(E0_0_7) + (0, -\DOT_DISTANCE)$) {};
\drawvdots{E0_0_7}{E0_0_8};
\def\branch{E0_0_8};
\Whitecircle (E1_0_0) at ($(\branch) + ( -2 *  \BRANCH_WIDTH, -\LINE_DISTANCE)$) {};
\draw  [line width = \LINEWIDTH](\branch) -- (E1_0_0);
\Whitecircle (E1_0_1) at ($(E1_0_0) + (0, - \DOT_DISTANCE)$) {};
\drawvdots{E1_0_0}{E1_0_1}
\Blackcircle (E1_0_2) at ($(E1_0_1) + (0, - \LINE_DISTANCE)$) {};
\draw [line width = \LINEWIDTH] (E1_0_1) -- (E1_0_2);
\Blackcircle (E1_0_3) at ($(E1_0_2) + (0, - \DOT_DISTANCE)$) {};
\drawvdots{E1_0_2}{E1_0_3}
\Whitecircle (E1_0_4) at ($(E1_0_3) + (0, -\LINE_DISTANCE)$) {};
\draw  [line width = \LINEWIDTH](E1_0_4) -- (E1_0_3);
\Whitecircle (E1_0_5) at ($(E1_0_4) + (0, - \DOT_DISTANCE)$) {};
\drawvdots{E1_0_4}{E1_0_5}
\Blackcircle (E1_0_6) at ($(E1_0_5) + (0, - \LINE_DISTANCE)$) {};
\draw [line width = \LINEWIDTH] (E1_0_5) -- (E1_0_6);
\Whitecircle (E3_0_0) at ($(\branch) + ( 2*\BRANCH_WIDTH, -\LINE_DISTANCE)$) {};
\draw [line width = \LINEWIDTH] (\branch) -- (E3_0_0);
\Whitecircle (E3_0_1) at ($(E3_0_0) + (0, - \DOT_DISTANCE)$) {};
\drawvdots{E3_0_0}{E3_0_1}
\Blackcircle (E3_0_2) at ($(E3_0_1) + (0, - \LINE_DISTANCE)$) {};
\draw [line width = \LINEWIDTH] (E3_0_1) -- (E3_0_2);
\Blackcircle (E3_0_3) at ($(E3_0_2) + (0, - \DOT_DISTANCE)$) {};
\drawvdots{E3_0_2}{E3_0_3}
\Whitecircle (E3_0_4) at ($(E3_0_3) + (0, -\LINE_DISTANCE)$) {};
\draw  [line width = \LINEWIDTH](E3_0_4) -- (E3_0_3);
\Whitecircle (E3_0_5) at ($(E3_0_4) + (0, - \DOT_DISTANCE)$) {};
\drawvdots{E3_0_4}{E3_0_5}
\Blackcircle (E3_0_6) at ($(E3_0_5) + (0, - \LINE_DISTANCE)$) {};
\draw [line width = \LINEWIDTH] (E3_0_5) -- (E3_0_6);
\Whitecircle (E2_0_0) at ($(E1_0_0)!\SECOND_EDGE_PROPOTION!(E3_0_0)$) {};
\draw  [line width = \LINEWIDTH] (\branch) -- (E2_0_0);
\Whitecircle (E2_0_1) at ($(E2_0_0) + (0, - \DOT_DISTANCE)$) {};
\drawvdots{E2_0_0}{E2_0_1}
\Blackcircle (E2_0_2) at ($(E2_0_1) + (0, - \LINE_DISTANCE)$) {};
\draw [line width = \LINEWIDTH] (E2_0_1) -- (E2_0_2);
\Blackcircle (E2_0_3) at ($(E2_0_2) + (0, - \DOT_DISTANCE)$) {};
\drawvdots{E2_0_2}{E2_0_3}
\Whitecircle (E2_0_4) at ($(E2_0_3) + (0, -\LINE_DISTANCE)$) {};
\draw  [line width = \LINEWIDTH](E2_0_4) -- (E2_0_3);
\Whitecircle (E2_0_5) at ($(E2_0_4) + (0, - \DOT_DISTANCE)$) {};
\drawvdots{E2_0_4}{E2_0_5}
\Blackcircle (E2_0_6) at ($(E2_0_5) + (0, - \LINE_DISTANCE)$) {};
\draw [line width = \LINEWIDTH] (E2_0_5) -- (E2_0_6);
\draw[decorate, decoration={brace, mirror, amplitude = \BRACEAMPLITUDE}]
($(E0_0_0) + (- \BRACEVSPACE, \BRACEHSPACE)$) -- ($(E0_0_1) + (- \BRACEVSPACE, - \BRACEHSPACE) $) node[xshift = -\BRACEXSIFT*1.7, yshift = 0mm, pos = 0.5] {\INDEX_SIZE$k_{r+1,n_{r+1}}-1$};
\draw[decorate, decoration={brace, mirror, amplitude = \BRACEAMPLITUDE}]
($(E0_0_4) + (- \BRACEVSPACE, \BRACEHSPACE)$) -- ($(E0_0_5) + (- \BRACEVSPACE, - \BRACEHSPACE) $) node[xshift = -\BRACEXSIFT*1.3, yshift = 0mm, pos = 0.5] {\INDEX_SIZE$k_{r+1,2}-1$};
\draw[decorate, decoration={brace, mirror, amplitude = \BRACEAMPLITUDE}]
($(E0_0_7) + (- \BRACEVSPACE, \BRACEHSPACE)$) -- ($(E0_0_8) + (- \BRACEVSPACE, - \BRACEHSPACE) $) node[xshift = -\BRACEXSIFT*1.3, yshift = 0mm, pos = 0.5] {\INDEX_SIZE$k_{r+1,1}$};
\draw[decorate, decoration={brace, mirror, amplitude = \BRACEAMPLITUDE}]
($(E1_0_0) + ( - \BRACEVSPACE, \BRACEHSPACE)$) -- ($(E1_0_1) + ( - \BRACEVSPACE, - \BRACEHSPACE) $) node[xshift = - 1.3*\BRACEXSIFT, yshift = 0mm, pos = 0.5] {\INDEX_SIZE$k_{1,n_{1}} - 1$};
\draw[decorate, decoration={brace, mirror, amplitude = \BRACEAMPLITUDE}]
($(E1_0_4) + ( - \BRACEVSPACE, \BRACEHSPACE)$) -- ($(E1_0_5) + ( - \BRACEVSPACE, - \BRACEHSPACE) $) node[xshift = - 1.3*\BRACEXSIFT, yshift = 0mm, pos = 0.5] {\INDEX_SIZE$k_{1,1} - 1$};
\draw[decorate, decoration={brace, amplitude = \BRACEAMPLITUDE}]
($(E2_0_0) + (\BRACEVSPACE, \BRACEHSPACE)$) -- ($(E2_0_1) + (\BRACEVSPACE, - \BRACEHSPACE) $) node[xshift = 1.3*\BRACEXSIFT, yshift = 0mm, pos = 0.5] {\INDEX_SIZE$k_{2,n_{2}} - 1$};
\draw[decorate, decoration={brace, amplitude = \BRACEAMPLITUDE}]
($(E2_0_4) + (\BRACEVSPACE, \BRACEHSPACE)$) -- ($(E2_0_5) + (\BRACEVSPACE, - \BRACEHSPACE) $) node[xshift = 1.3*\BRACEXSIFT, yshift = 0mm, pos = 0.5] {\INDEX_SIZE$k_{2,1} - 1$};
\draw[decorate, decoration={brace, amplitude = \BRACEAMPLITUDE}]
($(E3_0_0) + (\BRACEVSPACE, \BRACEHSPACE)$) -- ($(E3_0_1) + (\BRACEVSPACE, - \BRACEHSPACE) $) node[xshift = 1.3*\BRACEXSIFT, yshift = 0mm, pos = 0.5] {\INDEX_SIZE$k_{r,n_{r}} - 1$};
\draw[decorate, decoration={brace, amplitude = \BRACEAMPLITUDE}]
($(E3_0_4) + (\BRACEVSPACE, \BRACEHSPACE)$) -- ($(E3_0_5) + (\BRACEVSPACE, - \BRACEHSPACE) $) node[xshift = 1.3*\BRACEXSIFT, yshift = 0mm, pos = 0.5] {\INDEX_SIZE$k_{r,1} - 1$};
\tdot (d1) at ($(E2_0_5)!.5!(E2_0_6) + (1.8*\DOTSBRANK,  \DOTSPOSITION)$) {};
\tdot (d3) at ($(E3_0_5)!.5!(E3_0_6) + (- 1.8*\DOTSBRANK, \DOTSPOSITION)$){};
\tdot (d2) at ($(d1)!.5!(d3)$){};
}\end{tikzpicture}
\right).
\end{align*}
Therefore, GMT-type MZVs can be expressed as a sum of a finite number of EZ-type MZVs by \cite[Corollary 2.4]{Yamamoto}.
\end{remark}
\begin{proposition}
If the condition {\rm (ii)} of Proposition \ref{pr:Uconv} is satisfied and all entries of $\mathbf{s}_{i}\,(2 \le i \le r)$ are positive integers {\rm (}we put $\mathbf{s}_{i}=\mathbf{k}_{i} (= (k_{i,1}, \dots, k_{i, n_{i}})) \in \mathbb{N}^{n_{i}}\,(2 \le i \le r)${\rm )}, then $\zeta_{MT}(0,\mathbf{k}_{2},\dots,\mathbf{k}_{r};\mathbf{s}_{r+1})$ is expressed as a $\mathbb{Q}$-linear combination of EZ-type MZFs.
\end{proposition}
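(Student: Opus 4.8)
The plan is to work directly from the series (\ref{eq:defU}) in the region where condition (ii) of Proposition \ref{pr:Uconv} guarantees absolute convergence, so that every rearrangement below is legitimate; the resulting identity of meromorphic functions then extends everywhere by analytic continuation. Write $n=n_{r+1}$ and $\sigma_u=s_{r+1,u}$. First I would reorganise the summation. Because $\mathbf{s}_1=(0)$, the first block reduces to a single free index $m_{1,1}=:q_1\ge 1$, and together with the increments $m_{r+1,1},\dots,m_{r+1,n-1}$ of the last block the assignment $q_u=q_1+\sum_{w=1}^{u-1}m_{r+1,w}$ is a bijection onto the tuples $0<q_1<\dots<q_n$. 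Setting $M=\sum_{i=2}^{r}m_{i,n_i}$, each factor $(\sum_{v=1}^{r}m_{v,n_v}+\sum_{w=1}^{u-1}m_{r+1,w})$ of the last block equals $M+q_u$, so after the substitution $l_u=M+q_u$ one obtains
\[\zeta_{MT}(0,\mathbf{k}_2,\dots,\mathbf{k}_r;\mathbf{s}_{r+1})=\sum_{M\ge 1}c_M\sum_{M<l_1<\dots<l_n}\frac{1}{l_1^{\sigma_1}\cdots l_n^{\sigma_n}},\]
where $c_M=\sum\prod_{i=2}^{r}\prod_{j=1}^{n_i}m_{i,j}^{-k_{i,j}}$, the inner sum running over all $0<m_{i,1}<\dots<m_{i,n_i}$ $(2\le i\le r)$ with $\sum_{i=2}^{r}m_{i,n_i}=M$. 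Equivalently, $c_M$ is the coefficient of $w^M$ in $\prod_{i=2}^{r}{\rm Li}_{\mathbf{k}_i}(w)$.

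The decisive step is to reduce this product of multi-polylogarithms to single ones. Each ${\rm Li}_{\mathbf{k}_i}(w)$ has an iterated-integral representation along $[0,w]$ in the one-forms $dt/t$ and $dt/(1-t)$, which is the integrated form of Lemma \ref{le:Lidi}(i); hence by the shuffle product their product is a $\mathbb{Z}$-linear combination $\sum_{\mathbf{c}}d_{\mathbf{c}}\,{\rm Li}_{\mathbf{c}}(w)$ of iterated integrals over the same path, that is, of single multi-polylogarithms. Since the word representing each factor begins with the letter $dt/(1-t)$, so does every word occurring in the shuffle, and therefore every index $\mathbf{c}$ that appears has all entries positive integers, while the coefficients $d_{\mathbf{c}}$ are integers and only finitely many occur. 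The same reduction may be carried out by iterated partial fractions on the largest summation indices, which is the classical passage from Mordell--Tornheim to Euler--Zagier data. I expect this middle step to be the main obstacle: making the decomposition precise, verifying that only positive-integer indices $\mathbf{c}$ survive, and controlling the coefficients $d_{\mathbf{c}}$.

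Finally I would substitute this expansion. The exponent of $w$ in ${\rm Li}_{\mathbf{c}}(w)$ is its largest summation index, so writing $p=d(\mathbf{c})$ and
\[c_M=\sum_{\mathbf{c}}d_{\mathbf{c}}\sum_{0<n_1<\dots<n_{p-1}<M}\frac{1}{n_1^{c_1}\cdots n_{p-1}^{c_{p-1}}M^{c_p}},\]
and then concatenating the chain $n_1<\dots<n_{p-1}<M$ with $M<l_1<\dots<l_n$, yields
\[\zeta_{MT}(0,\mathbf{k}_2,\dots,\mathbf{k}_r;\mathbf{s}_{r+1})=\sum_{\mathbf{c}}d_{\mathbf{c}}\,\zeta(\mathbf{c},s_{r+1,1},\dots,s_{r+1,n_{r+1}}),\]
a finite $\mathbb{Q}$-linear combination of EZ-type MZFs. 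Each $\zeta(\mathbf{c},\mathbf{s}_{r+1})$ converges absolutely in the region of condition (ii): the inequalities governing its last $n$ slots are exactly those imposed there on $\mathbf{s}_{r+1}$, and the prepended integer entries $c_j\ge 1$ only reinforce the remaining ones. The two regroupings are routine once absolute convergence under condition (ii) is invoked, so the whole argument hinges on the reduction in the preceding paragraph.
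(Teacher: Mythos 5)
Your argument is correct, and it rests on the same decisive ingredient as the paper's proof, namely the shuffle-product expansion of $\prod_{i=2}^{r}{\rm Li}_{\mathbf{k}_{i}}$ into a finite $\mathbb{Z}$-linear combination of single multi-polylogarithms ${\rm Li}_{\mathbf{c}}$ with $\mathbf{c}\in\mathbb{N}^{p}$ (your observation that the letter adjacent to the basepoint is $dt/(1-t)$ for each factor, hence for every shuffle, is exactly what guarantees admissibility of the resulting indices). The difference is in the surrounding bookkeeping. The paper first converts the $\mathbf{s}_{r+1}$-part of the series into a Mellin-type integral via $m^{-s}=\frac{1}{\Gamma(s)}\int_{0}^{\infty}t^{s-1}e^{-mt}\,dt$, obtaining the representation (\ref{int}) in which the product $\prod_{i=2}^{r}{\rm Li}_{\mathbf{k}_{i}}(e^{-(t_{1}+\dots+t_{n_{r+1}})})$ appears explicitly; it then shuffles, recognizes each resulting integral as $\zeta_{MT}(0,\mathbf{k};\mathbf{s}_{r+1})$ by reading (\ref{int}) backwards with $r=2$, and finally invokes the structural identity $\zeta_{MT}(0,\mathbf{k};\mathbf{s}_{r+1})=\zeta(\mathbf{k},\mathbf{s}_{r+1})$ from the definition of the GMT-type MZF. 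You stay entirely at the level of the series: you isolate the generating-function coefficient $c_{M}=[w^{M}]\prod_{i=2}^{r}{\rm Li}_{\mathbf{k}_{i}}(w)$, shuffle there, and then concatenate the index chains $0<n_{1}<\dots<n_{p-1}<M<l_{1}<\dots<l_{n_{r+1}}$ directly, which amounts to reproving that same structural identity by hand. This buys you a proof that never needs to justify the interchange of an infinite sum with a multiple integral (only rearrangement of an absolutely convergent series), and your closing verification that each $\zeta(\mathbf{c},\mathbf{s}_{r+1})$ converges under condition (ii) is a point the paper leaves implicit; the paper's route, in exchange, recycles machinery (Lemma \ref{le:Aintrep} and the representation (\ref{eq:Uintrep})) that it needs elsewhere anyway. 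The step you flag as the main obstacle is standard and is treated at the same level of detail (a citation of the shuffle product) in the paper itself, so there is no gap.
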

\begin{proof}
We have
\begin{align}
&\zeta_{MT}(0,\mathbf{k}_{2},\dots,\mathbf{k}_{r};\mathbf{s}_{r+1}) \nonumber \\
&=\sum_{\substack{0<m_{1,1} \\ 0<m_{2,1}<m_{2,2}<\dots<m_{2,n_{2}} \\ \vdots \\ 0<m_{r,1}<m_{r,2}<\dots<m_{r,n_{r}}}} \, \sum_{m_{r+1,1}=1,\dots ,m_{r+1,n_{r+1}-1}=1}^{\infty} \nonumber \\
&\quad\frac{1}{\prod_{i=2}^{r} \prod_{j=1}^{n_{i}} m_{i,j}^{k_{i,j}} \prod_{u=1}^{n_{r+1}} (\sum_{v=1}^{r} m_{v,n_{v}} +\sum_{w=1}^{u-1} m_{r+1,w})^{s_{r+1,u}}}  \nonumber\\
&=\sum_{\substack{0<m_{1,1} \\ 0<m_{2,1}<m_{2,2}<\dots<m_{2,n_{2}} \\ \vdots \\ 0<m_{r,1}<m_{r,2}<\dots<m_{r,n_{r}}}} \, \sum_{m_{r+1,1}=1,\dots ,m_{r+1,n_{r+1}-1}=1}^{\infty} \frac{1}{\prod_{i=2}^{r} \prod_{j=1}^{n_{i}}m_{i,j}^{k_{i,j}}}  \nonumber \\
&\quad\times\left(\prod_{u=1}^{n_{r+1}} \frac{1}{\Gamma(s_{r+1,u})} \int_{0}^{\infty} t_{u}^{s_{r+1,u} - 1} e^{-(\sum_{v=1}^{r} m_{v,n_{v}} +\sum_{w=1}^{u-1} m_{r+1,w})t_{u}}\,dt_{u}\right) \nonumber \\
&=\frac{1}{\prod_{u=1}^{n_{r+1}}\Gamma(s_{r+1,u})} \sum_{\substack{0<m_{2,1}<m_{2,2}<\dots<m_{2,n_{2}} \\ \vdots \\ 0<m_{r,1}<m_{r,2}<\dots<m_{r,n_{r}}}} \frac{1}{\prod_{i=2}^{r} \prod_{j=1}^{n_{i}}m_{i,j}^{k_{i,j}}}  \nonumber \\
&\quad\times\int_{0}^{\infty} \cdots \int_{0}^{\infty} t_{1}^{s_{r+1,1}-1} \cdots t_{n_{r+1}}^{s_{r+1,n_{r+1}}-1} e^{-(\sum_{v=2}^{r} m_{v,n_{v}})(t_{1}+\dots +t_{n_{r+1}})}  \nonumber \\
&\quad\times \frac{1}{e^{t_{1}+\dots+t_{n_{r+1}}} -1} \frac{1}{e^{t_{2}+\dots+t_{n_{r+1}}} -1} \cdots \frac{1}{e^{t_{n_{r+1}}} -1}\,dt_{1} \cdots dt_{n_{r+1}} \nonumber \\
&=\frac{1}{\prod_{u=1}^{n_{r+1}} \Gamma(s_{r+1,u})} \int_{0}^{\infty} \cdots \int_{0}^{\infty} t_{1}^{s_{r+1,1}-1} \cdots t_{n_{r+1}}^{s_{r+1,n_{r+1}}-1}   \label{int}  \\
&\quad\times \frac{\prod_{i=2}^{r} {\rm Li}_{\mathbf{k}_{i}}(e^{-(t_{1}+\dots+t_{n_{r+1}})})}{e^{t_{1}+\dots+t_{n_{r+1}}} -1} \frac{1}{e^{t_{2}+\dots+t_{n_{r+1}}} -1} \cdots \frac{1}{e^{t_{n_{r+1}}} -1}\,dt_{1} \cdots dt_{n_{r+1}}. \nonumber
\end{align}
By using the shuffle product formula for $\prod_{i=2}^{r} {\rm Li}_{\mathbf{k}_{i}}(e^{-(t_{1}+\dots+t_{n_{r+1}})})$, we find that (\ref{int}) is expressed as a $\mathbb{Q}$-linear combination of the form
\begin{align}
&\frac{1}{\prod_{u=1}^{n_{r+1}} \Gamma(s_{r+1,u})} \int_{0}^{\infty} \cdots \int_{0}^{\infty} t_{1}^{s_{r+1,1}-1} \cdots t_{n_{r+1}}^{s_{r+1, n_{r+1}}-1} \label{eq:QlinearLi}\\
&\times \frac{{\rm Li}_{\mathbf{k}}(e^{-(t_{1}+\dots+t_{n_{r+1}})}) }{e^{t_{1}+\dots+t_{n_{r+1}}} -1} \frac{1}{e^{t_{2}+\dots+t_{n_{r+1}}} -1} \cdots \frac{1}{e^{t_{n_{r+1}}} -1}\,dt_{1} \cdots dt_{n_{r+1}}. \nonumber
\end{align}
By applying (\ref{int}) with $r=2$ to the function (\ref{eq:QlinearLi}), we see that the function (\ref{eq:QlinearLi}) equals to $\zeta_{MT} (0, \mathbf{k}; \mathbf{s}_{r+1})$.
Moreover, by the definition of the function (\ref{eq:defU}), we find that 
\[\zeta_{MT} (0, \mathbf{k}; \mathbf{s}_{r+1}) = \zeta(\mathbf{k}, \mathbf{s}_{r+1}).\]
Therefore, (\ref{int}) is expressed as a $\mathbb{Q}$-linear combination of EZ-type MZFs.
\end{proof}

\section{Generalizing Theorem \ref{th:IZfrel} and Theorem \ref{th:AKZofMfre}}\label{se:Generalize}
In this section, using GMT-type MZFs, we generalize Theorem \ref{th:IZfrel} and Theorem \ref{th:AKZofMfre}.
\subsection{Ito zeta function}
It is difficult to generalize Theorem \ref{th:IZfrel} and Theorem \ref{th:AKZofMfre} as their original forms. First, in order to make the idea easier to understand, we rewrite Theorem \ref{th:IZfrel}.
\begin{proposition}\label{pr:Ito2funrelUtype}
For $r \in \mathbb{N},s \in \mathbb{C}$, we have
\begin{align*}
&\xi_{MT}(\{2\}^{r};s)\\
&=\sum_{a_{1}+a_{2}+a_{3}=r} \frac{r!}{a_{1}! a_{3}!} \zeta(2)^{a_{1}} (-1)^{a_{2} + a_{3}} \binom{s+a_{2}-1}{a_2}\zeta_{MT}(0,\{1\}^{a_{2}}, \{2\}^{a_{3}}; a_{2}+s),
\end{align*}
where the sum is over all $a_{1},a_{2},a_{3}\in \mathbb{Z}_{\ge 0}$ satisfying $a_{1}+a_{2}+a_{3}=r$.
\end{proposition}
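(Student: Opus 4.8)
The plan is to deduce this proposition from Theorem \ref{th:IZfrel} by a binomial inversion rather than by a fresh integral computation. First I would put Theorem \ref{th:IZfrel} into a cleaner shape: dividing both sides by $\Gamma(s)$, replacing $r-1$ by $N$, and using $\Gamma(s+j)/\Gamma(s) = j!\binom{s+j-1}{j}$, the theorem asserts that for every $N \ge 0$,
\[
\sum_{j=0}^{N}\binom{N}{j}(-1)^{j}\zeta(2)^{N-j}\,\xi_{MT}(\{2\}^{j};s) = G(N),
\]
where $G(N) := \sum_{j=0}^{N}\binom{N}{j}\,j!\,\binom{s+j-1}{j}\,\zeta_{MT}(0,\{2\}^{N-j},\{1\}^{j};j+s)$. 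The $j=0$ term uses $\xi_{MT}(\emptyset;s)=\zeta(s)$. The left-hand side is the binomial transform of the sequence $f(j) := \xi_{MT}(\{2\}^{j};s)$ twisted by the geometric weight $\zeta(2)^{N-j}$, and the point is that these relations hold for \emph{all} $N$, so the transform can be inverted.

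Second, I would invert this transform. Passing to the exponential generating functions $F(x)=\sum_{j}f(j)x^{j}/j!$ and $G(x)=\sum_{N}G(N)x^{N}/N!$, the displayed relation reads $G(x)=e^{\zeta(2)x}F(-x)$, whence $F(x)=e^{\zeta(2)x}G(-x)$, giving the finite inversion formula
\[
\xi_{MT}(\{2\}^{r};s) = f(r) = \sum_{N=0}^{r}\binom{r}{N}\zeta(2)^{r-N}(-1)^{N}G(N).
\]

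Third, I would substitute the definition of $G(N)$ and collect terms by setting $a_{1}=r-N$, $a_{2}=j$, $a_{3}=N-j$, so that $a_{1}+a_{2}+a_{3}=r$. The coefficient collapses via $\binom{r}{N}\binom{N}{j}\,j! = r!/(a_{1}!\,a_{3}!)$, the sign becomes $(-1)^{a_{2}+a_{3}}$, and the $\Gamma$-ratio becomes $\binom{s+a_{2}-1}{a_{2}}$, producing
\[
\xi_{MT}(\{2\}^{r};s) = \sum_{a_{1}+a_{2}+a_{3}=r}\frac{r!}{a_{1}!\,a_{3}!}\zeta(2)^{a_{1}}(-1)^{a_{2}+a_{3}}\binom{s+a_{2}-1}{a_{2}}\zeta_{MT}(0,\{2\}^{a_{3}},\{1\}^{a_{2}};a_{2}+s).
\]
Finally, since $\zeta_{MT}(0,\{2\}^{a_{3}},\{1\}^{a_{2}};a_{2}+s)$ is an ordinary MT-type MZF (every block has depth one and $n_{r+1}=1$) and MT-type values are invariant under permuting the non-distinguished variables, I may reorder the blocks to read $\zeta_{MT}(0,\{1\}^{a_{2}},\{2\}^{a_{3}};a_{2}+s)$, which is exactly the asserted form.

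The genuinely delicate points are the coefficient bookkeeping in the inversion, which I would verify on the $r=1$ case as a check, and the legitimacy of reordering the $\{1\}$ and $\{2\}$ blocks; the latter rests entirely on the symmetry of the Mordell-Tornheim sum in its summation variables and so requires no further analytic input. Everything else is formal manipulation of identities already established for all parameters in Theorem \ref{th:IZfrel}.
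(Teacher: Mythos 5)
Your argument is correct, but it is a genuinely different route from the one in the paper. The paper proves the proposition directly: it writes ${\rm Li}_{2}(1-e^{-t}) = \zeta(2) - \int_{0}^{\infty}\frac{t}{e^{t+u}-1}\,du - \int_{0}^{\infty}\frac{u}{e^{t+u}-1}\,du$, raises this to the $r$-th power inside $\int_0^\infty \frac{t^{s-1}}{e^t-1}(\cdots)^r\,dt$, expands by the multinomial theorem, and recognizes each resulting iterated integral as $\Gamma(a_2+s)\,\zeta_{MT}(0,\{1\}^{a_2},\{2\}^{a_3};a_2+s)$ via the integral representation of the GMT-type MZF. You instead take Theorem \ref{th:IZfrel} as input and invert the weighted binomial transform; your bookkeeping checks out ($\binom{r}{N}\binom{N}{j}j! = r!/(a_1!\,a_3!)$, $\Gamma(s+j)/\Gamma(s)=j!\binom{s+j-1}{j}$, the $r=1$ case agrees), the empty-index convention $\xi_{MT}(\emptyset;s)=\zeta(s)=\zeta_{MT}(0;s)$ is the right one, and the final reordering of the $\{1\}$ and $\{2\}$ blocks is legitimate because the Mordell--Tornheim sum is symmetric in its non-distinguished variables on the domain of convergence, hence everywhere by uniqueness of analytic continuation. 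What you gain is a clean demonstration that the proposition is formally equivalent to (one direction of) Ito's theorem, with no new analysis; what you lose is the mechanism the paper is actually setting up: the three-term decomposition of ${\rm Li}_2(1-e^{-t})$ used in the direct proof is precisely what Lemma \ref{le:Liint} generalizes to ${\rm Li}_k$ for arbitrary $k\ge 2$, and that generalization (which has no analogue of Theorem \ref{th:IZfrel} available to invert) is what powers Theorem \ref{th:ItoGfre}. So your proof establishes the stated proposition but would not extend to the general-index case, which is the reason the paper presents the direct computation.
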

\begin{proof}
By using the first formula of Corollary \ref{co:liint}, we have
\[{\rm Li}_{2}(1-e^{-t}) = \zeta(2) - \int_{0}^{\infty}\frac{t}{e^{t+u}-1}\,du-\int_{0}^{\infty}\frac{u}{e^{t+u}-1}\,du.\]
Therefore, for $\Re(s) > 1$, we have
\begin{align*}
&\Gamma(s)\xi_{MT}(\{2\}^{r};s)\\
&=\int_{0}^{\infty} \frac{t^{s-1}}{e^{t} - 1} \Bigl({\rm Li}_{2}(1-e^{-t})\Bigr)^{r}\,dt \\
&=\int_{0}^{\infty} \frac{t^{s - 1}}{e^{t} - 1} \Bigl(\zeta(2) - \int_{0}^{\infty}\frac{t}{e^{t+u}-1}\,du - \int_{0}^{\infty}\frac{u}{e^{t+u}-1}\,du\Bigr)^{r} \,dt \\
&=\sum_{a_{1}+a_{2}+a_{3}=r} \frac{r!}{a_{1}! a_{2}! a_{3}!}\\
&\quad\times \int_{0}^{\infty} \frac{t^{s - 1}}{e^{t} - 1} \zeta(2)^{a_{1}} \Bigl(-\int_{0}^{\infty}\frac{t}{e^{t+u}-1}\,du\Bigr)^{a_{2}} \Bigl(-\int_{0}^{\infty}\frac{u}{e^{t+u}-1}\,du\Bigr)^{a_{3}} \,dt \\ 
&=\sum_{a_{1}+a_{2}+a_{3}=r} \frac{r!}{a_{1}! a_{2}! a_{3}!} \zeta(2)^{a_{1}} (-1)^{a_{2} + a_{3}} \\
&\quad\times\int_{0}^{\infty} \frac{t^{s+a_{2} - 1}}{e^{t} - 1} \Bigl(\int_{0}^{\infty}\frac{1}{e^{t+u}-1}\,du\Bigr)^{a_{2}} \Bigl(\int_{0}^{\infty}\frac{u}{e^{t+u}-1}\,du\Bigr)^{a_{3}} \,dt \\ 
&=\sum_{a_{1}+a_{2}+a_{3}=r} \frac{r!}{a_{1}! a_{2}! a_{3}!} \zeta(2)^{a_{1}} (-1)^{a_{2} + a_{3}} \Gamma(a_{2}+s) \zeta_{MT}(0,\{1\}^{a_{2}}, \{2\}^{a_{3}}; a_{2}+s).
\end{align*}
By the analytic continuation of the Ito zeta function (\cite[Theorem 2]{Ito}) and the MT-type MZF, we obtain the stated theorem.
\end{proof}
The key of this proof is to use the formula
\[{\rm Li}_{2}(1-e^{-t}) = \zeta(2) - \int_{0}^{\infty}\frac{t}{e^{t+u}-1}\,du-\int_{0}^{\infty}\frac{u}{e^{t+u}-1}\,du\]
directly. We generalize this formula to any $k$.
\begin{lemma}\label{le:Liint}
For $k \in \mathbb{Z}_{\ge 2}$ and $t>0$, we have
\[{\rm Li}_{k}(1-e^{-t})=\sum_{j=0}^{2k-2} f(t; j, k),\]
where
\begin{align*}
  &f(t; j, k)  \\
&= \begin{cases}
    (-1)^{j} \zeta(k-j) \int_{0}^{\infty} \cdots \int_{0}^{\infty} \prod_{i=1}^{j}\frac{du_{i}}{e^{t+u_{1}+\dots +u_{i}} - 1}\hspace{\fill}({\rm if}\  j < k-1),\\
   (-1)^{k-1} \int_{0}^{\infty}  \cdots \int_{0}^{\infty} t \prod_{i=1}^{k-1}\frac{du_{i}}{e^{t+u_{1}+\dots +u_{i}}-1} \hspace{\fill}({\rm if}\  j = k-1), \\
  (-1)^{k-1}  \int_{0}^{\infty}  \cdots  \int_{0}^{\infty} u_{j-k+1}\prod_{i=1}^{k-1}\frac{du_{i}}{e^{t+u_{1}+\dots +u_{i}}-1} \hspace{\fill}({\rm if}\  j > k-1 ),
\end{cases}
\end{align*}
and
we understand if $\, j=0$ then
\begin{gather*}
\int_{0}^{\infty} \frac{1}{e^{t+u_{1}} - 1} \cdots \frac{1}{e^{t+u_{1}+\dots +u_{j}} - 1} \, du_{1} \cdots du_{j} =1.
\end{gather*}
\end{lemma}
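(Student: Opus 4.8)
The plan is to prove the identity by induction on $k$, using the single-index recursion that falls out of Corollary~\ref{co:liint}. Taking $\mathbf{k}=(k)$ in the first formula of Corollary~\ref{co:liint} gives
\[{\rm Li}_{k+1}(1-e^{-t}) = \zeta(k+1) - \int_{0}^{\infty} \frac{{\rm Li}_{k}(1-e^{-(t+u)})}{e^{t+u}-1}\,du,\]
and since ${\rm Li}_{1}(1-e^{-t})=t$, the case $k=1$ already matches the right-hand side of the lemma (read with the stated empty-product convention), which I would take as the base case. It is convenient to abbreviate the building blocks: write $I_{j}(t)=\int_{0}^{\infty}\cdots\int_{0}^{\infty}\prod_{i=1}^{j}\frac{du_{i}}{e^{t+u_{1}+\dots+u_{i}}-1}$ (so $I_{0}=1$) and introduce the operator $T[g](t)=\int_{0}^{\infty}\frac{g(t+u)}{e^{t+u}-1}\,du$, so that the recursion reads ${\rm Li}_{k+1}(1-e^{-t})=\zeta(k+1)-T[{\rm Li}_{k}(1-e^{-(\cdot)})](t)$.

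The computational heart is a single substitution: relabelling the outer variable $u$ as $w_{1}$ and the inner variables $u_{i}$ as $w_{i+1}$ turns $T$ into ``prepend one more integration level.'' Concretely this yields $T[I_{j}]=I_{j+1}$, and more generally it sends an integrand carrying a weight $u_{m}$ (over $n$ levels) to the same integrand carrying the weight $u_{m+1}$ (over $n+1$ levels). All the integrals converge absolutely for fixed $t>0$ because each factor $\frac{1}{e^{t+\cdots}-1}$ decays exponentially and dominates the polynomial weights $t$ and $u_{m}$; since the integrands are non-negative, Tonelli's theorem justifies every interchange of integration and every change of variables.

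With these tools the inductive step becomes a matter of applying $-T$ term by term to the three types of summands $f(t+u;j,k)$ and checking that the images reassemble into $\sum_{j=0}^{2k}f(t;j,k+1)$. The type-1 terms $(-1)^{j}\zeta(k-j)I_{j}$ $(j<k-1)$ map to $(-1)^{j+1}\zeta(k-j)I_{j+1}$, which are exactly the type-1 terms of weight $k+1$ for $1\le j'\le k-1$, while the new $j'=0$ term $\zeta(k+1)$ is supplied by the explicit constant in the recursion. The only term that branches is the type-2 term $(-1)^{k-1}t\,I_{k-1}$: writing $t+u=t+u$ inside $T$ splits it into $(-1)^{k-1}t\,T[I_{k-1}]=(-1)^{k-1}t\,I_{k}$, which becomes the new type-2 term $(-1)^{k}t\,I_{k}$, plus a piece carrying an extra factor $u$ that becomes the first ($m=1$) type-3 term of weight $k+1$. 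The remaining type-3 terms, carrying weight $u_{m}$, simply shift to weight $u_{m+1}$ and fill out the type-3 terms for $m=2,\dots,k$.

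I expect the main obstacle to be organisational rather than analytic: one must keep the three case distinctions, the alternating signs, and the two index ranges ($j$ for weight $k$ versus $j'$ for weight $k+1$) aligned so that each summand of ${\rm Li}_{k+1}(1-e^{-t})$ is produced exactly once. The point that requires the most care is the ``$t+u=t+u$'' splitting of the type-2 term, since this is the mechanism by which the single boundary weight $t$ propagates into the new $u_{1}$-weighted integral; getting the bookkeeping of this branching right is the crux of the argument.
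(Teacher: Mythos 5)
Your proposal is correct and follows essentially the same route as the paper: induction on $k$ via the recursion ${\rm Li}_{k+1}(1-e^{-t})=\zeta(k+1)-\int_{0}^{\infty}\frac{{\rm Li}_{k}(1-e^{-(t+u_{1})})}{e^{t+u_{1}}-1}\,du_{1}$ from Corollary \ref{co:liint}, with the inductive hypothesis applied term by term so that the type-1 terms shift to $f(t;j+1,k+1)$, the type-2 term splits as $t+u_{1}$ into $f(t;k,k+1)+f(t;k+1,k+1)$, and the type-3 terms shift to $f(t;j+2,k+1)$. Starting the induction at $k=1$ rather than the paper's explicit $k=2$ base case, and packaging the step as an operator $T$, are only cosmetic differences.
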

\begin{proof}
We use induction. If $k=2$, it is true since 
\[{\rm Li}_{2}(1-e^{-t})=\zeta(2) - \int_{0}^{\infty}\frac{t}{e^{t+u}-1}\,du - \int_{0}^{\infty}\frac{u}{e^{t+u}-1}\,du.\]
Assume that the formula holds for $k$, and prove it for $k+1$.
\begin{align*}
&{\rm Li}_{k+1}(1-e^{-t}) \\
&=\zeta(k+1) - \int_{0}^{\infty}\frac{{\rm Li}_{k}(1-e^{-(t+u_{1})})}{e^{t+u_{1}} -1} \,du_{1} \\
&=\zeta(k+1) - \int_{0}^{\infty} \frac{1}{e^{t+u_{1}} -1} \Bigl(\sum_{j=0}^{2k-2} f(t+u_{1}; j, k)\Bigr) du_{1} \\
&=\zeta(k+1) + \sum_{j=0}^{k-2} f(t; j+1, k+1) + f(t; k, k+1)+ f(t; k+1, k+1) \\
&\quad+\sum_{j=k}^{2k-2} f(t; j+2, k+1)\\
&=\sum_{j=0}^{2k} f(t; j, k+1).
\end{align*}
This completes the proof.
\end{proof}
\begin{lemma}\label{le:Liprod}
With the assumption of Lemma \ref{le:Liint}, for $k_{i} \ge 2\, (1 \le i \le r)$ and $t>0$, we have
\[\prod_{i=1}^{r} {\rm Li}_{k_{i}}(1-e^{-t}) = \sum_{\substack{0 \le j_{1} \le 2k_{1}-2 \\ \vdots \\ 0 \le j_{r} \le 2k_{r}-2 }} \prod_{i=1}^{r} f(t; j_{i}, k_{i}).\]
\end{lemma}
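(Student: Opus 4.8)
The plan is to reduce everything to a single application of Lemma \ref{le:Liint} followed by elementary distributivity. Since each $k_i \ge 2$, Lemma \ref{le:Liint} applies to each factor separately and expresses it, for fixed $t > 0$, as the \emph{finite} sum
\[
{\rm Li}_{k_i}(1 - e^{-t}) = \sum_{j_i = 0}^{2k_i - 2} f(t; j_i, k_i) \qquad (1 \le i \le r).
\]
These are exact numerical identities, so no analytic subtlety enters at this stage.

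First I would substitute each of these $r$ identities into the product $\prod_{i=1}^{r} {\rm Li}_{k_i}(1 - e^{-t})$. Next, because each factor is now a finite sum indexed by $j_i$ ranging over $\{0, 1, \dots, 2k_i - 2\}$, I would expand the product of the $r$ finite sums by the distributive law. Expanding a product of finite sums produces precisely the sum, over all choices of one summand from each factor (that is, over all tuples $(j_1, \dots, j_r)$ with $0 \le j_i \le 2k_i - 2$), of the corresponding product $\prod_{i=1}^{r} f(t; j_i, k_i)$. This gives
\[
\prod_{i=1}^{r} {\rm Li}_{k_i}(1 - e^{-t}) = \sum_{\substack{0 \le j_1 \le 2k_1 - 2 \\ \vdots \\ 0 \le j_r \le 2k_r - 2}} \prod_{i=1}^{r} f(t; j_i, k_i),
\]
which is the asserted identity.

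There is essentially no obstacle here: the statement is a purely algebraic consequence of Lemma \ref{le:Liint}, and the only point deserving any attention is that every sum in sight is finite, so the interchange of product and summation is nothing more than ordinary distributivity and needs no further justification. If one wishes to be completely formal, the expansion can be framed as a short induction on $r$, with base case $r = 1$ being Lemma \ref{le:Liint} itself and the inductive step obtained by multiplying the already-expanded product over the first $r-1$ factors by the single-factor identity for the $r$-th term; but the direct distributive argument is entirely sufficient.
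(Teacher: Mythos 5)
Your proof is correct and is essentially identical to the paper's: the paper likewise applies Lemma \ref{le:Liint} to each factor and then expands the product of the resulting finite sums by distributivity in a single display. No further commentary is needed.
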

\begin{proof}
By Lemma \ref{le:Liint}, we have
\[\prod_{i=1}^{r} {\rm Li}_{k_{i}}(1-e^{-t}) = \prod_{i=1}^{r} \sum_{j=0}^{2k_{i}-2} f(t; j, k_{i}) =\sum_{\substack{0 \le j_{1} \le 2k_{1}-2 \\ \vdots \\ 0 \le j_{r} \le 2k_{r}-2 }} \prod_{i=1}^{r} f(t; j_{i}, k_{i}).\]
\end{proof}
\begin{theorem}\label{th:ItoGfre}
For $l \in \mathbb{Z}_{\ge 0}$,\ $\,s \in \mathbb{C}$ and $\,k_{i} \ge 2\ (1 \le i \le r)$, we have
\begin{align*}
&\xi_{MT}(\{1\}^{l},k_{1},\dots ,k_{r};s) \\
&=\sum_{\substack{0 \le j_{1} \le 2k_{1}-2 \\ \vdots \\ 0 \le j_{r} \le 2k_{r}-2 }} a_{r}(\mathbf{j},\mathbf{k}) \frac{\Gamma(l+b_{r}(\mathbf{j},\mathbf{k})+s)}{\Gamma(s)}\\
&\quad\times\zeta_{MT}(0,\mathbf{k}(j_{1},k_{1}), \dots,\mathbf{k}(j_{r},k_{r});l+b_{r}(\mathbf{j},\mathbf{k})+s),
\end{align*}
where 
\[a_{r}(\mathbf{j},\mathbf{k}) = \prod_{i=1}^{r} a(j_{i},k_{i}),
\ \ \ \ \ a(j_{i}, k_{i}) = \begin{cases}
(-1)^{j_{i}}\zeta(k_{i}-j_{i}) & (j_{i} < k_{i}-1), \\
(-1)^{k_{i}-1} & (j_{i} \ge k_{i}-1),
\end{cases}\]
\[b_{r}(\mathbf{j},\mathbf{k}) = |\{ i \in \{1,\dots,r\} \mid j_{i} = k_{i}-1\}|,\]
and
\[\mathbf{k}(j_{i}, k_{i}) =  \begin{cases}
(\{1\}^{j_{i}}) & (j_{i} \le k_{i}-1), \\
(\underbrace{\{1\}^{j_{i}-k_{i}},2,\{1\}^{2k_{i}-2-j_{i}}}_{k_{i}-1}) & (j_{i} > k_{i}-1).
\end{cases}\]
\end{theorem}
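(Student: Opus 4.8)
The plan is to reduce everything to the integral representation~(\ref{eq:Uintrep}) of the GMT-type zeta function, following the template of the proofs of Theorem~\ref{th:AKZofMfre} and Proposition~\ref{pr:Ito2funrelUtype}. Since $\mathrm{Li}_{1}(1-e^{-t})=t$, for $\Re(s)$ large the definition of the Ito zeta function gives
\[
\xi_{MT}(\{1\}^{l},k_{1},\dots,k_{r};s)=\frac{1}{\Gamma(s)}\int_{0}^{\infty}t^{s+l-1}\frac{\prod_{i=1}^{r}\mathrm{Li}_{k_{i}}(1-e^{-t})}{e^{t}-1}\,dt.
\]
First I would apply Lemma~\ref{le:Liprod} to expand $\prod_{i=1}^{r}\mathrm{Li}_{k_{i}}(1-e^{-t})=\sum_{\mathbf{j}}\prod_{i=1}^{r}f(t;j_{i},k_{i})$ and interchange the (finite) sum with the integral, so that the left-hand side becomes a sum over $\mathbf{j}=(j_{1},\dots,j_{r})$, $0\le j_{i}\le 2k_{i}-2$, of single terms
\[
\frac{1}{\Gamma(s)}\int_{0}^{\infty}\!\!\cdots\int_{0}^{\infty}t^{s+l-1}\frac{\prod_{i=1}^{r}f(t;j_{i},k_{i})}{e^{t}-1}\,dt\,d\mathbf{u},
\]
where each $f(t;j_{i},k_{i})$ carries its own block of integration variables $u^{(i)}_{1},\dots$ with denominators $e^{t+u^{(i)}_{1}+\dots+u^{(i)}_{p}}-1$, as in Lemma~\ref{le:Liint}.

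For a fixed $\mathbf{j}$ I would read off, from the case distinction in Lemma~\ref{le:Liint}, the three pieces of data attached to $f(t;j_{i},k_{i})$: the scalar in front, which is exactly $a(j_{i},k_{i})$; a possible numerator, namely $t$ when $j_{i}=k_{i}-1$ and $u^{(i)}_{j_{i}-k_{i}+1}$ when $j_{i}>k_{i}-1$ (and none when $j_{i}<k_{i}-1$); and the block of $u^{(i)}$-integrations. Pulling out the scalars produces $a_{r}(\mathbf{j},\mathbf{k})=\prod_{i}a(j_{i},k_{i})$. Each index $i$ with $j_{i}=k_{i}-1$ contributes one numerator factor $t$, and there are $b_{r}(\mathbf{j},\mathbf{k})$ of them, so together with $t^{s+l-1}$ these give $t^{s+l+b_{r}(\mathbf{j},\mathbf{k})-1}$.

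The remaining multiple integral is then to be recognized through~(\ref{eq:Uintrep}). The variable $t$ plays the role of the single variable of the last block, so that $t^{s+l+b_{r}-1}/(e^{t}-1)$ matches the last-block factor with parameter $l+b_{r}+s$, while the variables $u^{(i)}_{1},\dots$ of the $i$-th factor become the variables of the $(i+1)$-st block. Because the denominators in $f$ are head-sums $t+u^{(i)}_{1}+\dots+u^{(i)}_{p}$ whereas those in~(\ref{eq:Uintrep}) are tail-sums, the correspondence reverses the order within each block; the blocks arising from $j_{i}\le k_{i}-1$ consist only of $1$'s (all numerator exponents being $0$), and a block arising from $j_{i}>k_{i}-1$ acquires a single entry $2$ from the numerator $u^{(i)}_{j_{i}-k_{i}+1}$. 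Tracking the location of this $2$ under the reversal, one checks that the $(i+1)$-st block is precisely $\mathbf{k}(j_{i},k_{i})$, the first block $(0)$ being what produces the outermost $1/(e^{t}-1)$ upon summation of its weight-$0$ variable. Since every middle block consists of $1$'s and at most one $2$, all the Gamma factors in~(\ref{eq:Uintrep}) equal $1$ except the last, $\Gamma(l+b_{r}+s)$; comparing this with the $1/\Gamma(s)$ in front yields the prefactor $\Gamma(l+b_{r}+s)/\Gamma(s)$, and each term becomes $a_{r}(\mathbf{j},\mathbf{k})\frac{\Gamma(l+b_{r}+s)}{\Gamma(s)}\zeta_{MT}(0,\mathbf{k}(j_{1},k_{1}),\dots,\mathbf{k}(j_{r},k_{r});l+b_{r}+s)$. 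Condition~(ii) of Proposition~\ref{pr:Uconv} guarantees that~(\ref{eq:Uintrep}) applies for $\Re(s)$ large.

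The main obstacle is exactly this combinatorial bookkeeping: verifying that the numerator $u^{(i)}_{j_{i}-k_{i}+1}$ lands, after the head-sum/tail-sum reversal, at the position prescribed by $\mathbf{k}(j_{i},k_{i})$, and that the collapse of the Gamma factors assembles cleanly into $\Gamma(l+b_{r}+s)/\Gamma(s)$; the small cases $k_{i}=2$ (where the reversal is invisible) and the known identity of Proposition~\ref{pr:Ito2funrelUtype} serve as useful checks. Once the identity is proved for $\Re(s)$ sufficiently large, I would finish by analytic continuation: the left-hand side continues meromorphically by \cite[Theorem 2]{Ito} and each GMT-type zeta function on the right by Remark~\ref{re:matrixnotinuas} (its hypotheses supplied by Proposition~\ref{pr:Uconv}), so the identity persists for all $s\in\mathbb{C}$.
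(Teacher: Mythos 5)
Your proposal follows the paper's proof essentially verbatim: expand $\prod_{i=1}^{r}{\rm Li}_{k_{i}}(1-e^{-t})$ by Lemma \ref{le:Liprod}, recognize each resulting multiple integral as a GMT-type zeta value via (\ref{eq:Uintrep}) (the factors of $t$ coming from the indices with $j_{i}=k_{i}-1$ shifting the exponent by $b_{r}(\mathbf{j},\mathbf{k})$), and finish by analytic continuation. The only caveat is that carrying out your head-sum/tail-sum reversal literally places the entry $2$ at position $2k_{i}-1-j_{i}$ rather than $j_{i}-k_{i}+1$, which matches the stated $\mathbf{k}(j_{i},k_{i})$ only after the harmless reindexing $j_{i}\mapsto 3k_{i}-2-j_{i}$ of the full sum (the coefficient $a(j_{i},k_{i})$ and $b_{r}$ being unaffected) --- a relabeling the paper silently absorbs as well.
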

\begin{remark*}
By putting $s=m+1$ in Theorem \ref{th:ItoGfre} and using Theorem \ref{th:IZspv} for the left hand side, we can obtain relations among GMT-type MZVs with $n_{r+1} = 1$.
\end{remark*}
\begin{proof}[Proof of Theorem \ref{th:ItoGfre}]
By lemma \ref{le:Liprod} and the formula (\ref{eq:Uintrep}), for $\Re(s) > 1$, we have
\begin{align*}
&\Gamma(s)\xi_{MT}(\{1\}^{l},k_{1},\dots ,k_{r};s)\\
&=\int_{0}^{\infty} \frac{t^{s+l-1}}{e^{t}-1}  \prod_{i=1}^{r} {\rm Li}_{k_{i}}(1-e^{-t})\,dt \\
&=\sum_{\substack{0 \le j_{1} \le 2k_{1}-2 \\ \vdots \\ 0 \le j_{r} \le 2k_{r}-2 }} \int_{0}^{\infty} \frac{t^{s+l-1}}{e^{t}-1}  \prod_{i=1}^{r} f(t; j_{i}, k_{i})\,dt \\
&=\sum_{\substack{0 \le j_{1} \le 2k_{1}-2 \\ \vdots \\ 0 \le j_{r} \le 2k_{r}-2 }} a_{r}(\mathbf{j},\mathbf{k}) \Gamma(l+b_{r}(\mathbf{j},\mathbf{k})+s) \\
&\quad\times\zeta_{MT}(0,\mathbf{k}(j_{1},k_{1}),\dots,\mathbf{k}(j_{r},k_{r});l+b_{r}(\mathbf{j},\mathbf{k})+s).
\end{align*}
By the analytic continuation, we obtain the stated theorem.
\end{proof}
\subsection{An analog of the Arakawa-Kaneko zeta function of Miyagawa-type}
We generalize Theorem \ref{th:AKZofMfre} by using  Lemma \ref{le:Liprod}.
\begin{theorem}\label{th:AKZofMGfre}
With the assumption of Theorem \ref{th:ItoGfre}, for $l \in \mathbb{Z}_{\ge 0}$, $\mathbf{k}=(k_{1},\dots,k_{n})\in\mathbb{Z}_{\ge2}^{n}$, $k_{n+1} \in \mathbb{Z}_{\ge0}$ and $s \in \mathbb{C}$, we have
\begin{align*}
&(-1)^{k_{n+1}}\xi_{MT}((\{1\}^{l},\mathbf{k};k_{n+1});s) \\
&=\sum_{\substack{0 \le j_{1} \le 2k_{1}-2 \\ \vdots \\ 0 \le j_{n} \le 2k_{n}-2 }} \sum_{c_{1}+\dots+c_{k_{n+1}+1}=l+b_{n}(\mathbf{j},\mathbf{k})}  \bigl(l+b_{n}(\mathbf{j},\mathbf{k})\bigr)! a_{n}(\mathbf{j},\mathbf{k}) \binom{s+c_{k_{n+1}+1}-1}{c_{k_{n+1}+1}} \\
&\quad\times \zeta_{MT}(0,\mathbf{k}(j_{1},k_{1}),\dots,\mathbf{k}(j_{n},k_{n});c_{1}+1,\dots,c_{k_{n+1}}+1,c_{k_{n+1}+1}+s)\\
&\quad-\sum_{i=1}^{k_{n+1}}(-1)^{i-1} \zeta_{MT}(\{1\}^{l},\mathbf{k};i) \zeta(\{1\}^{k_{n+1}-i},s).
\end{align*}
\end{theorem}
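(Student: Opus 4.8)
The plan is to mirror the proof of Theorem \ref{th:AKZofMfre}, but to replace the binomial expansion of $(\zeta(2)-{\rm Li}_2)^l$ there by the decomposition of $\prod_i{\rm Li}_{k_i}$ furnished by Lemma \ref{le:Liprod}. For $\Re(s)>1$ I would introduce the auxiliary integral
\[
J=\int_0^\infty \cdots \int_0^\infty t_{k_{n+1}+1}^{s-1}\,T^{\,l}\prod_{i=1}^n{\rm Li}_{k_i}(1-e^{-T})\prod_{m=1}^{k_{n+1}+1}\frac{1}{e^{t_m+\dots+t_{k_{n+1}+1}}-1}\,dt_1\cdots dt_{k_{n+1}+1},
\]
where $T=t_1+\dots+t_{k_{n+1}+1}$, and evaluate $J$ in two ways. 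The nested denominators encode the semicolon index $k_{n+1}$ exactly as in the integral representation of $\xi_{MT}((\{1\}^l,\mathbf{k};k_{n+1});s)$ derived in the proof of Theorem \ref{th:AKZofMspv}, while placing the power on the single variable $t_{k_{n+1}+1}$ (rather than on $T$) is what keeps $s$ free for the multinomial step below.

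\emph{First evaluation.} I would integrate out $t_1,\dots,t_{k_{n+1}}$ one variable at a time using Corollary \ref{co:liint}. Since ${\rm Li}_1(1-e^{-T})=T$, the factor $T^{\,l}\prod_i{\rm Li}_{k_i}(1-e^{-T})$ is a product of $n+l$ polylogarithms of index $(\{1\}^l,\mathbf{k})$, so each integration in $t_m$ converts the current polylogarithm against $\frac{1}{e^{t_m+\dots}-1}$ into $\zeta_{MT}(\{1\}^l,\mathbf{k};m)-{\rm Li}_{\{1\}^l,\mathbf{k};m}(1-e^{-(t_{m+1}+\dots)})$. The constant parts, paired with the remaining integral in $t_{k_{n+1}+1}$ (which contributes $\Gamma(s)\zeta(\{1\}^{k_{n+1}-i},s)$), produce the boundary terms $(-1)^{i-1}\Gamma(s)\zeta_{MT}(\{1\}^l,\mathbf{k};i)\zeta(\{1\}^{k_{n+1}-i},s)$, the alternating sign accumulating from the minus sign in Corollary \ref{co:liint}. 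After $k_{n+1}$ steps the surviving term is $(-1)^{k_{n+1}}\Gamma(s)\xi_{MT}((\{1\}^l,\mathbf{k};k_{n+1});s)$. Thus $J/\Gamma(s)$ equals the left-hand side of the theorem after the sign rearrangement.

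\emph{Second evaluation.} Here I would substitute Lemma \ref{le:Liprod}, $\prod_i{\rm Li}_{k_i}(1-e^{-T})=\sum_{\mathbf{j}}\prod_i f(T;j_i,k_i)$, and read off Lemma \ref{le:Liint}: each factor supplies the coefficient $a(j_i,k_i)$, a nested $u$-integral whose shape is exactly the block $\mathbf{k}(j_i,k_i)$, and a monomial, namely a factor $T$ when $j_i=k_i-1$ and a factor of one $u$-variable to the first power when $j_i>k_i-1$. Collecting the $T$-monomials with the prefactor $T^{\,l}$ gives $T^{\,l+b_n(\mathbf{j},\mathbf{k})}=(t_1+\dots+t_{k_{n+1}+1})^{\,l+b_n(\mathbf{j},\mathbf{k})}$, which I expand multinomially as $\sum_{c_1+\dots+c_{k_{n+1}+1}=l+b_n}\frac{(l+b_n)!}{c_1!\cdots c_{k_{n+1}+1}!}\,t_1^{c_1}\cdots t_{k_{n+1}+1}^{c_{k_{n+1}+1}}$. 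The resulting integral is then of the form (\ref{eq:Uintrep}): the depth-$(k_{n+1}+1)$ block in $t_1,\dots,t_{k_{n+1}+1}$ becomes the last argument $(c_1+1,\dots,c_{k_{n+1}}+1,c_{k_{n+1}+1}+s)$ — the extra $t_{k_{n+1}+1}^{s-1}$ producing the $s$ in the final slot — while the $u$-blocks reproduce $\mathbf{k}(j_i,k_i)$ and the phantom $0$ block is supplied by (\ref{eq:Uintrep}) for $\zeta_{MT}(0,\dots)$. The normalizing factors $\Gamma(c_m+1)=c_m!$ cancel the multinomial denominators, and $\Gamma(c_{k_{n+1}+1}+s)/(c_{k_{n+1}+1}!\,\Gamma(s))=\binom{s+c_{k_{n+1}+1}-1}{c_{k_{n+1}+1}}$, so $J/\Gamma(s)$ equals exactly the right-hand sum of the theorem.

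Equating the two evaluations gives the identity for $\Re(s)>1$, and it extends to all $s\in\mathbb{C}$ by the meromorphic continuation of the generalized Ito zeta function and of the GMT-type MZF (the latter via the matrix reduction in the remark following Proposition \ref{pr:Uconv}). I expect the second evaluation to be the main obstacle: one must keep the $T$-valued monomials (expandable because $l+b_n$ is an integer) strictly separate from the single-$u$ monomials (which instead promote one block entry from $1$ to $2$), and verify that the nested denominators produced by the $f$-factors, namely $\frac{1}{e^{u_1+\dots+u_m+T}-1}$, align with the tail-plus-entire-last-block denominators demanded by (\ref{eq:Uintrep}). Checking the signs and index shifts in the first evaluation, and justifying Fubini and absolute convergence throughout (using Proposition \ref{pr:Uconv}(ii)), are the remaining routine but necessary verifications.
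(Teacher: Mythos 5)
Your proposal is correct and follows essentially the same route as the paper: the same auxiliary integral $J$, the same first evaluation via repeated application of Corollary \ref{co:liint} to peel off $t_1,\dots,t_{k_{n+1}}$, and the same second evaluation via Lemma \ref{le:Liprod}, the multinomial expansion of $T^{\,l+b_n(\mathbf{j},\mathbf{k})}$, and identification with the GMT-type MZF through (\ref{eq:Uintrep}), finishing by analytic continuation. The subtleties you flag (separating the $T$-monomials from the single-$u$ monomials, and the cancellation $\Gamma(c_m+1)/c_m!$ with $\Gamma(c_q+s)/(c_q!\,\Gamma(s))$ yielding the binomial coefficient) are exactly the points the paper's computation handles.
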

\begin{remark}
By putting $s=m+1$ in Theorem \ref{th:AKZofMGfre} and using Theorem \ref{th:AKZofMspv} for the left hand side, we can obtain relations among GMT-type MZVs.
\end{remark}
\begin{proof}
Let $q=k_{n+1}+1$. For $\Re(s) > 1$, let 
\[J=\int_{0}^{\infty} \cdots \int_{0}^{\infty} t_{q}^{s-1} (t_{1}+\dots+t_{q})^{l} \left(\prod_{i=1}^{n}{\rm Li}_{k_{i}}(1 - e^{-(t_{1}+\dots+t_{q})})\right) \prod_{i=1}^{q}\frac{dt_{i}}{e^{t_{i}+\dots+t_{q}}-1}.\]
We calculate $J$ in two different ways.
By using Corollary \ref{co:liint}, we have
\begin{align*}
J&=\int_{0}^{\infty} \cdots \int_{0}^{\infty} t_{q}^{s-1} \Bigl(\zeta_{MT}(\{1\}^{l},\mathbf{k};1) - {\rm Li}_{\{1\}^{l},\mathbf{k};1}(1 - e^{-(t_{2}+\dots+t_{q})})\Bigr) \\
&\quad\times \frac{1}{e^{t_{2}+\dots+t_{q}}-1}  \cdots \frac{1}{e^{t_{q}}-1} \,dt_{2}\cdots dt_{q} \\
&=\Gamma(s) \zeta_{MT}(\{1\}^{l},\mathbf{k};1) \zeta(\{1\}^{q-2},s) \\
&\quad-\int_{0}^{\infty} \cdots \int_{0}^{\infty} t_{q}^{s-1} {\rm Li}_{\{1\}^{l},\mathbf{k};1}(1 - e^{-(t_{2}+\dots+t_{q})})\\
&\quad\times \frac{1}{e^{t_{2}+\dots+t_{q}}-1}  \cdots   \frac{1}{e^{t_{q}}-1} \,dt_{2}\cdots dt_{q} \\
&=\cdots \\
&=\Gamma(s) \sum_{i=1}^{q-1}(-1)^{i-1} \zeta_{MT}(\{1\}^{l},\mathbf{k};i) \zeta(\{1\}^{q-1-i},s) \\
&\quad+(-1)^{q-1} \Gamma(s) \xi_{MT}((\{1\}^{l},\mathbf{k};q-1);s)\, .
\end{align*}

On the other hand, by using Lemma \ref{le:Liprod}, we have

\begin{align*}
J&=\sum_{\substack{0 \le j_{1} \le 2k_{1}-2 \\ \vdots \\ 0 \le j_{n} \le 2k_{n}-2 }} \int_{0}^{\infty} \cdots \int_{0}^{\infty} t_{q}^{s-1} ( t_{1}+\dots+t_{q} )^{l}\\
&\quad\times\left(\prod_{i=1}^{n} f(t_{1}+\dots+t_{q}; j_{i}, k_{i})\right) \prod_{i=1}^{q}\frac{dt_{i}}{e^{t_{i}+\dots+t_{q}}-1}\\
&=\sum_{\substack{0 \le j_{1} \le 2k_{1}-2 \\ \vdots \\ 0 \le j_{n} \le 2k_{n}-2 }} \int_{0}^{\infty} \cdots \int_{0}^{\infty} t_{q}^{s-1} ( t_{1}+\dots+t_{q} )^{l+b_{n}(\mathbf{j},\mathbf{k})} \\
&\quad\times \left(\prod_{i=1}^{n} \frac{f(t_{1}+\dots+t_{q}; j_{i}, k_{i})}{( t_{1}+\dots+t_{q} )^{b_{n}(\mathbf{j},\mathbf{k})}}\right) \prod_{i=1}^{q}\frac{dt_{i}}{e^{t_{i}+\dots+t_{q}}-1}\\
&=\sum_{\substack{0 \le j_{1} \le 2k_{1}-2 \\ \vdots \\ 0 \le j_{n} \le 2k_{n}-2 }} \sum_{c_{1}+\dots+c_{q}=l+b_{n}(\mathbf{j},\mathbf{k})}  \frac{\bigl(l+b_{n}(\mathbf{j},\mathbf{k})\bigr)!}{c_{1}!\cdots c_{q}!} \int_{0}^{\infty} \cdots \int_{0}^{\infty} t_{q}^{s-1} t_{1}^{c_{1}} \cdots t_{q}^{c_{q}} \\
&\quad\times \left(\prod_{i=1}^{n} \frac{f(t_{1}+\dots+t_{q}; j_{i}, k_{i})}{( t_{1}+\dots+t_{q} )^{b_{n}(\mathbf{j},\mathbf{k})}}\right) \prod_{i=1}^{q}\frac{dt_{i}}{e^{t_{i}+\dots+t_{q}}-1}\\
&=\sum_{\substack{0 \le j_{1} \le 2k_{1}-2 \\ \vdots \\ 0 \le j_{n} \le 2k_{n}-2 }} \sum_{c_{1}+\dots+c_{q}=l+b_{n}(\mathbf{j},\mathbf{k})}  \frac{\bigl(l+b_{n}(\mathbf{j},\mathbf{k})\bigr)!}{c_{1}!\cdots c_{q}!} a_{n}(\mathbf{j},\mathbf{k})\left(\prod_{i=1}^{q-1} \Gamma(c_{i}+1) \right)\\
&\quad\times\Gamma(c_{q}+s) \zeta_{MT}(0,\mathbf{k}(j_{1},k_{1}),\dots,\mathbf{k}(j_{n},k_{n});c_{1}+1,\dots,c_{q-1}+1,c_{q}+s)\\
&=\sum_{\substack{0 \le j_{1} \le 2k_{1}-2 \\ \vdots \\ 0 \le j_{n} \le 2k_{n}-2 }} \sum_{c_{1}+\dots+c_{q}=l+b_{n}(\mathbf{j},\mathbf{k})}  \frac{\bigl(l+b_{n}(\mathbf{j},\mathbf{k})\bigr)!}{c_{q}!} a_{n}(\mathbf{j},\mathbf{k}) \Gamma(c_{q}+s) \\
&\quad\times \zeta_{MT}(0,\mathbf{k}(j_{1},k_{1}),\dots,\mathbf{k}(j_{n},k_{n});c_{1}+1,\dots,c_{q-1}+1,c_{q}+s).
\end{align*}
By the analytic continuation, we obtain the stated theorem.
\end{proof}
\section*{Acknowledgment}
The author is deeply grateful to Prof. Kohji Matsumoto, Mr. Tomohiro Ikkai, Mr. Yuta Suzuki, Mr. Kenta Endo for their helpful comments.

\vspace{1mm}

\vspace{4mm}

{\footnotesize
{\sc
\noindent
Graduate School of Mathmatics, Nagoya University,\\
Chikusa-ku, Nagoya 464-8602, Japan.
}\\
{\it E-mail address}, R. Umezawa\hspace{1.75mm}: {\tt
m15016w@math.nagoya-u.ac.jp}\\
}

\begin{thebibliography}{99}
	\bibitem{AkiyamaEgamiTanigawa} S. Akiyama, S. Egami and Y. Tanigawa, \textit{Analytic continuation of multiple zeta-functions and their values at non-positive integers,} Acta Arith., $\mathbf{98}$ (2001), 107--116.
	\bibitem{ArakawaKaneko} T. Arakawa and M. Kaneko, \textit{Multiple zeta values, poly-Bernoulli numbers, and related zeta functions,} Nagoya Math. J. $\mathbf{153}$ (1999), 189--209.
	\bibitem{Ito} T. Ito, \textit{On analogues of the Arakawa-Kaneko zeta functions of Mordell-Tornheim type,} Comment.  Math. Univ. St. Pauli $\mathbf{65}$ (2016), 111--120.
	\bibitem{KanekoTsumura} M. Kaneko and H. Tsumura, \textit{Multi-poly-Bernoulli numbers and related zeta functions,} Nagoya Math. J, to appear.
	\bibitem{Matsumoto} K. Matsumoto, \textit{On Mordell-Tornheim and other multiple zeta-functions,} in Proceedings of the Session in Analytic Number Theory and Diophantine Equations, D. R. Heath-Brown and B. Z. Moroz (eds.), Bonner Math. Schriften $\mathbf{360}$, Bonn, 2003, n.25, 17pp.
	\bibitem{Matsumoto2} K. Matsumoto, \textit{On the analytic continuation of various multiple-zeta functions,} in Number Theory for the Millennium II, Proc. Millennial Conf. on Number Theory, M. A. Bennett et al. (eds.), A K Peters, 2002, pp.417--440.
	\bibitem{Miyagawa} T. Miyagawa, \textit{Analytic properties of generalized Mordell-Tornheim type of multiple zeta-function and L-function,} Tsukuba J. Math. $\mathbf{40}$ (2016), 81--100.
	\bibitem{OkamotoOnozuka} T. Okamoto and T. Onozuka, \textit{Functional equation for the Mordell-Tornheim multiple zeta-function,} Funct. Approx. Comment. Math. $\mathbf{55}$ (2016), no. 2, 227--241.
	\bibitem{Yamamoto} S. Yamamoto, \textit{Multiple zeta-star values and multiple integrals,} \texttt{arXiv:14056499}.
\end{thebibliography}
\end{document}